\documentclass[a4paper,11pt]{article}
\usepackage{lmodern} 
\usepackage[T1]{fontenc} 
\usepackage[english]{babel} 
\usepackage[justification=centering]{caption, subcaption} 
\usepackage{graphicx}
\usepackage{bbm}
\usepackage{array}
\usepackage{hyperref}
\usepackage{xcolor}
\usepackage{amsthm}
\usepackage{stmaryrd}
\usepackage{enumerate}
\usepackage[a4paper,top=2.54cm,bottom=2.54cm,left=2.54cm,right=2.54cm,marginparwidth=1.75cm]{geometry}

\newtheorem{theorem}{Theorem}[section]
\newtheorem{corollary}[theorem]{Corollary}
\newtheorem{lemma}[theorem]{Lemma}
\newtheorem{definition}[theorem]{Definition}
\newtheorem{proposition}[theorem]{Proposition}

\theoremstyle{definition}
\newtheorem{remark}[theorem]{Remark}

\usepackage{amssymb}
\usepackage{latexsym,amsmath,amssymb}

\newcommand{\eps}{\varepsilon}
\newcommand{\ind}{\mathbbm{1}}
\renewcommand{\P}{\mathbb{P}}
\renewcommand{\ge}{\geqslant}
\newcommand{\E}{\mathbb{E}}
\newcommand{\R}{\mathbb{R}}
\newcommand{\Z}{\mathbb{Z}}

\newcommand{\Hor}{\mathrm{Hor}_{x,y}^{(n)}}
\newcommand{\LVer}{\mathrm{LVer}_{x,y}^{(n)}}
\newcommand{\RVer}{\mathrm{RVer}_{x,y}^{(n)}}

\DeclareMathOperator{\br}{br}

\author{Diana De Armas Bellon\thanks{Department of Mathematical Sciences, University of Bath, Bath BA2 7AY, UK. Email: \texttt{ddabda20@bath.ac.uk}}, Matthew I.~Roberts\thanks{Department of Mathematical Sciences, University of Bath, Bath BA2 7AY, UK. Email: \texttt{mattiroberts@gmail.com}}}

\title{Accessibility Percolation with Rough Mount Fuji labels}

\begin{document}

\flushbottom
\maketitle

\begin{abstract}
Consider an infinite, rooted, connected graph where each vertex is labelled with an independent and identically distributed Uniform(0,1) random variable, plus a parameter $\theta$ times its distance from the root $\rho$. That is, we label vertex $v$ with $X_v = U_v + \theta d(\rho,v)$. We say that \emph{accessibility percolation} occurs if there is an infinite path started from $\rho$ along which the vertex labels are increasing.

When the graph is a Bienaym\'e-Galton-Watson tree, we give an exact characterisation of the critical value $\theta_c$ such that there is accessibility percolation with positive probability if and only if $\theta>\theta_c$. We also give more explicit bounds on the value of $\theta_c$. The lower bound holds for a much more general class of trees.

When the graph is the lattice $\mathbb{Z}^n$ for $n\ge 2$, we show that there is a non-trivial phase transition and give some first bounds on $\theta_c$. To do this we introduce a novel coupling with oriented percolation.
\end{abstract}

\section{Introduction}
Accessibility percolation is a probabilistic model motivated by evolutionary biology. We assign a random fitness value to each vertex of a graph, and a path is said to be \emph{accessible} if the fitness values along it are strictly increasing. \emph{Accessibility percolation} occurs when there exists an accessible path from a distinguished vertex to infinity, or, in the case of a finite graph, to a designated target set of vertices.

Formally, let $G=(V,E)$ be a locally finite, connected graph with a specified root $\rho$. To each vertex $v\in V$, attach a label $X_v$. We consider simple paths starting from $\rho$ along which the labels are increasing; we call such paths \emph{increasing} or \emph{accessible}. The model of \emph{accessibility percolation} on $G$ retains only those vertices $v$ that can be reached from $\rho$ by an accessible path. We say that accessibility percolation occurs if there exists an infinite accessible path, or for finite graphs, an accessible path from $\rho$ to some prescribed target set.

The simplest and most extensively studied setting is the case where the labels $\{X_v\}_{v\neq \rho}$ are independent and identically distributed. This is known as the \emph{House of Cards} model, introduced by Nowak and Krug \cite{nowak2013accessibility}, with a biological motivation \cite{aita2000analysis, duque2023rmf, hegarty2014existence} which we will detail later. On regular trees, this model was studied by Roberts and Zhuo Zhao \cite{roberts_zhao:increasing_paths_regular_trees} and by Chen \cite{chen2014increasing}. Berestycki, Brunet and Shi \cite{berestycki2016nobacksteps, berestycki2017backsteps}, considered the House of Cards model on the discrete hypercube, as well as on an inhomogeneous tree designed to mimic the geometry of the hypercube. Closely related questions have also emerged in the study of \emph{temporal networks}, motivated by computer science, where random labels are typically assigned to the edges rather than the vertices. In that setting, the Erd\H{o}s-R\'enyi graph is the most commonly studied underlying graph; see \cite{angel2020long, broutin2024increasing, casteigts2024sharp}. A recurring feature across all the works mentioned above is that the length of the longest accessible path is of the same order as the degree of a typical vertex.

In this paper, we consider a different model for the labels, called the Rough Mount Fuji (RMF) model, which was also proposed in an evolutionary context as a more realistic fitness landscape than the House of Cards model \cite{aita2000analysis, nowak2013accessibility}.

\begin{definition}[Rough Mount Fuji]
\label{RMF}
     Fix $\theta\in[0,1]$, and let $\{U_v : v\in V\}$ be independent Uniform$(0,1)$ random variables. For each $v\in V$, define
     \begin{equation}\label{eq:RMF}
     X_v= U_v + \theta d(\rho, v)
     \end{equation}
     where $d(\rho,v)$ is the distance from the root $\rho$ to vertex $v$.
\end{definition}

Unless otherwise specified, $d$ will denote the graph distance, though other choices of metric are possible. In particular, later in the paper we consider the case when $d$ is the $\ell^p$ distance on the integer lattice $\Z^n$. One may also restrict the class of admissible paths: for example, \cite{berestycki2016nobacksteps} considers only non-backtracking paths, whereas \cite{berestycki2017backsteps} allows backtracking. On trees, which are the focus of the first part of this paper, all simple paths are non-backtracking and therefore this issue does not arise.

Although other distributions for the random variables $U_v$ in Definition \ref{RMF} can be considered (see, for example, \cite{aita2000analysis}), we restrict attention here to the uniform distribution in order to keep notation and calculations relatively concise. Extending our results to more general distributions is an interesting direction which we intend to consider in future work.

We observe that when $\theta=0$, we recover the House of Cards model. As $\theta$ increases, the deterministic drift term $\theta d(\rho,v)$ makes it progressively easier for labels along paths emanating from the root to be increasing, long increasing paths could exist even on graphs of uniformly bounded degree. For any infinite, locally finite, connected graph $G$, monotonicity implies the existence of a critical value $\theta_c$ such that there is almost surely no infinite accessible path for $\theta<\theta_c$, whereas such paths do exist with strictly positive probability for $\theta>\theta_c$.

Our main contributions are as follows. We give a precise characterisation of the critical parameter $\theta_c$ when $G$ is a Bienaym\'e-Galton-Watson tree, and show that percolation does not occur at the critical point. We also provide explicit bounds on $\theta_c$; in particular, the lower bound holds for any infinite, locally-finite rooted tree. Finally, we show that $\theta_c$ is non-trivial when $G=\mathbb{Z}^n$. When the distance $d$ in \eqref{eq:RMF} is the graph distance, this follows from a coupling with oriented percolation, essentially as in \cite{duque2023rmf, hegarty2014existence}. However, when $d$ is the $\ell^p$ distance for $p>1$, including the Euclidean case $p=2$, a much more delicate coupling argument is required.

\begin{figure}[h]
  \centering
  \begin{minipage}[b]{0.3\textwidth}
    \includegraphics[width=\textwidth]{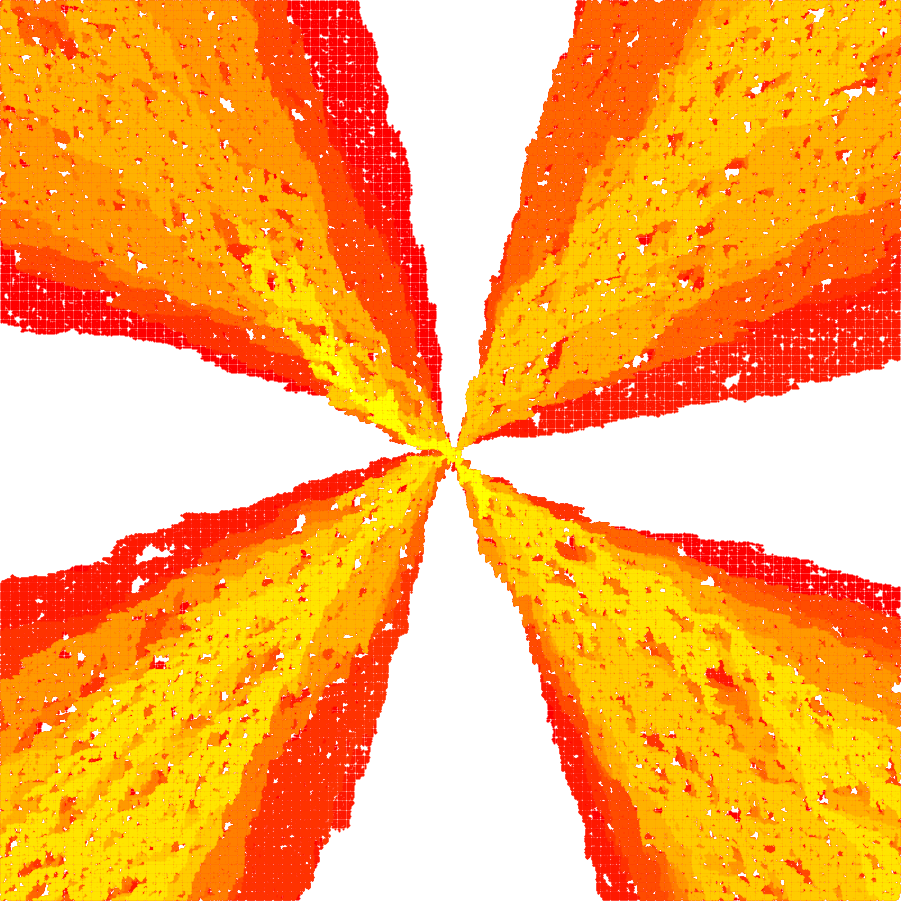}
  \end{minipage}
  \hfill
  \begin{minipage}[b]{0.3\textwidth}
    \includegraphics[width=\textwidth]{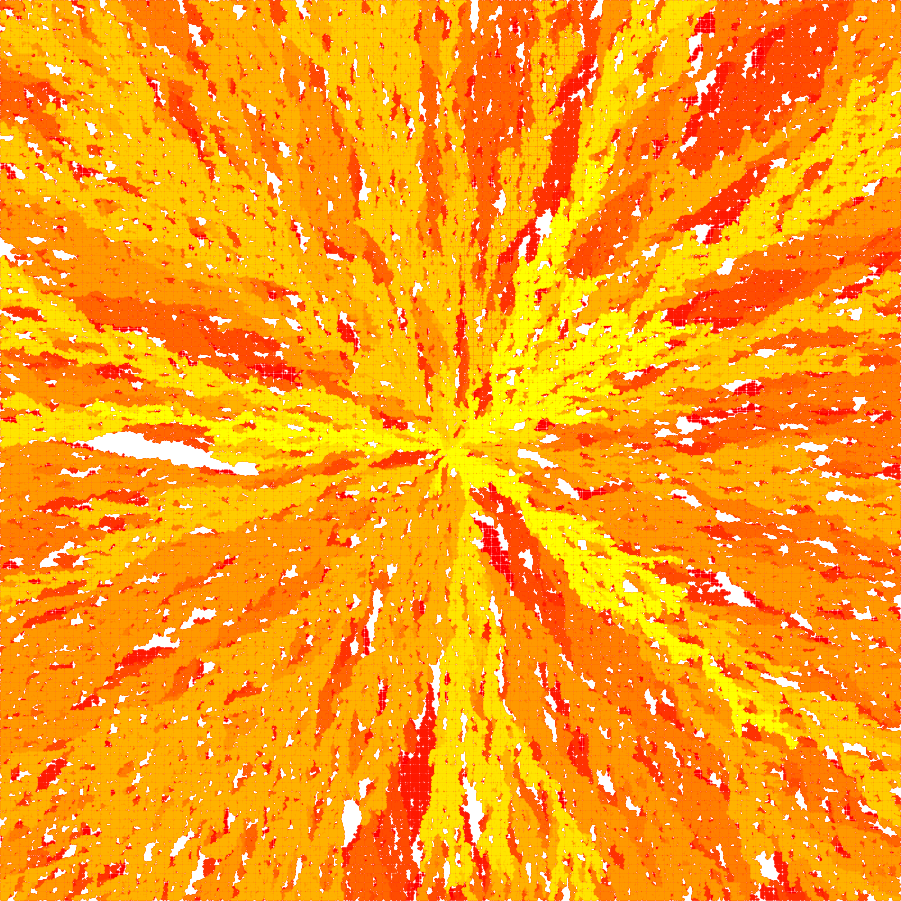}
  \end{minipage}
  \hfill
    \begin{minipage}[b]{0.3\textwidth}
    \includegraphics[width=\textwidth]{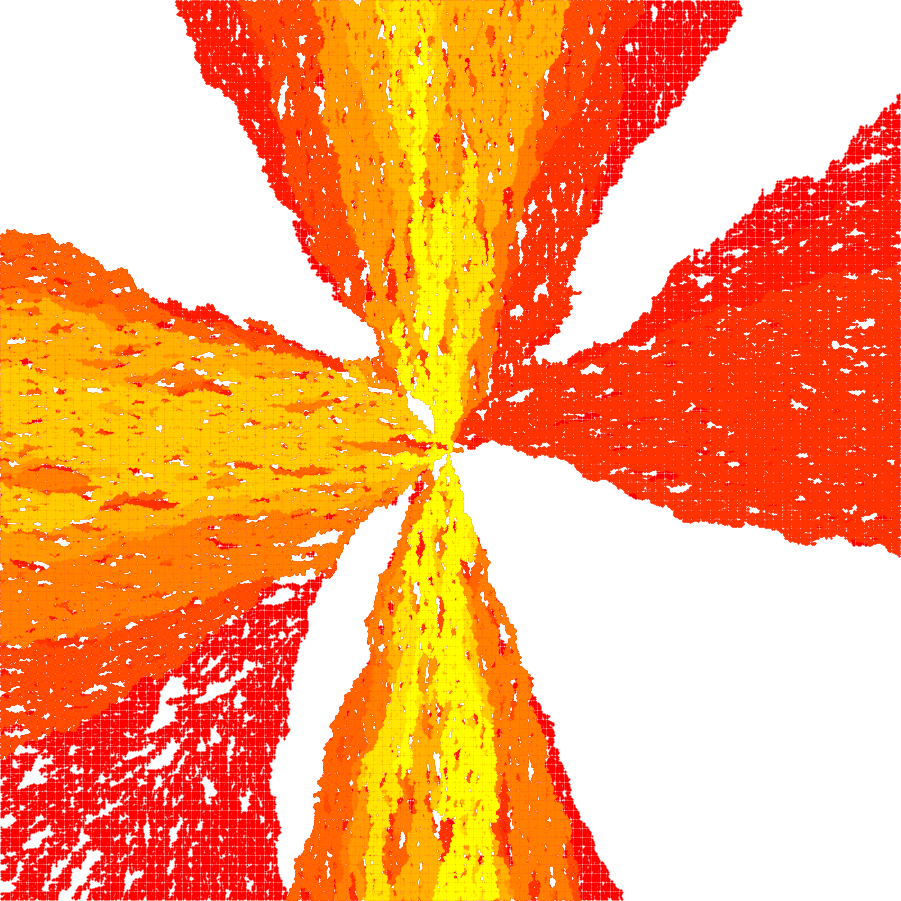}
  \end{minipage}
  \caption{Accessible paths in $\Z^n$ with backsteps. The left-hand image uses the $\ell^1$ distance, and shows several values of $\theta$, from $\theta=0.33$ (yellow) to $0.42$ (red). The middle image uses the $\ell^2$ distance, and shows $\theta=0.45$ (yellow) to $0.54$ (red).The right-hand image uses the $\ell^4$ distance, and shows $\theta=0.48$ (yellow) to $0.57$ (red). With the $\ell^1$ distance, paths near the diagonal are preferred. With the $\ell^4$ distance, paths near the axes are preferred. With the $\ell^2$ distance, there does not appear to be a strong directional preference.}\label{fig:latticebacksteps}
\end{figure}

\begin{figure}[h]
  \centering
  \begin{minipage}[b]{0.3\textwidth}
    \includegraphics[width=\textwidth]{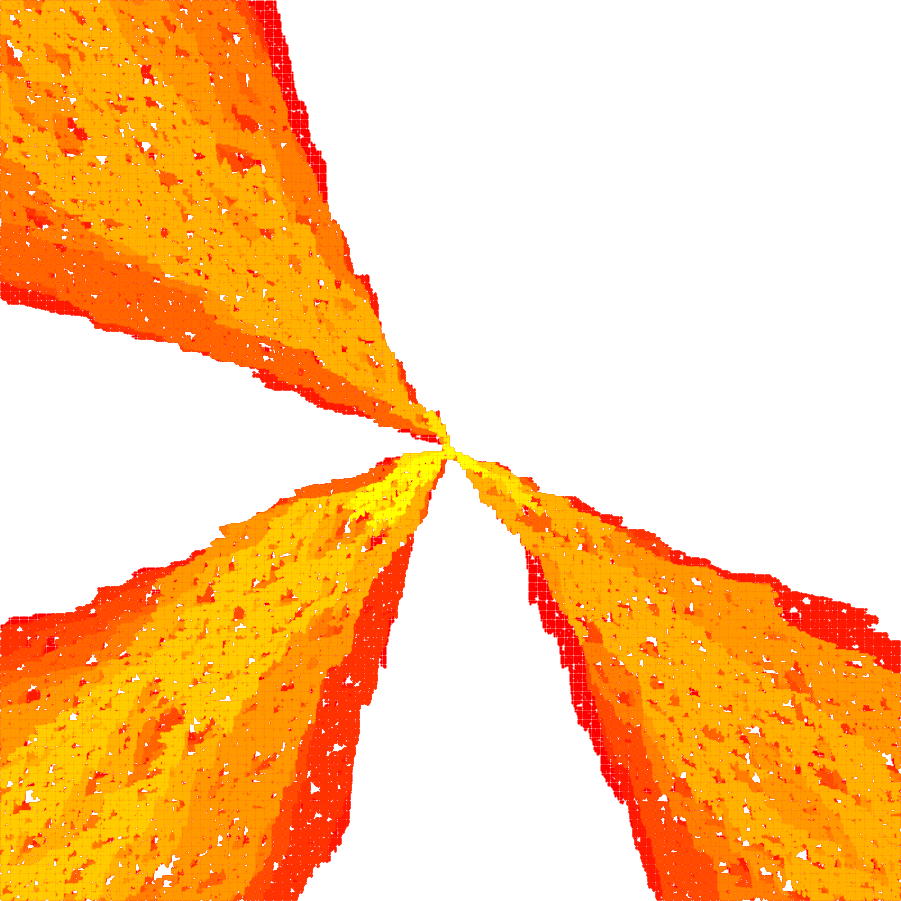}
  \end{minipage}
  \hfill
  \begin{minipage}[b]{0.3\textwidth}
    \includegraphics[width=\textwidth]{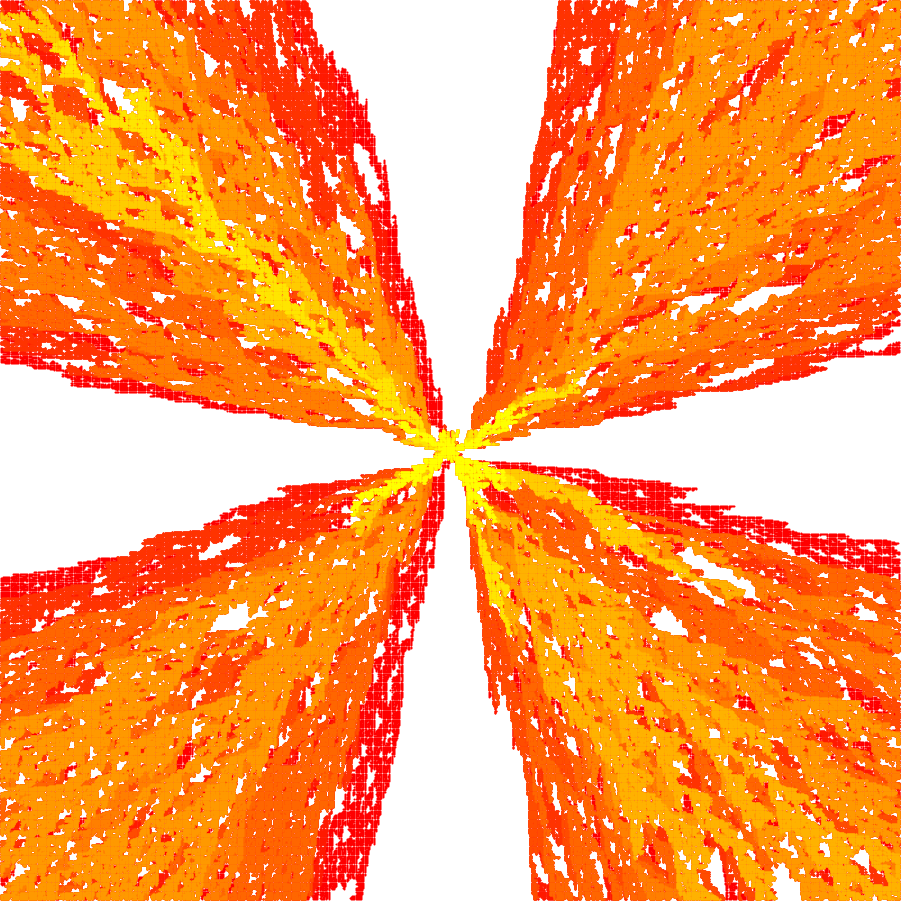}
  \end{minipage}
  \hfill
    \begin{minipage}[b]{0.3\textwidth}
    \includegraphics[width=\textwidth]{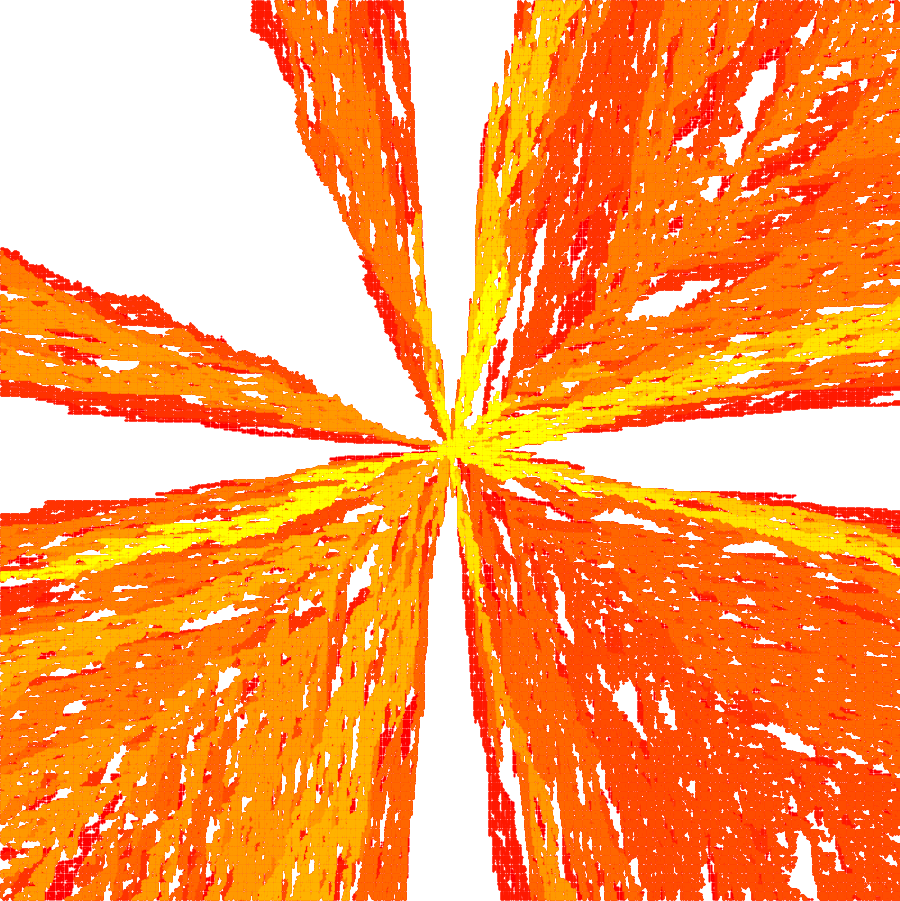}
  \end{minipage}
  \caption{Accessible paths in $\Z^n$ without backsteps. The left-hand image uses the $\ell^1$ distance, and shows several values of $\theta$, from $\theta=0.33$ (yellow) to $0.42$ (red). The middle image uses the $\ell^2$ distance, and shows $\theta=0.45$ (yellow) to $0.54$ (red). The right-hand image uses the $\ell^4$ distance, and shows $\theta=0.53$ (yellow) to $0.62$ (red). In all cases, paths near the axes are inaccessible, but as $q$  
  increases, paths near the diagonal become more difficult too.}\label{fig:latticenobacksteps}
\end{figure}

\subsection{Results on trees}

Consider a rooted, infinite, locally-finite tree $T$, and attach a label $X_v$ to each vertex $v$ according to the Rough Mount Fuji model \eqref{RMF}.

In the case when $T$ is a Bienaym\'e-Galton-Watson tree whose offspring distribution satisfies an $L\log L$ condition, we are able to give an exact characterisation of the critical RMF accessibility percolation threshold.

\begin{theorem}
\label{theomultitype}
For $\theta\in(0,1]$ and $x\in\R$, define
\[Q_\theta(x) = \sum_{j=0}^{\lfloor 1/\theta\rfloor + 1} \frac{(-x)^j}{j!}(1-(j-1)\theta)^j.\]
Let $m_c(\theta)$ be the minimal root of the polynomial equation
\begin{equation}\label{eq:criticalpoly}
    Q_\theta(m) = 0.
\end{equation}
Suppose that $T$ is a Bienaym\'e-Galton-Watson tree whose offspring distribution $L$ satisfies $\E[L]=m$ and $\E[L\log_+ L] < \infty$. There is Rough Mount Fuji accessibility percolation with positive probability on $T$ if and only if $m > m_c(\theta)$.
\end{theorem}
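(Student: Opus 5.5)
The plan is to reduce accessibility percolation on the BGW tree to survival of a branching process whose type is the label offset, and then to identify criticality through the spectral radius of the resulting mean operator.

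\textbf{Branching process of accessible vertices.} Keep only those vertices of $T$ that are reachable from $\rho$ by an accessible path. This gives a branching process in which the type of a vertex $v$ at depth $n$ is $U_v\in(0,1)$. A child $w$ of $v$ is retained exactly when $U_w+\theta(n+1)>U_v+\theta n$, that is, when $U_w>U_v-\theta$. Given a parent of type $u$, the retained children therefore form, in mean, a population with intensity $m\,\ind_{\{w>u-\theta\}}\,dw$ on $(0,1)$. So the accessible subtree is a multitype BGW process with type space $[0,1]$ and mean kernel
\[
K f(u)=m\int_{(u-\theta)_+}^1 f(w)\,dw .
\]
The offspring of a type-$u$ vertex are obtained by thinning $L$ with probability $1-(u-\theta)_+$. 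The $L\log L$ hypothesis on $L$ carries over to this process.

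\textbf{Survival criterion.} For a positive compact operator (this kernel is bounded on a compact type space, so $K$ is compact on $L^2[0,1]$ or on $C[0,1]$), the general theory of multitype branching processes gives the following. Under the $L\log L$ condition, survival has positive probability iff the Perron root $r(K)>1$, and there is a.s.\ extinction at $r(K)=1$. Irreducibility needs a little care: from any type one can reach types near $1$, and from these every type $w>1-\theta$ can be reached. For the upper types one can therefore restrict to the set where $K$ is primitive, or argue via $K^N$ having a strictly positive kernel once $N\theta>1$. For the extinction direction I would simply use a first-moment bound, $\E[\#\text{generation }n]\le \|K^n\|\to0$ when $r<1$. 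At $r=1$ I would invoke the Harris/Moyal critical result for positively regular processes. The $L\log L$ condition is what is needed on the survival side, via the Kesten--Stigum type martingale $W_n=\sum_v h(U_v)$, where $h$ is the Perron eigenfunction.

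\textbf{Computing the Perron root.} Since $r(K)$ is linear in $m$, $m_c$ is the value of $m$ at which $r(K)=1$, that is, at which $Kh=h$ has a positive solution. Differentiating gives $h'(u)=-m\,h(u-\theta)$ for $u>\theta$ and $h'(u)=0$ for $u<\theta$. This is a delay differential equation with $h$ constant on $[0,\theta]$, say $h=1$ there. Solving step by step, the standard solution is
\[
h(u)=\sum_{j\ge0}\frac{(-m)^j (u-(j-1)\theta)_+^j}{j!}.
\]
One checks the first pieces directly: $h(u)=1-m(u-\theta)+\dots$, and each term differentiates into the previous one shifted by $\theta$. The boundary condition comes from $h(u)=K h(u)$ at $u\le\theta$, namely $1=m\int_0^1 h$, which is equivalent to $h(1)=0$. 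Indeed, integrating the ODE gives $m\int_0^{1}h = m\int_0^{1-\theta}h+m\int_{1-\theta}^1 h$, and this telescopes to $h(1)=h(\theta)-m\int\dots$. Evaluating at $u=1$, the terms with $(j-1)\theta<1$ are exactly those with $j\le\lfloor1/\theta\rfloor+1$, so $h(1)=Q_\theta(m)$. Hence $1$ is an eigenvalue of $K$ exactly when $Q_\theta(m)=0$. The remaining point is that this eigenfunction is positive exactly at the smallest root. For $m$ below the first zero, $h$ stays positive on $[0,1)$ and $Q_\theta(m)>0$; comparison with $r(K_m)=m\,r(K_1)$ shows that $r(K)=1$ precisely at $m=m_c(\theta)$. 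At larger roots the eigenfunction changes sign, so these do not give the Perron root.

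\textbf{Main obstacle.} I expect the hardest step to be justifying that the minimal root corresponds to the Perron eigenvalue. This needs positivity of $h$ on $[0,1)$ for $m\le m_c$; I would try a Sturm-type argument on the delay equation, monotonicity in $m$ of the first zero of $h$, and Krein--Rutman. A second delicate point is verifying the hypotheses of the uncountable-type branching theorem, in particular at criticality, where extinction must still be shown.
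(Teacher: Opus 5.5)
Your architecture matches the paper's: the mean kernel $Kf(u)=m\int_{(u-\theta)\vee 0}^1 f$, the delay equation, linearity of the eigenvalue in $m$, Krein--Rutman, and an additive martingale. However, the spectral computation at its centre is wrong, and the error hides exactly the property the proof depends on. The function you display, $\sum_j (-m)^j(u-(j-1)\theta)_+^j/j!=1-mu+\dots$, is not constant on $[0,\theta]$ and is not an eigenfunction. It equals $1-m\int_0^u f$, i.e.\ the shift $f(\cdot+\theta)$ of the piecewise-polynomial continuation of $f$, where $f(u)=\sum_j(-m)^j(u-j\theta)_+^j/j!$ is the actual $\lambda=1$ solution. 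Hence your claim that the normalisation $m\int_0^1 f=1$ is equivalent to $f(1)=0$ is false. Evaluating $f=Kf$ at $u=1$ gives $f(1)=m\int_{1-\theta}^1 f$, which is strictly positive for a nonnegative nonzero eigenfunction. For instance, when $\theta\in(1/2,1)$ one has $f(1)=1-m(1-\theta)$, while $Q_\theta(m)=1-m+\tfrac{m^2}{2}(1-\theta)^2$. The correct identity, which the paper computes, is $1-m\int_0^1 f=Q_\theta(m)$. So your conclusion ``eigenvalue $1$ iff $Q_\theta(m)=0$'' survives, but your picture of a critical eigenfunction positive on $[0,1)$ and vanishing at $1$ describes the wrong function.

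This matters because the proof needs the reverse: the Perron eigenfunction is decreasing with $f(1)>0$ (Lemma~\ref{lemmaf1positive}), so $f(1)\le f\le 1$. That two-sided bound does three jobs:
\begin{itemize}
\item it gives the type-uniform domination $W_1/W_0\le|\mathcal N_1^+|/(\lambda f(1))$, which is what lets $\E[L\log_+L]<\infty$ suffice in the Biggins--Kyprianou theorem;
\item it gives $W_n\ge f(1)\lambda^{-n}N_n$ for the subcritical Markov bound;
\item at $\lambda=1$ it gives $\limsup N_n\le W_\infty/f(1)<\infty$, after which a lower bound, uniform in the parent's type, on dying out within two generations forces $N_n\to0$.
\end{itemize}
That direct argument replaces your appeal to a general Harris/Moyal criticality theorem for continuum types. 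Otherwise you would have to verify that theorem's hypotheses under only an $L\log L$ assumption.

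Conversely, the step you flag as the main obstacle needs no Sturm-type analysis. The eigenpair classification holds in both directions, and $f_{m,\theta,\lambda}=f_{m/\lambda,\theta,1}$. So the nonzero eigenvalues of $K_m$ are exactly $m/x$ with $Q_\theta(x)=0$, and all real roots are positive since $Q_\theta\ge1$ on $(-\infty,0]$. Krein--Rutman makes the spectral radius an eigenvalue with a nonnegative eigenfunction. It is therefore the largest real eigenvalue, namely $m/m_c(\theta)$, and strict positivity then follows from the integral equation.
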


\begin{remark}
In Theorem \ref{theomultitype} we specify a critical parameter $m_c(\theta)$ in terms of $\theta$. However, it is easy to see that $m_c(\theta)$ is strictly decreasing as a function of $\theta$, with $m_c(1)=1$ and $\lim_{\theta\to 0}m_c(\theta)=\infty$ (for the latter, note that $Q_\theta(x)\to e^{-x}$ uniformly on compacts as $\theta\to 0$); and therefore we may also define $\theta_c(m)$ for $m\ge 1$ implicitly as the unique value of $\theta$ such that $m_c(\theta)=m$.
\end{remark}

\noindent
One can see numerically that $m\theta_c(m)\to 1/e$ as $m\to\infty$, or equivalently that $\theta m_c(\theta)\to 1/e$ as $\theta\to 0$. In fact, we are able to give an explicit lower bound on $\theta_c$ that matches this $1/e$ asymptotic, and which holds for much more general trees, not only Bienaym\'e-Galton-Watson trees. In order to state this we must recall the definition of the branching number $\br T$ of a tree $T$.

A detailed discussion of the branching number can be found in \cite{lyons_peres:probability_on_trees}. The formulation that will be most convenient for our purposes requires the concept of a cutset.

\begin{definition}\label{definitionofcutset}
We say that $\Pi\subset V\setminus\{\rho\}$ is a \emph{cutset} separating $\rho$ and $\infty$ if every infinite (simple) path from $\rho$ must include at least one vertex from $\Pi$.    
\end{definition}

In fact the definition of cutset given in \cite{lyons_peres:probability_on_trees} uses edges instead of vertices; but since edges and non-root vertices are in bijection for trees, there is no difference for our purposes.

\begin{definition}
    \label{definitionofbranchingnumber}
Suppose that $G=T$, a rooted infinite, locally finite tree. The \emph{branching number} of $T$ is
\[ \operatorname{br} T=\sup \left\{\lambda : \inf _{\Pi} \sum_{v \in \Pi} \lambda^{-d(\rho,v)}>0\right\}\]
where $d(\rho,v)$ denotes the graph distance from the root $\rho$ to the vertex $v$, and the infimum is taken over cutsets $\Pi$ that separate $\rho$ from infinity.
\end{definition}

This is not the standard definition of branching number, but it is equivalent by the max-flow min-cut theorem \cite{lyons_peres:probability_on_trees}, and this formulation will be the one that is most useful for us. We can now state our lower bound on $\theta_c$ for general trees.

\begin{theorem}\label{thm:no_perco_small_theta}
For any rooted, infinite, locally-finite tree $T$ and any $\theta<\frac{1}{e\operatorname{br}T}$,
\[\P(\text{there exists an infinite increasing path}) = 0.\]
In particular, $\theta_c(T) \ge \frac{1}{e\br T}$.
\end{theorem}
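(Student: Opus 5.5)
Since $\theta<\frac{1}{e\br T}$, I will choose $\lambda$ with $\br T<\lambda<\frac{1}{e\theta}$. By Definition \ref{definitionofbranchingnumber}, for every $\eps>0$ there is a cutset $\Pi$ with $\sum_{v\in\Pi}\lambda^{-d(\rho,v)}<\eps$. Any infinite increasing path from $\rho$ passes through some $v\in\Pi$, and its initial segment from $\rho$ to $v$ is then increasing. A union bound therefore gives
\[\P(\exists \text{ infinite increasing path})\le \sum_{v\in\Pi}\P(\text{the geodesic } \rho\to v \text{ is increasing}) = \sum_{v\in\Pi} p_{d(\rho,v)},\]
where $p_k$ is the probability that $U_0<U_1+\theta<U_2+2\theta<\dots<U_k+k\theta$ for i.i.d.\ uniforms. (Here $U_0=U_\rho$; if the root is not labelled, drop that constraint, which changes $p_k$ by at most a factor $e$-independent constant and does not matter.) It then suffices to show $p_k\le C\lambda^{-k}$ for all $k$. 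Then the probability is at most $C\eps$, and letting $\eps\to0$ finishes the proof.

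The main step is the estimate $p_k\le C(e\theta)^k k^{a}$, or any bound of the form $p_k\le C_\delta((1+\delta)e\theta)^k$. Set $Y_i=U_i+i\theta$. The event requires $U_{i+1}>U_i-\theta$ for each $i$. I will compute $p_k$ by integrating out the variables sequentially. Let $f_k(u)$ be the density-weighted probability that the path is increasing up to step $k$ with $U_k=u$, so that $f_{k+1}(u)=\int_0^{\min(1,u+\theta)} f_k(s)\,ds$. Inductively, $f_k$ is a piecewise polynomial. The cleanest route is to bound $f_k(u)\le g_k(u)$, where $g_k$ solves the same recursion without the cap at $1$ on $[0,1]$ extended to the real line. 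Alternatively, I will use the exact formula: the volume of $\{u\in[0,1]^{k+1}: u_{i+1}>u_i-\theta\}$ is, by a classical ballot or Takács-type computation, $\sum_j$ of terms like $\frac{(-1)^j}{j!}(\cdots)$, which is essentially the coefficient structure of $Q_\theta$ in Theorem \ref{theomultitype}. A simpler self-contained bound uses the spacings. The increasing condition forces the sorted structure: if each downward step has size less than $\theta$, then over $k$ steps the uniforms must "wind" so that descents are short. I will use the generating-function bound $p_k\le \inf_{s>0} e^{s}\,\E[\cdots]$, or more concretely the ordering trick: partition $\{0,\dots,k\}$ into blocks of consecutive ascents of the $U$'s. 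Within an ascending run the $U$'s are ordered, which contributes a factor $1/(\text{run length})!$. Across a descent the drop is less than $\theta$. Summing over compositions gives roughly $\sum \prod \theta^{\#\text{descents}}/\prod \ell_i!$, and this should optimize to $(e\theta)^{k}$ up to polynomial factors, in the same way $\theta^k k^k/k!\approx (e\theta)^k$.

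The hardest step is getting the exponential rate exactly $e\theta$, rather than some worse constant. I would first try the cleanest heuristic made rigorous: $\P(Y_0<\dots<Y_k)\le \P(\text{all } U_i \in \text{a window respecting drift})$. Since $Y_k-Y_0<1+k\theta$ and the $Y_i$ are increasing with each $Y_i$ uniform on an interval of length $1$, the joint density bound gives $p_k\le \mathrm{vol}\{y_0<\dots<y_k,\ y_i\in[i\theta,i\theta+1]\}$. This volume is at most $(1+k\theta)^{k+1}/(k+1)!$ divided appropriately. More usefully, conditioning on $Y_0=y$ and using $y_i\in[y, y+1+k\theta]$, the volume is bounded by $\frac{(1+k\theta)^{k}}{k!}$, which by Stirling is about $(e\theta)^k(1+1/(k\theta))^k\approx (e\theta)^k e^{1/\theta}$, with polynomial corrections. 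That gives $p_k\le C_\theta (e\theta)^k\sqrt{k}$, which is subexponential-times-$(e\theta)^k$ and suffices since $\lambda<1/(e\theta)$. So the plan reduces to this simple simplex-volume estimate, and I expect only the bookkeeping of the constraint $y_i\le i\theta+1$ to need care.
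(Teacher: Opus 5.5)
Your proof is correct. Its outer layer matches the paper's: pick $\lambda\in(\br T,\frac{1}{e\theta})$, use that $\inf_\Pi\sum_{v\in\Pi}\lambda^{-|v|}=0$ for $\lambda>\br T$, bound the probability by the expected number of increasing paths from $\rho$ to $\Pi$, and finish with Stirling.

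The real difference is the single-path estimate. The paper spends a whole section proving $p_h\le(1+\theta h)^{h+1}/(h+1)!$ (Proposition \ref{prop:path_upper_bd}). It splits according to which consecutive uniforms are ``out of order'', which is essentially the ascending-run decomposition you sketched and then dropped. It then peels these off one at a time with an inductive integration lemma and collapses the sum with an inclusion--exclusion identity.

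Your simplex observation gives exactly the same inequality in one line. The vector $(Y_0,\dots,Y_k)$ is uniform on $\prod_i[i\theta,i\theta+1]\subset[0,1+k\theta]^{k+1}$. So $p_k$ is at most the volume $(1+k\theta)^{k+1}/(k+1)!$ of the ordered simplex in that cube. The paper's route yields finer pattern-by-pattern bounds along the way (Proposition \ref{proposition6.3}), but these are never used. So for this theorem, and for Propositions \ref{prop:no_perco_GW} and \ref{prop:lattice_no_perc}, which rely on the same bound, your route is strictly more economical.

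Your conditioned variant $(1+k\theta)^k/k!$ is slightly weaker but equally sufficient. Using a uniform constant $C$ with $p_k\le C\lambda^{-k}$, instead of the paper's observation that $\min_{v\in\Pi_n}|v|\to\infty$, is also fine.

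A few small tidy-ups:
\begin{itemize}
\item Stirling produces a factor $1/\sqrt{2\pi k}$, not $\sqrt{k}$. Your bound is still true, just loose.
\item No bookkeeping of the constraints $y_i\le i\theta+1$ is needed, since you only use containment in the cube.
\item The constant $e^{1/\theta}$ degenerates at $\theta=0$; there $p_k\le 1/(k+1)!$ settles the matter directly.
\item The exploratory alternatives (Tak\'acs-type formulas, generating functions, run decompositions) and the phrase ``divided appropriately'' should simply be cut.
\end{itemize}
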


\begin{remark}
    In the case of rooted, infinite, locally finite trees, a simple coupling with Bernoulli percolation shows that there is Rough Mount Fuji accessibility percolation for $\theta > 1/\br T$, i.e.~that $\theta_c(T) \le 1/\br T$. This is implicit in \cite[Theorem 1]{duque2023rmf}.
\end{remark}

It is well-known that on the event of non-extinction, Bienaym\'e-Galton-Watson trees satisfy $\br T = \E[L]$ almost surely (see e.g.~\cite{lyons_peres:probability_on_trees}.) Thus the bound $\theta_c \ge 1/e\br T$ matches the asymptotic $m\theta_c(m)\to e$ mentioned after Theorem \ref{theomultitype}.

We now give two further results for Bienaym\'e-Galton-Watson (BGW) trees, which are less precise than Theorem \ref{theomultitype} but give more explicit bounds that may be easier to apply in practice. The first is a slight improvement on Theorem \ref{thm:no_perco_small_theta} above, showing that for BGW trees there is no accessibility percolation even at $1/em$, and giving an upper bound on the tail of the probability of a long increasing path.

\begin{proposition}\label{prop:no_perco_GW}
Suppose that $T$ is a Bienaym\'e-Galton-Watson tree whose offspring distribution $L$ satisfies $\E[L] = m\in(1,\infty)$. For any $\theta\le \frac{1}{em}$,
\[\P(\text{there exists an infinite increasing path}) = 0,\]
and
\[\limsup_{h\to\infty}\frac{1}{h}\log \P(\text{there exists an increasing path of length $\ge h$ starting from }\rho) \le \log(em\theta).\]
\end{proposition}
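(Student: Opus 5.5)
We will bound the expected number of increasing paths of length $h$ from $\rho$ in the BGW tree and use a first moment (union) bound. Fix $h$, and let $N_h$ be the number of vertices $v$ at generation $h$ such that the path $\rho=v_0,v_1,\dots,v_h=v$ is increasing. By independence of the tree and the labels, $\E[N_h] = m^h p_h(\theta)$. Here $p_h(\theta)$ is the probability that $U_{0}<U_1+\theta<U_2+2\theta<\dots$ holds along a fixed path of length $h$, i.e.\ that $U_{i}-U_{i-1}>-\theta$ for all $i$. (If the root label is fixed, say $X_\rho=U_\rho$, this is the same computation.) The event $\{$there is an increasing path of length $\ge h\}$ is contained in $\{N_h\ge1\}$, since any increasing path of length $\ge h$ from $\rho$ passes through an increasing path of length exactly $h$. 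Hence
\[\P(\text{increasing path of length}\ge h)\le m^h p_h(\theta).\]

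The key step is the estimate $p_h(\theta)\le C\,(e\theta)^h\cdot \mathrm{poly}(h)$, or more precisely $\limsup_h \frac1h\log p_h(\theta)\le \log(e\theta)$. I would do this by discretising: partition $[0,1]$ into blocks of length $\theta$, giving $k=\lceil 1/\theta\rceil$ blocks. The condition $U_i>U_{i-1}-\theta$ means the sequence can drop by less than $\theta$. A cruder but cleaner route is to compute $p_h$ exactly via the volume of the polytope $\{u\in[0,1]^{h+1}: u_i>u_{i-1}-\theta\}$. Substituting $w_i=u_i+i\theta$ gives an increasing sequence $w_0<w_1<\dots<w_h$ with $w_i\in[i\theta,i\theta+1]$. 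This is a Gessel/Steck-type determinant volume, and it should be bounded by the volume of increasing sequences in an interval of length $h\theta+1$ with $h+1$ points. That volume is $(1+h\theta)^{h+1}/(h+1)!$. By Stirling,
\[\frac{(1+h\theta)^{h+1}}{(h+1)!}\approx \frac{(h\theta)^{h}e^{1/\theta}\cdot h\theta}{(h/e)^{h}\sqrt{h}\,(h+1)}=(e\theta)^h e^{1/\theta}\,\mathrm{poly}(h),\]
using $(1+h\theta)^h = (h\theta)^h(1+1/(h\theta))^h\le (h\theta)^h e^{1/\theta}$. This gives $\frac1h\log \P(\cdot)\le \log(em\theta)+o(1)$, which is the second claim.

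For the first claim, when $\theta<1/(em)$ the bound decays exponentially, so there is no infinite path a.s. The main obstacle is the boundary case $\theta=1/(em)$, where the crude bound only gives $\mathrm{poly}(h)\cdot e^{1/\theta}$. This polynomial factor must be shown to tend to zero. From Stirling the factor is roughly $h\theta\cdot h^{-1/2}/(h+1)\sim h^{-1/2}$, which does go to $0$, so the first moment already suffices if the constants are tracked carefully: $\E[N_h]\le e^{1/\theta}\theta\,h^{-1/2}\cdot O(1)\to0$. If the crude polytope bound turns out to lose too much, I would refine it using the exact count. The walls $w_i\ge i\theta$ and $w_i\le i\theta+1$ act like a ballot constraint, and a reflection or ballot argument gives an additional factor of order $h^{-1}$ to $h^{-3/2}$. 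In either case $\P(N_h\ge1)\le\E[N_h]\to0$, so there is no infinite increasing path almost surely.
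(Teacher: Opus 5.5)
Your argument is correct. Its outer structure matches the paper's: the first moment bound $\P(N_h\ge 1)\le m^h p_h(\theta)$, the single-path estimate $p_h(\theta)\le (1+h\theta)^{h+1}/(h+1)!$, and then Stirling.

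Where you genuinely differ is in how you get the single-path estimate. The paper proves it (Proposition~\ref{prop:path_upper_bd}) in several steps. It splits according to which consecutive uniforms are ``out of order''. It peels these off inductively, using the integral inequalities of Lemmas~\ref{lem:integrals} and~\ref{lemmapreinductionnegative}, to get an alternating binomial bound for each pattern (Proposition~\ref{proposition6.3}). It then sums over patterns, and a binomial identity collapses everything to the single term $j=h$.

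Your shift $w_i=u_i+i\theta$ gives the same bound in one line. You should present it as a fact rather than something that ``should'' hold. The shift preserves Lebesgue measure, and each $[i\theta,i\theta+1]$ with $0\le i\le h$ lies inside $[0,1+h\theta]$. So the region $\{w_0<\dots<w_h,\ w_i\in[i\theta,i\theta+1]\}$ is contained in the ordered simplex $\{0\le w_0<\dots<w_h\le 1+h\theta\}$, whose volume is $(1+h\theta)^{h+1}/(h+1)!$. No determinant formula, ballot refinement or fallback is needed. For this proposition the paper's pattern-by-pattern estimate buys nothing extra, since after summation it gives exactly the same bound.

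Your Stirling step is fine once written with inequalities. The bounds $(1+h\theta)^{h}\le (h\theta)^h e^{1/\theta}$, $1+h\theta\le h+1$ and $(h+1)!\ge (h+1)\sqrt{2\pi h}\,(h/e)^h$ give $m^h p_h(\theta)\le (em\theta)^h e^{1/\theta}(2\pi h)^{-1/2}$. This tends to $0$ when $em\theta\le 1$, covering the boundary case, and it yields the $\limsup$ bound. It agrees with the paper's $\frac{1}{\sqrt{2\pi(h+1)}}\bigl(\frac{em}{h+1}+em\theta\bigr)^{h+1}$: both are of order $e^{em}h^{-1/2}$ at $\theta=1/(em)$.

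The only omission is the degenerate case $\theta=0$, where $e^{1/\theta}$ makes no sense. There $p_h(0)=1/(h+1)!$, and both claims are immediate.
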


Our second bound for BGW trees uses a simple discretisation technique to provide an explicit upper bound on $\theta_c$. One can see from the characterisation of $\theta_c$ in Theorem \ref{theomultitype} that, unlike our lower bound on $\theta_c$, this is not asymptotically tight as $m\to\infty$.

\begin{proposition}\label{prop:perco_GW}
Suppose that $T$ is a Bienaym\'e-Galton-Watson tree whose offspring distribution $L$ satisfies $\E[L]=m$ and $\E[L\log_+ L] < \infty$.
Then for any $\theta > 1-\sqrt{1-\frac{1}{m}}$,
\[\P(\text{there exists an infinite increasing path}) >0.\]
In particular, $\theta_c(m) \le 1-\sqrt{1-\frac{1}{m}} \sim \frac{1}{2 m}$.
\end{proposition}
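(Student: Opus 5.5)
The plan is to find, inside the accessible subtree, a multitype Bienaym\'e-Galton-Watson process with finitely many types, and to show that it is supercritical when $\theta>1-\sqrt{1-1/m}$. Multitype Kesten--Stigum theory, which is where the hypothesis $\E[L\log_+L]<\infty$ enters, then makes it survive with positive probability, and survival gives an infinite increasing path from $\rho$.

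\emph{Step 1 (the monotone acceptance rule).} A child $w$ of $v$ can extend an increasing path through $v$ exactly when $X_w>X_v$, that is, when $U_w>U_v-\theta$. If we only remember which cell of a partition of $[0,1]$ contains $U_v$, then we may use the worst point of that cell. So we accept $w$ when $U_w>\sup(\text{cell of }U_v)-\theta$. An accepted child is still accepted if its parent's label is lowered, so the accepted vertices form a subtree of the accessible subtree. Because the $U_w$ are i.i.d.\ and independent of the tree, the accepted subtree, marked by cells, is a multitype BGW process. Its mean matrix is $M=mP$, where $P_{ij}$ is the probability that a uniform child lands in cell $j$ and is accepted from a parent in cell $i$. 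It is then enough to choose the cells so that $M$ is irreducible with Perron root $m\lambda(\theta)>1$.

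\emph{Step 2 (the main obstacle: choosing the partition).} The naive choice, cells $[1-k\theta,1-(k-1)\theta)$, gives a triangular $P$ with Perron root $\theta$. That only yields $\theta_c\le 1/m$, because a parent at the top of a cell can never pass to a lower cell. The partition has to be placed so that downward transitions occur with positive probability and $P$ becomes irreducible. The natural first attempt is the two-type split at $t=1-s$, with $0<s<\theta$:
\begin{itemize}
\item a high parent ($U\ge t$) accepts high children with probability $s$ and low children with probability $\theta-s$;
\item a low parent accepts high children with probability $s$ and low children with probability $\theta$.
\end{itemize}
The resulting matrix has trace $s+\theta$ and determinant $s^2$, and optimising over $s$ gives Perron root $4\theta/3$. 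This is already better than $\theta$ but still falls short of $\theta(2-\theta)$. So the two-type split alone does not reach the stated bound, and this is the step I expect to be the real obstacle.

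What I would try next is refining the partition, or taking a grandchild (two-generation) step as a single move. The aim is to recover the probability $1-(1-\theta)^2$, the chance that two independent uniforms are not both below $1-\theta$. That expression strongly suggests the two-generation or pair structure is what yields $\lambda(\theta)=\theta(2-\theta)$. Once a $P$ with this Perron root is found, the condition $m\theta(2-\theta)>1$ rearranges exactly to $(1-\theta)^2<1-1/m$, i.e.\ $\theta>1-\sqrt{1-1/m}$.

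\emph{Step 3 (survival).} Apply the positive regular multitype Kesten--Stigum theorem. The offspring counts of each type are bounded by $L$, so the $L\log L$ moment transfers, and a supercritical irreducible process survives with positive probability. Irreducibility, or at least positive regularity of the relevant class, has to be checked for the partition chosen in Step 2. Finally, $1-\sqrt{1-1/m}=\frac{1}{2m}+O(m^{-2})$ gives the stated asymptotic.
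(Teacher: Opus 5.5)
\textbf{Gap: Step 2 is not carried out.} Your Steps 1 and 3 are sound and match the paper's framework: discretise the uniform labels, accept a child using the worst point of the parent's cell, and conclude with the multitype Kesten--Stigum theorem. But Step 2 is the whole quantitative content of the proposition, and you leave it open. You never exhibit a finite-type process whose Perron root is at least $m\theta(2-\theta)-o(1)$, and your own two-type computation stops at $4\theta/3$. So the argument as written does not reach the stated threshold. The direction you suggest for closing the gap also misreads where $1-(1-\theta)^2$ comes from. It is not a two-generation quantity, nor ``two uniforms not both below $1-\theta$''. It is $\P(|U-V|\le\theta)$ for independent uniforms $U,V$: the Rayleigh quotient of the \emph{symmetrised} kernel $\ind\{|x-y|\le\theta\}$ at the constant function. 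Refining the partition is the right instinct, but you also need a way to bound the Perron root of the resulting asymmetric matrix from below.

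\textbf{The missing idea: symmetrise, then use a Rayleigh quotient.} Take $k$ equal cells, $Y_v=\lfloor kU_v\rfloor$, and accept a child $w$ of $v$ when $Y_w\ge Y_v-\theta'$, where $\theta'=\lfloor k\theta\rfloor-1$. This is your worst-point rule up to rounding. The mean matrix $M$ has $m_{ij}=m/k$ for $j\ge i-\theta'$ and $0$ otherwise. Its Perron root is awkward to compute directly. The Collatz--Wielandt bound with the constant vector only gives the minimum row sum, roughly $m\theta$, which is your naive bound again.

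Instead, discard the entries with $j>i+\theta'$. The resulting symmetric band matrix $\tilde M$, with $\tilde m_{ij}=m/k$ for $|i-j|\le\theta'$, is entrywise at most $M$. Hence $\rho(M)\ge\rho(\tilde M)$; equivalently, the $\tilde M$-process can be coupled beneath yours. Because $\tilde M$ is symmetric, the Courant--Fischer (Rayleigh quotient) bound with the all-ones vector gives
\[
\rho(\tilde M)\ \ge\ \frac{1}{k}\,\mathbf{1}\cdot\tilde M\mathbf{1}\ =\ \frac{m}{k^2}\,\#\bigl\{(i,j)\in\{0,\ldots,k-1\}^2:|i-j|\le\theta'\bigr\}\ \longrightarrow\ m\bigl(1-(1-\theta)^2\bigr)
\]
as $k\to\infty$.

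You do not need a partition whose Perron root is exactly $\theta(2-\theta)$. The hypothesis $m\theta(2-\theta)>1$ is strict, so a single large $k$ gives $\rho(M)>1$. For such $k$ the process is positive regular, since $\theta'\ge1$ and the diagonal entries are positive, and your Step 3 then finishes the proof. No grandchild step is needed.
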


As an application of our results on trees, we can obtain precise asymptotics for the probability that $h+1$ RMF labels are in order, which may be of independent interest. Note that this result does not involve accessibility percolation or trees; it is a statement only about independent Uniform$(0,1)$ random variables.

\begin{corollary}\label{cor:spectral_prob_est}
There exist constants $0<c_1<1<c_2<\infty$ such that for any $\theta\in(0,1)$,
\[\frac{c_1}{m_c(\theta)^h} \le \P(U_0<U_1+\theta<\ldots<U_h+h\theta) \le \frac{c_2}{m_c(\theta)^h},\]
where $m_c(\theta)$ is the minimal root of the polynomial equation \eqref{eq:criticalpoly}, and in particular
\[\frac{1}{e\theta}\le m_c(\theta) \le \frac{1}{2\theta - \theta^2}.\]
\end{corollary}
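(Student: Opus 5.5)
We will view $p_h:=\P(U_0<U_1+\theta<\ldots<U_h+h\theta)$ as the iterate of a positive integral operator and read off its growth rate from the spectral data behind Theorem \ref{theomultitype}. Define $f_0\equiv 1$ on $[0,1]$ and
\[f_{k+1}(x)=\int_{\max(0,x-\theta)}^{1} f_k(y)\,dy,\]
so that $f_h(x)$ is the probability that $x<U_1+\theta<\ldots<U_h+h\theta$ given $U_0=x$ (shifting labels by $\theta$ each step). Then $p_h=\int_0^1 f_h(x)\,dx$. Equivalently $f_h=K^h\mathbf 1$, where $K$ is the operator with kernel $k(x,y)=\ind_{\{y>x-\theta\}}$ on $[0,1]$.

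\textbf{Step 1: spectral structure.} The kernel is nonnegative and bounded, so $K$ is compact on $L^2[0,1]$. Moreover $K^{N}$ has a strictly positive kernel for $N\approx\lceil 1/\theta\rceil$, since from any $x$ one can reach any $y$ in about $1/\theta$ steps. Krein--Rutman (Jentzsch) therefore gives a simple leading eigenvalue $r>0$ with strictly positive continuous eigenfunction $\phi$, a spectral gap, and a positive left eigenfunction $\psi$. Consequently $K^h\mathbf 1=r^h\langle\psi,\mathbf 1\rangle\phi\,(1+o(1))$ uniformly, which gives $c_1r^h\le p_h\le c_2r^h$ with constants depending on $\theta$.

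\textbf{Step 2: identify $r=1/m_c(\theta)$.} This is the link to Theorem \ref{theomultitype}: in a BGW tree the expected number of increasing paths of length $h$ is $m^hp_h$, and $m_c(\theta)$ is the threshold at which this first-moment growth rate $mr$ crosses $1$. I expect the proof of Theorem \ref{theomultitype} to identify $m_c$ as $1/r$ via the multitype branching process with type $U_v$. Directly, one solves $K\phi=r\phi$ by differentiating: $\phi'(x)=-r^{-1}\phi(x-\theta)\ind_{x>\theta}$, with $\phi$ a polynomial on each interval $[j\theta,(j+1)\theta)$. Imposing the boundary condition $\phi(1)=0$ produces exactly the condition $Q_\theta(1/r)=0$, a Laplace-type delay-equation solution with terms $(x-j\theta)^j/j!$ evaluated at $x=1+\theta$. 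Positivity of $\phi$ forces the largest $r$, i.e. the minimal root $m$.

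\textbf{Step 3: uniformity in $\theta$ and the explicit bounds.} This is the main obstacle. The statement asks for $c_1,c_2$ independent of $\theta\in(0,1)$, and the spectral gap may degenerate as $\theta\to0$. To handle it I would use $\phi$ directly and bound its ratios. We have $\langle\mathbf 1,K^h\phi\rangle=r^h\int\phi$. Then $\phi$ is decreasing and $\phi(x)\ge\phi(0)\cdot c$ on $[0,1-\theta]$, since $\phi'/\phi$ is controlled by the delay equation. A direct comparison $c\,\phi\le \mathbf 1$ and $K\mathbf 1\le C\phi$ yields the two-sided bound with absolute constants, after checking $\phi(1-\cdot)$ near the endpoint $1$ where $\phi$ vanishes linearly.

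Finally, the explicit bounds follow from results already in the paper. Proposition \ref{prop:no_perco_GW} gives $\limsup\frac1h\log(m^hp_h)\le\log(em\theta)$. Combined with $p_h\asymp m_c^{-h}$, this yields $m_c\ge 1/(e\theta)$. Proposition \ref{prop:perco_GW} together with Theorem \ref{theomultitype} gives $m_c(\theta)\le m$ whenever $\theta>1-\sqrt{1-1/m}$. Inverting, $m=1/(2\theta-\theta^2)$ gives $m_c(\theta)\le 1/(2\theta-\theta^2)$.
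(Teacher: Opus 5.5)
\textbf{Step 3 cannot work: it aims at a false statement.} Constants independent of $\theta$ do not exist. For $h=1$ you have $p_1=\P(U_0<U_1+\theta)\ge\frac12$, while the second part of the statement gives $m_c(\theta)\ge 1/(e\theta)$. So $p_1\le c_2/m_c(\theta)$ forces $c_2\ge m_c(\theta)/2\ge 1/(2e\theta)\to\infty$ as $\theta\to0$. The corollary must therefore be read with $c_1,c_2$ depending on $\theta$, and that is all the paper proves. At $m=m_c(\theta)$ the eigenfunction $f$ is decreasing with $f(0)=1$ and $f(1)>0$ (Lemma \ref{lemmaf1positive}), so $f(1)N_h\le W_h\le N_h$. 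Taking expectations and using many-to-one gives $c_1=\int_0^1 f=1/m_c(\theta)\le e\theta$ and $c_2=c_1/f(1)$, and both degenerate as $\theta\to0$.

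Your sketch of Step 3 also rests on two false claims.
\begin{itemize}
\item You claim $\phi\ge c\,\phi(0)$ on $[0,1-\theta]$ with an absolute $c$. This fails: since $\phi$ is decreasing, $(1-\theta)\phi(1-\theta)\le\int_0^1\phi=r\phi(0)\le e\theta\,\phi(0)$.
\item You claim $\phi$ vanishes at $1$. In fact $\phi(1)=r^{-1}\int_{1-\theta}^1\phi>0$. What vanishes is the continuation $F$ of the delay-equation solution at $1+\theta$, because $F(1+\theta)=1-r^{-1}\int_0^1F$.
\end{itemize}
The second point also corrects Step 2. The right boundary condition is $F(1+\theta)=0$, equivalently $r\phi(0)=\int_0^1\phi$, and it gives $Q_\theta(1/r)=F(1+\theta)=0$. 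Imposing $\phi(1)=0$ gives a different polynomial: for $\theta>1/2$ you would get $1-m(1-\theta)$ instead of $1-m+m^2(1-\theta)^2/2$.

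Once Step 3 is dropped and $\theta$-dependent constants are accepted, your Steps 1--2 and final paragraph do prove the corollary, by a different route from the paper's.
\begin{itemize}
\item \emph{Your route.} You get the sharp asymptotics $p_h\sim r^h\langle\psi,\mathbf 1\rangle\langle\mathbf 1,\phi\rangle$ from Jentzsch's theorem (primitivity of $K^N$ for $N>1/\theta$, simplicity, spectral gap). The finitely many remaining $h$ are then handled by $p_h>0$. This yields the limiting constant but needs the heavier theory.
\item \emph{The paper's route.} It needs no spectral gap, only a nonnegative eigenfunction (Krein--Rutman) and the sandwich $f(1)\mathbf 1\le f\le\mathbf 1$. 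This gives $r^h\int_0^1 f\le p_h\le r^h\int_0^1 f/f(1)$ for every $h$, with explicit constants. It is exactly the ``direct comparison'' you mention, once $\phi(1)>0$ is used.
\end{itemize}

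For the explicit bounds:
\begin{itemize}
\item Your lower bound is valid and differs from the paper's use of Theorem \ref{thm:no_perco_small_theta} with $\br T=m$ plus Theorem \ref{theomultitype}. You combine the first part with $p_h\le(1+\theta h)^{h+1}/(h+1)!$ from Proposition \ref{prop:path_upper_bd} and Stirling. Cite that rather than Proposition \ref{prop:no_perco_GW}, whose statement concerns existence of paths.
\item For the upper bound, let $m\downarrow 1/(2\theta-\theta^2)$ rather than setting $m$ equal to it, since Proposition \ref{prop:perco_GW} requires strict inequality.
\end{itemize}
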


\subsection{Results on $\mathbb{Z}^n$}\label{sec:zd_intro}

We now consider the case when $G$, our underlying graph, is the integer lattice $\mathbb{Z}^n$ for $n\ge 2$. In this case, we will often write $0$ rather than $\rho$ for the root (origin) of the graph. Moreover, in this setting, it is important to specify:
\begin{enumerate}[(a)]
\item which paths are permitted: only those moving monotonically away from $0$, or including those that are allowed to ``backtrack'', i.e.~sometimes take a step towards $0$;
\item which metric $d$ is used in Definition \ref{RMF}.
\end{enumerate}
These choices are essentially trivial in the case of trees: every (simple) path is non-backtracking, and the graph distance is the natural metric. But each choice has material consequences for the model on $\mathbb{Z}^n$.

For paths, we consider both options mentioned above:
\begin{enumerate}
    \item[(a.i)] the \emph{non-backtracking} case, where we restrict to paths where each step moves further from the origin, i.e.~simple paths $x_0,x_1,\ldots$ where $x_0=0$ and $d(0,x_{j+1})>d(0,x_j)$ for each $j$;
    \item[(a.ii)] all (simple) paths: there is no restriction.
\end{enumerate}
For the metric $d$, the graph distance is the simplest choice, and we give a proof of that case first, essentially using existing methods. However, it is also natural to consider the Euclidean distance, and this leads to qualitative differences in the set of accessible paths: see Figures \ref{fig:latticebacksteps} and \ref{fig:latticenobacksteps}. Our main proof will allow us to use the $\ell^q$ distance $d(x,y) = \|x-y\|_q$ for any $q\in(1,\infty]$ (we use $q$ rather than $p$ to avoid confusion with another parameter $p$ that we will introduce shortly).

Recall that we say that rough Mount Fuji accessibility percolation occurs if there is an infinite path from $0$ along which the labels are increasing.  In each case above we define
\[\theta_c = \inf\{\theta\in[0,1] : \P_\theta(\text{RMF accessibility percolation})>0\},\]
noting that of course the value of $\theta_c$ depends on the distance $d$, the dimension $n$, and the choice of non-backtracking or all paths.

In fact, in the case that we allow backtracking paths, the system is not monotone in $\theta$; we demonstrate with a caricature of a realisation in Figure \ref{fig:counterexample}, where for $\theta<0.2$ and $\theta>0.4$, there are potentially long accessible paths, whereas for $\theta\in(0.2,0.4)$ there are not. (Of course, in reality the Uniform$(0,1)$ random variables will not be multiples of $0.1$, but a small perturbation of this realisation will occur with positive probability and still demonstrate non-monotonicity in $\theta$.) Thus, to cover the case that we allow backtracking paths, we should also define a second critical value of $\theta$,
\[\theta'_c = \sup\{\theta\in[0,1] : \P_\theta(\text{RMF accessibility percolation})=0\}.\]

\begin{figure}[h]
  \centering
  \includegraphics[width=0.5\textwidth]{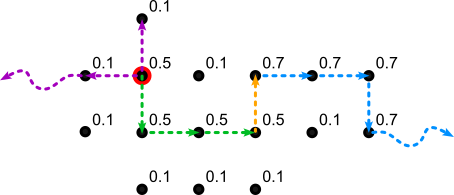}
  \caption{A caricature of a realisation on $\mathbb{Z}^2$, using the graph distance and allowing backtracking paths, where we do not have monotonicity in $\theta$. The circled red vertex is the origin. The numbers by the vertices show the independent Uniform$(0,1)$ random variables. For $\theta<0.2$, the orange edge is accessible and there may be a long accessible path continuing along the blue arrow to the right. For $\theta\in(0.2,0.4)$, the orange edge is no longer accessible, and the longest accessible path is marked in green. For $\theta>0.4$ there may be long accessible paths e.g.~following the purple arrow to the left.}\label{fig:counterexample}
\end{figure}

When we do not allow backtracking paths, by monotonicity, we have that rough Mount Fuji accessibility percolation occurs with strictly positive probability for $\theta>\theta_c$, and with zero probability for $\theta<\theta_c$; in particular $\theta'_c = \theta_c$.

When $\theta = 0$, it is easy to show that the probability of accessibility percolation is zero; and when $\theta=1$, the path along each axis (and in fact every non-backtracking path in the case when $d$ is the graph distance) is guaranteed to be increasing. Our main aim is to show that the critical values for accessibility percolation are non-trivial in all cases.

\begin{theorem}\label{thm:non_trivial_phase_transition}
    For any $n\ge 2$, in each of the cases (a.i) and (a.ii) above, when $d$ is the $\ell^q$ distance for any $q\in[1,\infty]$, the Rough Mount Fuji accessibility percolation critical values are non-trivial, i.e.~$0<\theta_c\le \theta'_c <1$.
\end{theorem}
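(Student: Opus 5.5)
The statement has two halves: a lower bound $\theta_c>0$, which I expect follows from the tree results by a path-counting (first moment) argument, and an upper bound $\theta'_c<1$, which needs a coupling with oriented percolation. Since $\theta_c\le\theta'_c$ by definition, it suffices to show that there is no percolation for small $\theta$ (with all simple paths allowed, which covers (a.i) as well) and percolation for $\theta$ close to $1$ (with non-backtracking paths, which covers (a.ii) as well).

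\emph{Lower bound.} Take any simple path $0=x_0,x_1,\ldots,x_h$ in $\Z^n$. For every $q\in[1,\infty]$ a unit step changes $\|x\|_q$ by at most $1$, so $d_{j+1}-d_j\le 1$ where $d_j=\|x_j\|_q$. Hence if the labels along the path are increasing, then $U_{x_{j+1}}-U_{x_j}>-\theta(d_{j+1}-d_j)\ge-\theta$ for every $j$. Therefore the path being accessible is contained in the event $\{U_0<U_1+\theta<\cdots<U_h+h\theta\}$ for the i.i.d.\ uniforms along the path. By Corollary~\ref{cor:spectral_prob_est}, this event has probability at most $c_2 m_c(\theta)^{-h}\le c_2(e\theta)^h$. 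There are at most $2n(2n-1)^{h-1}$ simple paths of length $h$ from $0$, so the expected number of accessible paths of length $h$ tends to $0$ when $\theta<1/(e(2n-1))$. This rules out percolation for all small $\theta$, in both cases (a.i) and (a.ii), so $\theta_c>0$.

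\emph{Upper bound, graph distance.} Here I restrict to oriented paths in the positive orthant, where every step increases $d$ by exactly $1$. Declare a site $v$ open if $U_v<\theta$. If $x\to y$ is such a step and $x$ is open, then $U_y+\theta\ge\theta>U_x$, so the step is accessible. Hence every infinite oriented open path from an open origin is an infinite accessible non-backtracking path. Oriented site percolation on $\Z^2\subset\Z^n$ with density $\theta$ close to $1$ percolates with positive probability, and this gives $\theta'_c<1$.

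\emph{Upper bound, $\ell^q$ with $q>1$.} This is the main obstacle. A unit step may increase $\|x\|_q$ by an arbitrarily small amount; for instance, a step in a small coordinate increases it by roughly $(x_i/\|x\|_q)^{q-1}$. The site-by-site coupling above therefore fails. My plan is to confine paths to a cone $C$ in a $2$-dimensional coordinate plane and to use a renormalised (block) oriented percolation.

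First, I choose $C$ and a set of admissible macro-moves, each a short nearest-neighbour path, such that every macro-move increases $\|x\|_q$ by at least some $\delta>0$ uniformly on $C$. Inside a macro-move, individual steps may increase the distance very little. A block is declared good if its labels admit an increasing crossing in each permitted macro-direction for every entry label in a prescribed range. A sufficient condition is that all labels in the block are monotone in a suitable order up to a slack of $\theta\delta$. The probability of this tends to $1$ as $\theta\to1$, because the drift then dominates all but a vanishing fraction of the label differences.

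Blocks that are not adjacent are independent, so goodness is a finite-range dependent field. By the Liggett--Schonmann--Stacey comparison, it dominates supercritical oriented percolation once $\theta$ is close to $1$, which yields an infinite accessible path with positive probability. The delicate point is to make the good-block event, and the label ranges at block boundaries, compatible across consecutive blocks.

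For $q=\infty$ the admissible directions are very restricted: in case (a.i) only a step increasing the maximal coordinate is allowed. I expect to need the cone adjacent to a single axis, with sideways moves absorbed inside macro-moves whose net $\ell^\infty$ increase is positive. This is the step I would check most carefully, since it depends on exactly which paths case (a.i) admits when $q=\infty$.
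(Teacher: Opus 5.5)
Your lower bound and the $\ell^1$ coupling are correct and match the paper. The paper applies Proposition~\ref{prop:path_upper_bd} directly; going through Corollary~\ref{cor:spectral_prob_est} is valid but roundabout.

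\textbf{The gap: the $\ell^q$, $q>1$, step.} Your claim that a fixed-size block is good with probability tending to $1$ as $\theta\to1$ is false.

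A single step $x\to y$ with increment $\eta=\|y\|_q-\|x\|_q$ is increasing iff $U_y>U_x-\theta\eta$. This fails with probability $(1-\theta\eta)^2/2\ge(1-\eta)^2/2$, however close $\theta$ is to $1$. Monotonicity is required at every step, so the net gain $\delta$ of a macro-move buys nothing.

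For $q>1$ and $\|x\|_q$ large, the right and up increments satisfy $\eta_{\rightarrow}^{q/(q-1)}+\eta_{\uparrow}^{q/(q-1)}\approx 1$. So in any cone, one of the two step directions has increments bounded away from $1$. A $\theta$-independent configuration of positive probability then blocks every crossing that uses that direction: a row (or column) of labels near $1$ followed by one of labels near $0$. Any two-dimensional block-goodness event needs such a crossing.

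Hence neither your good event nor your slack-$\theta\delta$ sufficient condition has probability near $1$, and Liggett--Schonmann--Stacey has nothing to act on. The matching of entry-label ranges between blocks, which you flag, is also left unresolved.

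\textbf{The missing idea.} The \emph{block size}, not $\theta$, must drive the goodness probability to $1$, with $\theta$ chosen only afterwards. The paper works in a thin cone along the horizontal axis with bricks of length $n\to\infty$, and then takes $\theta>1-5^qn^{-q}$.

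\begin{itemize}
\item Horizontal steps there have $\eta\ge 1-2\cdot 5^qn^{-q}$. So a horizontal edge is declared open by a single-label test, $U_{(a,b)}\in(3\cdot 5^qn^{-q},1-3\cdot 5^qn^{-q})$, which forces an increase whatever the next label is. All $2n$ horizontal edges of a brick are open with probability $(1-6\cdot5^qn^{-q})^{2n}\to1$; this is exactly where $q>1$ enters.
\item Vertical steps have $\eta\approx 0$ and succeed with probability about $1/2$ for every $\theta$. The brick compensates by offering $n/4$ disjoint candidate vertical edges per layer, each open iff $U_{\mathrm{bottom}}<U_{\mathrm{top}}$. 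One of them is open with probability $1-2^{-n/4}$.
\item Every condition guarantees an increase on its own, so no label ranges need matching across bricks.
\item The interval is symmetric about $1/2$, so the comparisons are independent of the horizontal tests, and distinct bricks are exactly independent.
\end{itemize}

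Plain oriented site percolation on the brick lattice then finishes the argument, together with a finite connection to $0$.

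\textbf{The case $q=\infty$.} Your concern here is legitimate, but the proposed fix cannot work.

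If (a.i) is read literally, requiring $\|x_{j+1}\|_\infty>\|x_j\|_\infty$ at every step, then from $ke_i$ the only admissible step is to $(k+1)e_i$. The non-backtracking paths are just the $2n$ axis rays, each of which is a.s.\ not increasing when $\theta<1$. Sideways steps are excluded individually, so no macro-move can absorb them. Under this reading $\theta'_c=1$, so the claim actually fails.

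The paper's proof implicitly reads non-backtracking as coordinate-monotone, as in its proof of Proposition~\ref{prop:lattice_no_perc}(i). Under that reading $q=\infty$ is the easiest case: below the diagonal, horizontal steps have $\eta=1$ and vertical steps $\eta=0$, and the same bricks work. Case (a.ii) is unaffected, since brick paths are simple.
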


We break this result down into various upper and lower bounds. First, we can give an easy lower bound on $\theta_c$ by embedding a tree in the lattice. In fact, our proof works for very general graphs.

\begin{proposition}\label{prop:lattice_no_perc}
	Suppose that $\mathcal{G}$ is a rooted, bounded degree graph, and that $d$ is a metric on the vertices of $\mathcal{G}$ such that $d(u,v)\le 1$ for any neighbouring vertices $u$ and $v$. Let $\Delta$ be the maximum degree of $\mathcal{G}$. If $\theta \le \frac{1}{(\Delta-1)e}$, then there is no Rough Mount Fuji accessibility percolation on $\mathcal{G}$.
	
    In particular, for any $n\ge2$ and any $q\in[1,\infty]$, there is no Rough Mount Fuji accessibility percolation on the $n$-dimensional lattice with the $\ell^q$ distance
    \begin{enumerate}[(i)]
        \item with non-backtracking paths when $\theta \leq \frac{1}{ne}$;
        \item with all paths when $\theta \le \frac{1}{(2n-1)e}$.
    \end{enumerate}
    Thus we have $\theta_c \ge 1/(ne)$ in the non-backtracking case, and $\theta_c \ge 1/((2n-1)e)$ otherwise.
\end{proposition}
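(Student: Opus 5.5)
We bound the expected number of increasing paths of length $h$ from $\rho$ and show it tends to zero. Any infinite increasing path contains, for each $h$, an initial segment that is a simple path $\rho=x_0,x_1,\ldots,x_h$ with $X_{x_0}<X_{x_1}<\cdots<X_{x_h}$. The number of simple paths of length $h$ from $\rho$ in $\mathcal{G}$ is at most $\Delta(\Delta-1)^{h-1}$, since after the first step each vertex has at most $\Delta-1$ neighbours other than its predecessor. (In the non-backtracking lattice case we count more carefully below.)

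Next we bound the probability that a fixed simple path of length $h$ is increasing. Since $d$ satisfies the triangle inequality and $d(u,v)\le 1$ for neighbours, we have $d(\rho,x_{j})\le d(\rho,x_{j-1})+1$. Write $d_j=d(\rho,x_j)$. The condition $U_{x_{j-1}}+\theta d_{j-1}<U_{x_j}+\theta d_j$ then implies $U_{x_{j-1}}<U_{x_j}+\theta(d_j-d_{j-1})\le U_{x_j}+\theta$. Hence the event that the path is increasing is contained in $\{U_{x_0}<U_{x_1}+\theta<U_{x_2}+2\theta<\cdots\}$ in a weakened pairwise sense: $U_{x_{j-1}}-\theta< U_{x_j}$ for all $j$. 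Set $V_j=U_{x_j}+j\theta$. The pairwise condition becomes $V_{j-1}<V_j$ for all $j$, which is exactly the event $U_0<U_1+\theta<\cdots<U_h+h\theta$ for i.i.d.\ uniforms, since the vertices on a simple path are distinct. By the elementary estimate underlying Theorem \ref{thm:no_perco_small_theta} and Corollary \ref{cor:spectral_prob_est}, this probability is at most $c_2 m_c(\theta)^{-h}\le c_2(e\theta)^h$. If we prefer not to rely on the corollary, we can prove a bound of the form $\P(\cdot)\le (e\theta)^h\cdot\mathrm{poly}(h)$ directly. The constraint forces the ordering of the $U$'s to be compatible with a descent structure. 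More simply, the increasing event lies inside the event $\{U_{x_j}>U_{x_0}-j\theta\text{ for all } j\}$ together with an ordering, and a volume computation of the polytope $\{0\le u_j\le 1,\ u_{j-1}-\theta<u_j\}$ gives at most $(1+h\theta)^h/h!$ in the worst case. The cleaner route is the corollary.

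Combining, the expected number of increasing paths of length $h$ is at most $\Delta(\Delta-1)^{h-1}c_2(e\theta)^h$. This tends to $0$ when $(\Delta-1)e\theta<1$, so by Markov's inequality the probability of an infinite increasing path is $0$. The boundary case $\theta=1/((\Delta-1)e)$ is the main obstacle. Here we need the strict inequality $m_c(\theta)>1/(e\theta)$ from Corollary \ref{cor:spectral_prob_est} (the lower bound there is stated non-strictly). Alternatively, we can mimic the proof of Proposition \ref{prop:no_perco_GW}, which already handles $\theta=1/(em)$ by a sharper estimate of the form $\P\le C h^{-1/2}(e\theta)^h$, derived from the exact counting $\sim(h\theta)^h/h!$-type bound combined with Stirling's formula. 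I would first try to extract such a polynomial correction from the argument for Proposition \ref{prop:no_perco_GW} and apply it verbatim here.

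For the lattice consequences we use $d=\|\cdot\|_q$, which satisfies $\|x-y\|_q\le 1$ for neighbours when $q\in[1,\infty]$. With all paths, $\Delta=2n$, giving $\theta\le 1/((2n-1)e)$. With non-backtracking paths, each step must strictly increase $d(0,\cdot)$. For $\ell^q$ distance this rules out any step decreasing $|x_i|$ in some coordinate, and also rules out moving off the axis in the wrong direction only when this does not increase the norm. More simply, any step that increases $\|x\|_q$ must increase some $|x_i|$, so at most $n$ choices are available from each vertex (for $x_i=0$ there are two choices, but then the norm argument requires care). To get the bound $n$ we count paths by their sequence of coordinate indices. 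Once a coordinate's sign is fixed it is never reversed, and the at most $n$ initial sign choices contribute only a factor $2^n$. So the number of paths is at most $2^n n^h$, and the same argument yields $\theta\le 1/(ne)$.
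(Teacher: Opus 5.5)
Your strategy is the paper's: a first-moment bound over the at most $\Delta(\Delta-1)^{h-1}$ simple paths of length $h$, with the single-path probability reduced to $\P(U_0<U_1+\theta<\cdots<U_h+h\theta)$. Your justification of that reduction via $d(\rho,x_j)-d(\rho,x_{j-1})\le 1$ is correct; the paper leaves this step implicit. Your sign-fixing count $2^n n^h$ for non-backtracking $\ell^q$ paths is also correct, and amounts to the paper's union bound over the $2^n$ orthants, each of out-degree $n$.

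\textbf{The gap is the boundary case.} The statement includes $\theta=\frac{1}{(\Delta-1)e}$, and $\theta=\frac{1}{ne}$ in (i). Your main estimate $\P(\cdot)\le c_2m_c(\theta)^{-h}\le c_2(e\theta)^h$ from Corollary~\ref{cor:spectral_prob_est} only gives an expected count of at most $\frac{\Delta}{\Delta-1}c_2$ there, which does not tend to $0$. Neither of your proposed fixes is carried out.
\begin{itemize}
\item The strict inequality $m_c(\theta)>1/(e\theta)$ is not stated in the paper. It can be derived, for instance by combining the corollary's lower bound with Proposition~\ref{prop:path_upper_bd}, but at that point the corollary is superfluous.
\item A bound of the form $(e\theta)^h\,\mathrm{poly}(h)$ is the wrong direction: you need a factor that decays in $h$.
\end{itemize}

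\textbf{The fix is the paper's argument.} Bypass the corollary and use Proposition~\ref{prop:path_upper_bd} directly: $\P(U_0<U_1+\theta<\cdots<U_h+h\theta)\le (1+\theta h)^{h+1}/(h+1)!$. Combined with $(h+1)!\ge ((h+1)/e)^{h+1}\sqrt{2\pi(h+1)}$, this gives
\[
\E[\#\text{increasing paths of length } h]\le \frac{\Delta}{(\Delta-1)^2}\cdot\frac{1}{\sqrt{2\pi(h+1)}}\left(\frac{e(\Delta-1)}{h+1}+e(\Delta-1)\theta\right)^{h+1}.
\]
When $e(\Delta-1)\theta=1$, the last factor equals $\bigl(1+\frac{e(\Delta-1)}{h+1}\bigr)^{h+1}\le e^{e(\Delta-1)}$, so the $h^{-1/2}$ prefactor sends the bound to $0$. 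Replacing $\Delta(\Delta-1)^{h-1}$ by $2^n n^h$ handles (i) in the same way. The polytope-volume bound $(1+h\theta)^h/h!$, which you assert without proof, also follows from Proposition~\ref{prop:path_upper_bd}, since $(1+\theta h)/(h+1)\le 1$; it would suffice here as well.
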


\begin{remark}
In fact the proof of Proposition \ref{prop:lattice_no_perc} works for any \emph{directed} graph with out-degree bounded by $\Delta-1$. This is what we will use to prove part (i) above.
\end{remark}

We now turn to the more difficult task of showing that $\theta'_c<1$. Clearly $\theta'_c$ is decreasing in the dimension $n$, and clearly if $\theta'_c<1$ when considering only non-backtracking paths, then $\theta'_c<1$ when allowing all simple paths. Thus it suffices to consider the case of non-backtracking paths in $\mathbb{Z}^2$.

As mentioned above, the simplest choice is the graph distance $d(x,y) = \|x-y\|_1$, in which case we can bound $\theta_c$ from above via a straightforward coupling with oriented site percolation; see below for a brief introduction to oriented percolation. The proof of this result is essentially the same as in \cite{duque2023rmf} and \cite{hegarty2014existence}.

\begin{proposition}\label{prop:lattice_perc_l1}
For any $n\ge 2$, there is Rough Mount Fuji accessibility percolation with positive probability on $\mathbb{Z}^n$ with the $\ell^1$ (graph) distance, when $\theta>p^\circ_c$, where $p^\circ_c < 1$ is the critical parameter for oriented site percolation on $\mathbb{Z}^2$. Thus $\theta'_c \le p^\circ_c$. This holds in the case of non-backtracking paths and therefore also when considering all simple paths.
\end{proposition}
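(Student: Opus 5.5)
Since $\theta'_c$ is non-increasing in $n$, and an accessible non-backtracking path is also an accessible simple path, I would work only in the two-dimensional quadrant $\{x\in\mathbb{Z}^n : x_3=\dots=x_n=0,\ x_1,x_2\ge 0\}$ with non-backtracking paths. The monotonicity in $n$ holds because $\mathbb{Z}^2$ embeds in $\mathbb{Z}^n$ isometrically for $\ell^1$, with i.i.d.\ labels. In this quadrant every step to $x+e_1$ or $x+e_2$ raises the $\ell^1$ distance by exactly one. So along any oriented (up/right) path $x_0=0,x_1,x_2,\ldots$ we have $d(0,x_k)=k$, and the label sequence is $X_{x_k}=U_{x_k}+k\theta$. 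The path is increasing if and only if $U_{x_{k+1}}+\theta>U_{x_k}$ for every $k$.

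The coupling declares a site $v\neq 0$ \emph{open} if $U_v<\theta$, and also declares the origin open. Sites are then open independently, each with probability $\theta$. Suppose there is an infinite oriented path of open sites from $0$. Its first step satisfies $U_{x_1}+\theta>\theta>U_{x_1}$, so we need $X_{x_1}>X_0$, i.e.\ $U_{x_1}+\theta>U_0$. Here the root is special, so I would condition on the origin's label being at most $\theta$, or simply note that $X_0=U_0<1\le$ something is not needed. Instead, the cleanest route is to require every site on the path, including the root, to satisfy $U_v<\theta$.

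With that requirement, any two consecutive sites satisfy $U_{x_k}<\theta<U_{x_{k+1}}+\theta$, so the path is accessible. The event that the origin has $U_0<\theta$ and that an infinite open oriented path starts from it is the oriented site percolation event at parameter $\theta$ with the root's own state included. When $\theta>p^\circ_c$ this event has positive probability, since the origin percolates with positive probability in supercritical oriented site percolation on $\mathbb{Z}^2$. Hence RMF accessibility percolation occurs with positive probability, giving $\theta'_c\le p^\circ_c$. By the first paragraph this covers both path classes and all $n\ge2$.

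The main point to check is the definition of $p^\circ_c$: the standard definition is as the threshold for the open cluster of the origin (with the origin itself required open) to be infinite with positive probability, and the argument above matches it exactly. The fact that $p^\circ_c<1$ is classical (for example by a Peierls contour argument) and can be cited. I expect no real obstacle. The only subtlety is whether $p^\circ_c$ is a strict threshold, which is why the statement needs the strict inequality $\theta>p^\circ_c$.
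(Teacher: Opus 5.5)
Your proposal is correct and is essentially the paper's proof. You declare $v$ open when $U_v<\theta$, with the root included. Because the $\ell^1$ distance increases by exactly one along each up/right step, an open site has a smaller label than its up/right neighbour. You then identify an infinite open oriented path from $0$ with supercritical oriented site percolation at parameter $\theta$.

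One edit: delete the abandoned detour about declaring the origin open unconditionally. That version would not guarantee $X_{x_1}>X_0$, whereas your final version, which also requires $U_0<\theta$, does.
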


Oriented (site) percolation is a variation of Bernoulli percolation where each edge is assigned a direction, usually specified to be away from the origin. For example, in two dimensions, for each $x,y\in \mathbb{Z}_+$, we consider the directed edges from $(x,y)$ to $(x+1,y)$ and to $(x,y+1)$. We then let each site be open independently with probability $p\in[0,1]$ and ask for the probability that there exists an infinite open path of directed edges starting from $0$. (We can also define oriented bond percolation by opening each directed edge independently with probability $p$, rather than each site.) As for standard Bernoulli percolation, it is known that there is a non-trivial critical value $p^\circ_c$ such that for $p>p^\circ_c$ there is positive probability of an infinite open directed path starting from $0$, whereas for $p<p^\circ_c$, no infinite open directed paths exist almost surely. In two dimensions, it is known \cite{balister1993upper} that $p^\circ_c \le 0.72599$. We do not expect $\theta_c$ (or $\theta'_c$) to be near $p^\circ_c$ in practice. See \cite{duque2023rmf} for simulations suggesting that for non-backtracking paths with the graph distance, $\theta_c\approx 0.33$.

As indicated above, while a straightforward coupling with oriented site percolation allows us to prove non-triviality of the phase transition when using the $\ell^1$ (graph) distance, if we wish to use the $\ell^q$ distance for $q>1$, we require further work. This is due to the fact that neighbouring vertices can have arbitrarily small difference in their distances to the origin: for example $\|(x,0)\|_q = x$ and $\|(x,1)\|_q \approx x + x^{1-q}/q = x + o(1)$ when $x$ is large. This means that, recalling \eqref{eq:RMF}, the labels when moving from one vertex to a neighbour do not increase by $\theta$ on average, but rather by an amount that depends on their position in the graph. This breaks the coupling used to prove Proposition \ref{prop:lattice_perc_l1}. Nonetheless we are able to show non-triviality in this more difficult setting. We give a brief sketch of the proof below.

\begin{proposition}\label{prop:lattice_perc_l2}
    For any $q\in(1,+\infty]$, there is Rough Mount Fuji accessibility percolation with positive probability on $\mathbb{Z}^2$ with non-backtracking paths and the $\ell^q$ distance for all sufficiently large $\theta<1$. In particular $\theta'_c<1$.
\end{proposition}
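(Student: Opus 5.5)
Since $\theta$ can be taken close to $1$, I would couple with a supercritical oriented site percolation on a coarse-grained (renormalised) lattice. The difficulty is that along a lattice step the increment $\|y\|_q-\|x\|_q$ can be tiny: on the axis it is $\approx x^{1-q}/q$ for a sideways step. So we cannot require \emph{every} step to increase the label. Instead we restrict to a cone of directions where steps are uniformly good. We work in the quadrant and restrict to paths that stay in a sector $S=\{(x,y): x,y\ge 0,\ \delta \le y/x\le 1/\delta\}$ away from the axes. Inside this sector, steps $(1,0)$ and $(0,1)$ both increase $\|\cdot\|_q$. By convexity and homogeneity, $\|x+e_i\|_q-\|x\|_q$ is close to $\partial_i\|x\|_q=(|x_i|/\|x\|_q)^{q-1}$ (for $q=\infty$ we use the analogous bound directly). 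This is bounded below by some $c(\delta,q)>0$ uniformly once $\|x\|$ is large; near the origin we handle the finitely many sites separately. Hence a step between neighbours $u\to v$ in $S$ satisfies $\theta(d(0,v)-d(0,u))\ge \theta c$. The step is accessible whenever $U_v > U_u - \theta c$.

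The trouble is that $c<1$, so even as $\theta\to1$ a single step is not accessible with probability close to $1$ unless $U_u$ is small. The plan is therefore a block argument. Define a ``good'' vertex $v$ as one with $U_v<\eta$ for a small $\eta$. If $u$ and $v$ are both good, then $U_v+\theta d(0,v)>U_u+\theta d(0,u)$ once $\theta c>\eta$. Conversely, a good path needs every site good, which has probability $\eta$ per site, too small for percolation. So I would instead use the positive-drift structure more cleverly. Along a monotone path the labels $U$ need only decrease by less than $\theta\cdot$(increment) at each step. Choose blocks of side $L$: a block is open if, between its entry and exit corners, there is a monotone path whose $U$-values all lie in the window required. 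Concretely, we require all sites in the block to have $U$ below some level profile adapted to the increments. More simply, take $\theta$ so close to $1$ that in $S$ every step along direction $(1,1)$ of two lattice steps increases $\|\cdot\|_q$ by at least $2c'$ with $c'$ close to $\frac{1}{2}\cdot 2^{1/q}\cdot$... This is where care is needed. On the diagonal the increment per step is $2^{1/q-1}<1$, so labels must compensate. The natural remedy is to \emph{rotate}: use oriented percolation on the lattice generated by steps $(1,0)$ and $(0,1)$ restricted to a thin cone around a direction where one coordinate step gives increment close to $1$. Near the $x$-axis, steps $(1,0)$ give increment $\to 1$, but $(0,1)$ steps give increment $\to 0$.

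So the key construction is as follows. We use renormalised oriented percolation where each ``macro-step'' consists of many $(1,0)$ steps followed by one $(0,1)$ step. This occurs within a cone $y\le \delta x$ whose slope is small but positive, so the path never reaches the axis, where $q>1$ makes the increment of $(0,1)$ very small but positive. A macro-step from $u$ to $u+(k,0)$ or $u+(k,1)$ then succeeds with probability at least $1-\epsilon(\theta)$, where $\epsilon\to0$ as $\theta\to1$, using $\theta k$-increments swamping the uniform noise. These macro-sites form an oriented lattice with finite-range dependence (they share sites only with neighbours). By the Liggett--Schonmann--Stacey domination, or directly by a Peierls contour argument as in the proof of Proposition~\ref{prop:lattice_perc_l1}, this percolates for $\epsilon$ small. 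The main obstacle I expect is controlling the single $(0,1)$ step, whose increment is only $\approx (y/x)^{q-1}\|x\|$-dependent. I would handle this by requiring at each macro-junction that the site after the vertical step has a small $U$-value, or by choosing the vertical step only when $U_u$ is small. Such a choice exists among the $k$ candidate columns with high probability. The second obstacle is ensuring that the macro-path stays inside the cone, which holds because the oriented percolation cluster grows within a linear cone.
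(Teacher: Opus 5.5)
Your overall architecture matches the paper's. You work in a thin cone along the horizontal axis, where horizontal steps are almost surely accessible for $\theta$ near $1$. You make each vertical move at one of many candidate columns, each of which works with probability at least $1/2$. You then renormalise to supercritical oriented percolation. The paper's bricks play the role of your macro-steps: a brick is good when all its horizontal edges are open and at least one of $n/4$ candidate vertical edges is open in each of its two layers.

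\textbf{The gap.} Your central estimate is false as stated, and the ingredient that makes a correct version true is missing. Accessibility is a condition on every consecutive pair, so the total increment $\theta k$ of a macro-step is irrelevant; what matters is each single step.
\begin{itemize}
\item A horizontal step from $(a,b)$ has increment $1-s$, with $s>0$ of order $(b/a)^q$. It therefore fails with probability $(1-\theta(1-s))^2/2\to s^2/2\neq 0$ as $\theta\to1$.
\item A vertical step at a prescribed column fails with probability about $1/2$, whatever $\theta$ is.
\end{itemize}
Hence for fixed $k$ you do not have $\eps(\theta)\to0$. Worse, $k$ must be large for the choice of column to be reliable, and then about $k$ horizontal steps must succeed simultaneously. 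If the path could sit anywhere in a cone $y\le\delta x$ with $\delta$ fixed, these steps would fail with probability of order $k\delta^{2q}$, which is not small. So the cone width has to shrink with the block length. This balance is exactly what the paper arranges: the slope is at most $5/n$ in bricks of length $n$, so that $(1-6\cdot 5^q/n^q)^{2n}\to1$, and this is where $q>1$ is used. Nothing in your proposal addresses it.

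\textbf{The repair.} It is available within your framework.
\begin{itemize}
\item Macro-steps $(k,0)$ and $(k,1)$ confine the path to $y\le x/k$ automatically; this has nothing to do with the shape of percolation clusters.
\item Restrict the vertical move to the $k/2$ columns in the second half of each block. Then every horizontal step that can be used starts from a point $(a,b)$ with $b\le 2a/k$.
\item By convexity of $t\mapsto\|(t,b)\|_q$, the increment is then at least $(1+(b/a)^q)^{-(1-1/q)}\ge 1-(2/k)^q$, and it is exactly $1$ if $q=\infty$.
\item A union bound gives failure probability at most $k\bigl((1-\theta)+(2/k)^q\bigr)^2+2^{-k/2}$ for each macro-edge. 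The last term comes from the independent events $\{U_{(c,i)}<U_{(c,i+1)}\}$ over disjoint column pairs.
\item Take $k$ large and then $\theta\ge1-k^{-q}$, so the bound is $O(k^{1-2q})+2^{-k/2}$. Macro-edges are finitely dependent, so Liggett--Schonmann--Stacey finishes the argument.
\end{itemize}
Of your two suggestions for the vertical step, only choosing the column where $U$ at the tail is small works. Asking for a small $U$ at the \emph{head} of the vertical step works against you.

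\textbf{Comparison once repaired.} Your route then differs from the paper's mainly in using the accessibility events directly, which forces you to invoke the domination theorem. The paper instead uses an auxiliary coupling: a horizontal edge is open iff $U_{\text{tail}}$ lies in a symmetric middle interval, and a vertical edge is open iff $U_{\text{tail}}<U_{\text{head}}$. This makes the brick events independent. Because your convexity bound holds at every point of the cone, including near $0$, you also avoid the paper's separate handling of sites close to the origin.
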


Since this proposition holds in the case of non-backtracking paths on $\mathbb{Z}^2$, we therefore also have $\theta'_c<1$ on $\mathbb{Z}^n$ for any $n\ge 2$, when considering either non-backtracking or all simple paths. Theorem \ref{thm:non_trivial_phase_transition} therefore follows immediately from Propositions \ref{prop:lattice_no_perc}, \ref{prop:lattice_perc_l1} and \ref{prop:lattice_perc_l2}.

As mentioned above, the coupling with oriented percolation used to prove Proposition \ref{prop:lattice_perc_l1} cannot work for Proposition \ref{prop:lattice_perc_l2} since distances do not increase by $1$ with each step along a path; in fact in the $q=+\infty$ case, there are many edges with no increase in distance at all. Our strategy to avoid this issue is somewhat counterintuitive. As we saw above, for large (but finite) $q$, arbitrarily small distances between neighbours occur close to the axes. Indeed, when we disallow backtracking, it is simply not possible to see accessible paths near the axes; see Figure \ref{fig:latticenobacksteps}. However, from a theoretical standpoint, it is not even clear that there are infinite accessible paths near the diagonal when $\theta=1$, whereas along each axis, we clearly do have at least one accessible path when $\theta=1$, and it is this property that draws us, in the proof, to restrict our class of paths to those remaining within a cone near the horizontal axis. This preference for paths near the axes when $q$ is large can also be seen in simulations when backtracking is allowed---see Figure \ref{fig:latticebacksteps}---and certainly the difficulty of following paths near the diagonal can also be observed when backtracking is not allowed---see Figure \ref{fig:latticenobacksteps}.

By choosing $\theta$ sufficiently close to $1$, we can then construct a coupling with a Bernoulli percolation model where all \emph{horizontal edges} are open with probability arbitrarily close to $1$. This is not true for vertical edges, but for each vertical edge in the north-east quadrant, there is probability at least $1/2$ of the label of the vertex at the top of the edge being larger than the label of the vertex at the bottom of the edge. Thus we construct an inhomogeneous edge percolation model; but even then, the vertical edges are not independent. To circumvent this final difficulty we use a two-scale method, creating long, thin ``bricks'' which we show contain suitable open paths with high probability.

To summarise the proof strategy,
\begin{itemize}
\item we consider a coupling with oriented percolation, in line with Proposition \ref{prop:lattice_perc_l1};
\item we restrict to a narrow cone near the horizontal axis, so that horizontal edges have probability close to $1$ of being open and vertical edges have probability at least $1/2$ of being open;
\item and we use a two-scale method to manage the inhomogeneity and the dependence of vertical edges.
\end{itemize}

We do not expect either the lower bounds given by Proposition \ref{prop:lattice_no_perc} or the upper bounds given by Propositions \ref{prop:lattice_perc_l1} or \ref{prop:lattice_perc_l2} to be close to the true critical RMF accessibility percolation threshold. Simulations suggest that for non-backtracking paths with the graph distance, $\theta_c\approx 0.33$: see \cite{duque2023rmf} and Figure \ref{fig:latticenobacksteps}.

\subsection{Motivation}

Accessibility percolation was introduced by Nowak and Krug \cite{nowak2013accessibility} as a model for evolution where potential genotypes or phenotypes are represented as the vertices of a graph. Each vertex has an associated fitness, corresponding to the ability of a living organism to survive and reproduce. Mutations in the species' DNA correspond to moving along an edge to a neighbouring vertex; and on large time scales, it is assumed that only species whose fitnesses are greater than their predecessors can fixate in the population. The focus therefore lies on identifying paths of vertices whose fitness values progressively increase; these paths can be seen as viable evolutionary trajectories.

Different biological hypotheses then correspond to different \emph{fitness landscapes}, or distributions for the collection of random variables that capture the fitnesses of the genotypes or phenotypes. The simplest possibility is to assume that the fitnesses are independent and identically distributed. This is called the House of Cards (HoC) model \cite{kingman1978simple}, and most mathematical works on accessibility percolation have used this assumption due to its tractability. From the biological perspective, the HoC model implies that the effect of each and every mutation on fitness is so significant that it totally disrupts the underlying genotype’s fitness architecture, analogous to the collapse of a house of cards.

In this paper, we consider a model where fitness values are not i.i.d.~but are influenced by both new mutations and preceding genotypes; there is a deterministic bias or evolution-oriented drift over time/generation. This is called the Rough Mount Fuji model, reflecting the fact that fitnesses ascend at a constant rate on average over time/generation, but with significant noise around this gradient. This model was also considered by Nowak and Krug \cite{nowak2013accessibility} but has received less attention, especially in the mathematical literature, due to the more complex fitness landscape.

\subsection{Existing and related work}

As mentioned above, accessibility percolation was first considered by Nowak and Krug in \cite{nowak2013accessibility}. They gave bounds on the length of increasing paths in the house of cards (HoC) model on regular $n$-ary trees, in particular showing that the probability of seeing a path of length $h$ converges to $0$ when $n,h\to\infty$ provided that $n/h\le 1/e$. They also identified the accessibility percolation threshold for a specific case of the rough Mount Fuji model with non-uniform noise, again on regular trees.

Roberts and Zhao \cite{roberts_zhao:increasing_paths_regular_trees} improved on the result of Nowak and Krug for the HoC model. They again considered a regular $n$-ary tree, for which every vertex except the root is labelled with an independent and identically distributed continuous random variable. They showed that if $n = \lfloor \alpha/ h\rfloor$ for some fixed $\alpha > 1 / e$, then the probability that there exists an increasing path of length $h$ from the root converges to 1 as $h \rightarrow \infty$, complementing the result of \cite{nowak2013accessibility} that the probability converges to 0 if $\alpha \le 1 / e$. Chen \cite{chen2014increasing} went further and identified the critical length of increasing paths in the $n$-ary tree to be $en-\frac{3}{2} \log n$.

Hegarty and Martinsson \cite{hegarty2014existence} analysed accessibility percolation on the unit hypercube of dimension $n$, considering non-backtracking paths only, and allowing fairly general distributions for the random variables used to generate the fitnesses. They considered both HoC and RMF models; in fact, a version of the HoC model in which the all-ones vertex has fixed fitness 1, and the all-zeroes vertex is given either a fixed fitness or an i.i.d.~fitness. They showed that as $n$ tends to infinity, the probability of finding at least one accessible path from the all-zeroes vertex to the all-ones vertex converges to 0 for their HoC model with random fitness at $0$; and that the probability goes sharply from $0$ to $1$ as the fixed fitness at $0$ passes $\frac{\log n}{n}$. They also showed that the probability of finding an accessible path converges to $1$ for the RMF model (where the all-zeroes and all-ones vertices both also have random fitnesses) provided $\theta=\theta(n)\gg 1/n$.

Duque et al.~\cite{duque2023rmf} also investigated RMF accessibility percolation with non-backtracking paths. They proved a slight generalisation of the Hegarty and Martinsson \cite{hegarty2014existence} result on the hypercube, allowing for even more general fitness distributions. They also used a coupling with oriented percolation to give an upper bound on the RMF accessibility percolation threshold $\theta_c$ on the square lattice and another related lattice, both using the graph distance. They also conducted simulations to estimate the percolation threshold for RMF in 2-ary and 3-ary trees, the square lattice and the related lattice, suggesting that the critical values are in the intervals $[0.2,0.22]$, $[0.12,0.14]$, $[0.3,0.33]$ and $[0.19,0.22]$ respectively.

Berestycki, Brunet and Shi \cite{berestycki2016nobacksteps, berestycki2017backsteps} carried out a more in-depth investigation of HoC accessibility on the hypercube, improving on the methods of Hegarty and Martinsson \cite{hegarty2014existence}. In \cite{berestycki2016nobacksteps} they showed that when considering non-backtracking paths only, if the fitness at $0$ is set to equal $x/n$, then the number of accessible paths from $0$ to $1$ rescaled by $n$ converges in distribution to $e^{-x}$ times the product of two independent exponential random variables. In \cite{berestycki2017backsteps} they allowed paths to backtrack and gave bounds on the probability that an accessible path exists.

The question of finding the longest increasing path, when independent random variables are assigned to each edge of a graph, has also been studied in computer science, in the context of \emph{temporal networks} \cite{angel2020long, broutin2024increasing, casteigts2024sharp}. The idea is that we have some underlying graph whose edges are opened at random times, the key question being which vertices can be reached by following edges that appear in increasing order. Several papers have considered this problem on the Erd\H{o}s-R\'enyi graph $G(n,p)$, often concentrating on the case $p\asymp\frac{\log n}{n}$ which turns out to be the threshold for connectivity of the (accessibility percolated) graph \cite{broutin2024increasing, casteigts2024sharp}. Angel \emph{et al.}~\cite{angel2020long}~gave an asymptotically tight characterisation of the length of the longest increasing path for Erd\H{o}s-R\'enyi graphs when the edge probability $p$ is small; they used a very similar argument to that in \cite{roberts_zhao:increasing_paths_regular_trees} to bound the probability of long accessible paths in trees.

\subsection{Open problems and future work}

There is no reason why the noise in \ref{eq:RMF} needs to consist of independent and identically distributed Uniform$(0,1)$ random variables. Indeed, as mentioned above, Nowak and Krug \cite{nowak2013accessibility} described a setting where the probability $\P(U_0<U_1+\theta<\ldots<U_h+h\theta)$ that a single path is increasing can be calculated exactly. It may be interesting to consider whether our proofs can be extended to cover a wider class of distributions.

Corollary \ref{cor:spectral_prob_est} gave explicit bounds on $m_c(\theta)$ in the tree case, and thus on $\P(U_0<U_1+\theta<\ldots<U_h+h\theta)$. However, there is a gap between the upper and lower bounds, and in particular we expect that the lower bound is tight as $\theta\to 0$. Can one obtain better explicit bounds, perhaps improving the upper bound by using a more delicate argument than the symmetrization in the proof of Proposition \ref{prop:perco_GW}?

When we allow backtracking paths of $\mathbb{Z}^d$, the system is not monotone in $\theta$, as highlighted in Figure \ref{fig:counterexample}. However, it seems natural to expect the probability of accessibility percolation (i.e.~of infinite increasing paths) to be monotone, and thus to expect $\theta_c=\theta'_c$. Is this the case? Moreover, can one find better bounds on $\theta_c$ and/or $\theta'_c$?

Finally, we plan to investigate RMF accessibility percolation on other graphs, specifically the hypercube and Erd\H{o}s-R\'enyi graphs. As mentioned above, Hegarty and Martinsson \cite{hegarty2014existence} showed that the probability of finding an accessible path from $0$ to $1$ converges to $1$ provided that $\theta=\theta(n)\gg 1/n$. We expect to improve on this by showing that the critical $\theta$ is of order $1/n$. In the Erd\H{o}s-R\'enyi case, we are not aware of any work with RMF labels. Most work has concentrated on the House of Cards model on $G(n,p)$ where labels are on the edges and $p$ is of order $\frac{\log n}{n}$. In the RMF case we expect interesting behaviour on $G(n,p)$ when $p$ crosses the ``double jump'' critical point $p=1/n$.

\subsection{Layout of article}

We begin, in Section \ref{sec:onepath}, with an explicit and direct upper bound on the probability that one path of length $h+1$ has increasing RMF labels. In Section \ref{sec:explicit_bounds} we then apply this with a simple first moment bound in Section  to show that there is no accessibility percolation on trees when $\theta$ is sufficiently small, proving Theorem \ref{thm:no_perco_small_theta} and Proposition \ref{prop:no_perco_GW}. In the same section, we also apply a multitype branching process argument with a discretisation of the vertex labels to show that there is accessibility percolation on Bienaym\'e-Galton-Watson trees when $\theta$ is sufficiently large, proving Proposition \ref{prop:perco_GW}.

In Section \ref{sec:characterisation} we use a martingale approach to give an exact characterisation of the critical value on Bienaym\'e-Galton-Watson trees, proving Theorem \ref{theomultitype}. Then in Section 5, we move on to investigating the lattice $\mathbb{Z}^n$, proving Theorem \ref{thm:non_trivial_phase_transition}. We first carry out the straightforward parts of our lattice proofs, showing that there is no accessibility percolation for small $\theta>0$, and demonstrating existence of accessibility percolation for $\theta$ sufficiently close to $1$ in the easier $\ell^1$ case. Finally, in Section \ref{sec:bricklayer}, we introduce the most novel part of our paper: the two-scale coupling that allows us to show accessibility percolation for $\theta$ sufficiently close to $1$ for the $\ell^q$ distance with $q>1$.

\section{Upper bound on the probability of one path being increasing}\label{sec:onepath}
In this section, we will provide an upper bound on the probability that the labels along one single path are increasing. We can then use this probability in a first moment method to prove that there is no Rough Mount Fuji accessibility percolation on general trees when $\theta$ is sufficiently small. This will give us a non-trivial lower bound on the location of the phase transition for RMF accessibility percolation on trees.

For this section, let $G$ be a semi-infinite path consisting of vertices $0,1,2,3,\ldots$ (with the root $\rho=0$) and let $X_j$ be the label attached to vertex $j$, so that, in line with \eqref{RMF},
\[X_j= U_j + \theta j,\]
where $\{U_j: j\ge 0\}$ are independent and identically distributed Uniform$(0,1)$ random variables and $\theta\in [0,1]$. For a path to be accessible up to height $h$, we need that $X_0<X_1< X_2<\ldots < X_h$, which is equivalent to $U_0 < U_1+\theta < U_2+ 2\theta < \ldots < U_h + h\theta$. Our main aim in this section is to prove an upper bound on the probability of this event.

\begin{proposition}\label{prop:path_upper_bd}
    For any $h\in\mathbb{N}$ and $\theta\in[0,1]$,
    \[\mathbb{P}\left(U_0< U_1+ \theta < U_2 + 2\theta<\ldots < U_h +h\theta\right) \le \frac{(1+\theta h)^{h+1}}{(h+1)!}.\]
\end{proposition}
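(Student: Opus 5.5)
The plan is to relate the event to a volume computation. Set $Y_j = U_j + \theta j$, so that $Y_j$ is uniform on $[\theta j, \theta j + 1]$ and the $Y_j$ are independent. Each $Y_j$ lies in the interval $[0, 1+\theta h]$. The probability we want is
\[\int \prod_{j=0}^h \ind\{\theta j\le y_j\le \theta j+1\}\,\ind\{y_0<y_1<\cdots<y_h\}\,dy_0\cdots dy_h,\]
since each $Y_j$ has density equal to $1$ on its interval. This integral is just the Lebesgue volume of a subset of the ordered simplex
\[\Delta_h=\{0\le y_0<y_1<\cdots<y_h\le 1+\theta h\}.\]
Dropping all the indicator constraints except $0\le y_j\le 1+\theta h$ and the ordering, the volume is at most $\mathrm{vol}(\Delta_h) = (1+\theta h)^{h+1}/(h+1)!$, which is exactly the stated bound.

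The key steps, in order, are as follows.
\begin{enumerate}
\item Check that each interval $[\theta j,\theta j+1]$ is contained in $[0,1+\theta h]$ for $0\le j\le h$. This is immediate from $\theta\ge 0$.
\item Write the probability as the integral above, using independence and the fact that each density is $1$ on an interval of length $1$.
\item Bound the indicator of the product of intervals by the indicator of the cube $[0,1+\theta h]^{h+1}$.
\item Compute the volume of the ordered region inside that cube. By symmetry over the $(h+1)!$ orderings, it is $(1+\theta h)^{h+1}/(h+1)!$.
\end{enumerate}

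There is no real obstacle here. The only point needing care is that the densities are exactly $1$ rather than normalised differently, so the volume bound gives the probability bound with no extra constant; ties have measure zero, so strict inequalities cause no issue.
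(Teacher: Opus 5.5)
Your proof is correct, and it takes a genuinely different and much shorter route than the paper. You use that $(Y_0,\dots,Y_h)$ has joint density $\prod_j \ind_{[\theta j,\theta j+1]}(y_j)\le \ind_{[0,1+\theta h]^{h+1}}(y)$. So the probability is at most the volume of the ordered region of the cube $[0,1+\theta h]^{h+1}$, which is exactly $(1+\theta h)^{h+1}/(h+1)!$.

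The paper's argument has three stages:
\begin{enumerate}
\item It decomposes the event according to the set of ``out of order'' indices $i$, namely those with $U_i\ge U_{i+1}$ but $U_i<U_{i+1}+\theta$.
\item It proves a recursive integral inequality (Lemma \ref{lemmapreinductionnegative}, resting on the integration-by-parts estimates of Lemma \ref{lem:integrals}). This bounds each configuration with $n$ such indices by the alternating sum $\sum_{j=0}^n\binom{n}{j}(-1)^{n-j}(1+j\theta)^{h+1}/(h+1)!$ (Proposition \ref{proposition6.3}).
\item It sums over the $\binom{h}{n}$ configurations. A binomial identity, in effect Newton's forward-difference formula, collapses everything to the single term $(1+\theta h)^{h+1}/(h+1)!$.
\end{enumerate}

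The two routes land on exactly the same bound, and the per-configuration estimates are used nowhere else in the paper. Your containment argument therefore loses nothing and replaces the whole of that machinery. It also makes clear where the bound comes from. It adapts immediately to other noise: for independent noise supported in $[0,1]$ with density at most $K$, the same argument gives $K^{h+1}(1+\theta h)^{h+1}/(h+1)!$. This is relevant to the paper's stated interest in more general noise distributions. The only extra content in the paper's approach is finer information about individual out-of-order patterns, which it does not exploit.
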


Our strategy is to consider two possibilities for each of the labels: for some values of $j$, the uniform random variables will be ``in order'', by which we mean that $U_j<U_{j+1}$; whereas for some, the uniforms will be ``out of order'', i.e.~$U_j\ge U_{j+1}$, and yet still $U_j < U_{j+1}+\theta$.

More precisely, setting $i_0=-1$ and specifying $i_1,\ldots,i_n\in\{0,\ldots,h-1\}$ to represent the ``out of order'' uniforms, we would like to estimate
\[\mathbb{P}\left(\bigcap\limits_{k=1}^{n}\{U_{i_k}\in [U_{i_k+1},U_{i_k+1}+\theta)\}\cap \bigcap\limits_{\ell=1}^{n}\{U_{i_{\ell-1}+1}<\ldots <U_{i_\ell}\} \cap \{ U_{i_n+1}<\ldots<U_h\}\right).\]
Note that if $i=j$, then we use the convention that $\{U_i<\ldots <U_j\}=\Omega$ where $\Omega$ is the full sample space.

To labour the point, this event is saying that $U_0<\ldots<U_{i_1}$, i.e.~the first $i_1$ uniforms are in order; then $U_{i_1}\in [U_{i_1+1},U_{i_1+1}+\theta)$, i.e.~$U_{i_1}$ is then larger than, but within $\theta$ of, the following uniform. Then the following uniforms are in order again until $U_{i_2}$, which is larger than but within $\theta$ of $U_{i_2+1}$. And so on.  

In order to estimate this probability from above, we will work by induction on $n$, ``peeling off'' the last ``out of order'' uniform by integrating out the values of $U_{i_n}$ and $U_{i_n+1}$ to obtain a probability involving only $n-1$ ``out of order'' uniforms. The following lemma essentially does this, although the statement looks somewhat abstract.

For $i_0=-1$ and specified $i_1<\ldots<i_n\in\{0,\ldots,h-1\}$, let
\[A_k := \{U_{i_{k-1}+1}<\ldots <U_{i_k} \}\]
be the event that the uniforms between $i_{k-1}$ and $i_k$ are in increasing order, and let
\[B_k := \{U_{i_k}\in [U_{i_k+1},U_{i_k+1}+\theta)\}\]
be the event that the $i_k$th uniform is greater than but within $\theta$ of the following uniform.

\begin{lemma}
\label{lemmapreinductionnegative}
    Let $n \in \mathbb{N}$, $h\geq n+1$ and $c \in[0,\infty)$. Let $i_0=-1$ and $i_1<\ldots<i_n\in\{0,\ldots,h-1\}$. Define
\[p_n^{(h)}(c):= \int\limits_0^1\int\limits_0^1 \frac{(1-v_{n}+c)^{h-i_{n}-1}}{(h-i_{n}-1)!}\, \mathbb{P}\left(\bigcap_{k=1}^n A_k \cap \bigcap_{\ell=1}^n B_\ell \cap \{U_{i_{n}+1} \in dv_n,\, U_{i_n} \in du_n\}\right).\]
    Then for any $n\ge 2$,
    \[p_n^{(h)}(c) \leq p_{n-1}^{(h)}(c + \theta)- p_{n-1}^{(h)}(c).\]
\end{lemma}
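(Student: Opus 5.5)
The plan is to integrate out, one after the other, the two variables $v_n=U_{i_n+1}$ and $u_n=U_{i_n}$ that carry the last ``out of order'' pair. What remains should be exactly the integrand defining $p_{n-1}^{(h)}$, evaluated at $c+\theta$ minus the same evaluated at $c$. Write $k:=h-i_n-1$ and $j:=i_n-i_{n-1}-1$, and condition on all uniforms with index at most $i_{n-1}+1$. On $\bigcap_{k'<n}A_{k'}\cap\bigcap_{\ell<n}B_\ell$ this fixes $v_{n-1}=U_{i_{n-1}+1}$. The event $A_n$ then says $v_{n-1}<U_{i_{n-1}+2}<\ldots<U_{i_n}=u_n$. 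Integrating out the intermediate $j$ uniforms gives the density $\frac{(u_n-v_{n-1})^j}{j!}\ind_{\{v_{n-1}<u_n<1\}}$ for $u_n$. The variable $U_{i_n+1}$ is independent of everything else, and $B_n$ restricts it to $v_n\in(u_n-\theta,u_n]\cap[0,1]$.

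\emph{Step 1: the $v_n$ integral.} I drop the constraint $v_n\ge 0$. The constraint $v_n\le 1$ is automatic since $v_n\le u_n$. Because the integrand is positive, this can only increase the integral, and so
\[\int_{u_n-\theta}^{u_n}\frac{(1-v+c)^k}{k!}\,dv = g(u_n,c+\theta)-g(u_n,c),\qquad g(u,c):=\frac{(1-u+c)^{k+1}}{(k+1)!}.\]

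\emph{Step 2: the $u_n$ integral.} I set $F(c):=\int_{v_{n-1}}^{1}\frac{(u-v_{n-1})^j}{j!}g(u,c)\,du$, so that the conditional contribution is at most $F(c+\theta)-F(c)$. Extending the upper limit to $1+c$ turns $F$ into a Beta integral:
\[\tilde F(c):=\int_{v_{n-1}}^{1+c}\frac{(u-v_{n-1})^j}{j!}\frac{(1-u+c)^{k+1}}{(k+1)!}\,du=\frac{(1-v_{n-1}+c)^{j+k+2}}{(j+k+2)!}.\]
Here $j+k+2=h-i_{n-1}-1$, which is exactly the exponent appearing in $p_{n-1}^{(h)}$.

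\emph{Step 3: controlling the extension error (the main obstacle).} The bounds go in opposite directions for the two terms: $\tilde F(c+\theta)$ overestimates $F(c+\theta)$, which is good, but we subtract $F(c)$, and replacing it by $\tilde F(c)$ goes the wrong way. So I must show
\[F(c+\theta)-F(c)\le\tilde F(c+\theta)-\tilde F(c).\]
This is equivalent to
\[\int_1^{1+c+\theta}(u-v_{n-1})^j(1-u+c+\theta)^{k+1}\,du\ \ge\ \int_1^{1+c}(u-v_{n-1})^j(1-u+c)^{k+1}\,du .\]
To see this, substitute $w=u-\theta$ on the left. The left side becomes $\int_{1-\theta}^{1+c}(w+\theta-v_{n-1})^j(1-w+c)^{k+1}\,dw$. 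This is at least the right side, because its range of integration is larger and $(w+\theta-v_{n-1})^j\ge(w-v_{n-1})^j\ge 0$ on $[1,1+c]$.

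\emph{Step 4: reassembling.} The conditional contribution is now bounded by
\[\frac{(1-v_{n-1}+c+\theta)^{h-i_{n-1}-1}}{(h-i_{n-1}-1)!}-\frac{(1-v_{n-1}+c)^{h-i_{n-1}-1}}{(h-i_{n-1}-1)!}.\]
I integrate this against $\P\bigl(\bigcap_{k'<n}A_{k'}\cap\bigcap_{\ell<n}B_\ell\cap\{U_{i_{n-1}+1}\in dv_{n-1},\,U_{i_{n-1}}\in du_{n-1}\}\bigr)$. By definition this gives exactly $p_{n-1}^{(h)}(c+\theta)-p_{n-1}^{(h)}(c)$.

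The hypothesis $n\ge 2$ is what guarantees that $i_{n-1}\ge 0$ and that the pair $(u_{n-1},v_{n-1})$ exists. The only genuinely delicate point is Step 3, since the naive extension does not preserve the sign of a difference; the shift-by-$\theta$ comparison above is how I would handle it.
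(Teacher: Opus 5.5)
Your proof is correct in substance and has the same skeleton as the paper's. You integrate out $v_n$ over $(u_n-\theta,u_n)$ as in \eqref{integral1}; your inequality, rather than the paper's equality, correctly handles the constraint $v_n\ge 0$. You then integrate $u_n$ against the density $\frac{(u_n-v_{n-1})^{a}}{a!}$ of the intermediate order statistics, bound the result by a difference inequality of the form \eqref{integral3}, and recognise the integrand of $p_{n-1}^{(h)}$. You are right that applying \eqref{integral2} to the two terms separately would go the wrong way for the subtracted term.

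The one local difference is how that inequality is proved. The paper computes $\int_v^1\frac{(w-v)^a}{a!}\frac{(1-w+c)^b}{b!}\,dw$ exactly as the truncated binomial sum $\frac{1}{(a+b+1)!}\sum_{k=a+1}^{a+b+1}\binom{a+b+1}{k}c^{a+b+1-k}(1-v)^k$, and notes that the omitted terms $k\le a$ are nondecreasing in $c$. Your deficit $\tilde F-F$ is precisely those omitted terms. Your shift $w=u-\theta$ proves their monotonicity in $c$ directly, without computing the expansion.

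Two corrections are needed.

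First, your count of intermediate uniforms is off by one. The number of uniforms strictly between indices $i_{n-1}+1$ and $i_n$ is $i_n-i_{n-1}-2$, not $i_n-i_{n-1}-1$. With your definition of $j$ one gets $j+k+2=h-i_{n-1}$, whereas the exponent $h-i_{n-1}-1$ you need comes out only with $j=i_n-i_{n-1}-2$.

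Second, the case $i_n=i_{n-1}+1$ is missing. With the corrected $j$, your Steps 2--3 require $i_n\ge i_{n-1}+2$. When $i_n=i_{n-1}+1$, we have $A_n=\Omega$ and $u_n=U_{i_n}=U_{i_{n-1}+1}=v_{n-1}$ is forced, so $u_n$ has no density and there is nothing to integrate. This case is easier. Step 1 alone gives the bound $g(v_{n-1},c+\theta)-g(v_{n-1},c)$, whose exponent is $k+1=h-i_n=h-i_{n-1}-1$, and Step 4 then finishes. The paper treats this case separately for exactly this reason.
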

 
Before proving Lemma \ref{lemmapreinductionnegative} we state a more elementary lemma that will be needed in the proof.

\begin{lemma}
\label{lem:integrals}
For any $\theta\ge 0$, $c\in[0,\infty)$, $u\in\mathbb{R}$ and $a \in \mathbb{N}$,
    \begin{equation}\label{integral1}
        \int\limits_{u-\theta}^{u} \frac{(1-v+c)^a}{a!} dv =\frac{(1-u+\theta+c)^{a+1}}{(a+1)!} -\frac{(1-u+c)^{a+1}}{(a+1)!}
    \end{equation}
and for any $a, b \in \mathbb{N}$, $\theta>0$ and $v\in[0,1]$,
    \begin{equation}\label{integral2}
        \int\limits_{v}^{1}\frac{(w-v)^a}{a!}\frac{(1-w+c)^b}{b!} dw \leq \frac{(1-v+c)^{a+b+1}}{(a+b+1)!}.
    \end{equation}
For any $a, b \in \mathbb{N}$ and $v\in[0,1]$,
    \begin{equation}\label{integral3}
        \int\limits_{v}^{1}\frac{(w-v)^a}{a!}\left[\frac{(1-w+\theta+c)^b}{b!}-\frac{(1-w+c)^b}{b!}\right] dw \leq \frac{(1-v+\theta+c)^{a+b+1}}{(a+b+1)!}-\frac{(1-v+c)^{a+b+1}}{(a+b+1)!}.
        \end{equation}
\end{lemma}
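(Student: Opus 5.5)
The proof is a direct computation in each of the three parts of Lemma~\ref{lem:integrals}.

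\emph{Equation \eqref{integral1}.} This is exact. The map $v\mapsto -\frac{(1-v+c)^{a+1}}{(a+1)!}$ is an antiderivative of $\frac{(1-v+c)^a}{a!}$. Evaluating it between $v=u-\theta$ and $v=u$ gives
\[
\frac{(1-u+\theta+c)^{a+1}}{(a+1)!}-\frac{(1-u+c)^{a+1}}{(a+1)!}.
\]
No sign conditions on the base are needed, since we are only integrating a polynomial.

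\emph{Inequality \eqref{integral2}.} I use the Beta-integral (convolution) identity for monomials. Substitute $s=w-v$ and set $L=1-v+c$. The integrand becomes $\frac{s^a}{a!}\frac{(L-s)^b}{b!}$, with $s$ ranging over $[0,1-v]$. Since $c\ge 0$, we have $1-v\le L$, and for $s\in[0,L]$ the integrand is nonnegative. Therefore
\[
\int_0^{1-v}\frac{s^a}{a!}\frac{(L-s)^b}{b!}\,ds \le \int_0^{L}\frac{s^a}{a!}\frac{(L-s)^b}{b!}\,ds = \frac{L^{a+b+1}}{(a+b+1)!},
\]
where the final equality is the standard identity $\int_0^L s^a(L-s)^b\,ds = \frac{a!\,b!}{(a+b+1)!}L^{a+b+1}$. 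This identity follows by induction on $b$ using integration by parts, or directly from the Beta function.

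\emph{Inequality \eqref{integral3}.} This needs a little more care, because one cannot simply bound each of the two terms separately and subtract. Instead I write the bracket as an integral in an auxiliary variable $t$:
\[
\frac{(1-w+\theta+c)^b}{b!}-\frac{(1-w+c)^b}{b!} = \int_c^{c+\theta}\frac{(1-w+t)^{b-1}}{(b-1)!}\,dt \qquad (b\ge 1).
\]
The case $b=0$ is trivial, since then the bracket vanishes and the right-hand side is nonnegative because $1-v+c\ge 0$ and $\theta\ge 0$. Applying Fubini, the left-hand side of \eqref{integral3} becomes
\[
\int_c^{c+\theta}\int_v^1\frac{(w-v)^a}{a!}\frac{(1-w+t)^{b-1}}{(b-1)!}\,dw\,dt.
\]
For each $t\ge c\ge 0$, I apply \eqref{integral2} with $c$ replaced by $t$ and $b$ replaced by $b-1$. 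This bounds the inner integral by $\frac{(1-v+t)^{a+b}}{(a+b)!}$. Integrating this bound over $t\in[c,c+\theta]$ gives exactly
\[
\frac{(1-v+\theta+c)^{a+b+1}}{(a+b+1)!}-\frac{(1-v+c)^{a+b+1}}{(a+b+1)!}.
\]

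The only genuine subtlety is this representation of the difference as an integral in $t$. It keeps the inequality direction valid because the integrand in $t$ is nonnegative; nonnegativity holds since $w\le 1$ and $t\ge 0$ give $1-w+t\ge 0$. Everything else is routine calculus.
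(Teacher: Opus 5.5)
Your proof is correct. For \eqref{integral1} it coincides with the paper's; for \eqref{integral2} and \eqref{integral3} it takes a different route.

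For \eqref{integral2}, the paper computes the integral exactly by repeated integration by parts. This gives the truncated binomial sum $\frac{1}{(a+b+1)!}\sum_{k=a+1}^{a+b+1}\binom{a+b+1}{k}c^{a+b+1-k}(1-v)^k$, and the paper then adds the missing nonnegative terms $k=0,\dots,a$. Your extension of the $s$-range from $[0,1-v]$ to $[0,L]$ is the same inequality seen more conceptually: the extra piece $\int_{1-v}^{L}$ of your integrand equals exactly those missing terms, and the Beta identity replaces the recursion.

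The real difference is in \eqref{integral3}. The paper needs the \emph{exact} closed form for both terms of the bracket, not just the inequality \eqref{integral2}. It subtracts the two expansions and then discards the terms $k\le a$ using $(\theta+c)^j\ge c^j$. You instead derive \eqref{integral3} from the inequality \eqref{integral2} alone. You write the bracket as $\int_c^{c+\theta}$ of the $(b-1)$-integrand and use that \eqref{integral2} holds uniformly in the parameter $c\ge 0$. This shows that \eqref{integral3} is just \eqref{integral2} integrated in $c$, which is shorter and more structural. The paper's approach gives an explicit formula for the integral as a by-product, though it is not used elsewhere.

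One small correction to your closing remark. The inequality survives the $t$-integration for two reasons: $\theta\ge 0$ makes $[c,c+\theta]$ correctly oriented, and $t\ge c\ge 0$ lets \eqref{integral2} apply for each $t$. Nonnegativity of $(1-w+t)^{b-1}$ plays no role there. Fubini is justified simply because the integrand is continuous on a compact rectangle.
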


\begin{proof}
We note that \eqref{integral1} is trivial, and prove \eqref{integral2} using integration by parts. We have
\[\int\limits_{v}^{1}\frac{(w-v)^a}{a!}\frac{(1-w+c)^b}{b!} dw = \frac{c^b(1-v)^{a+1}}{b!(a+1)!}  + \int\limits_v^1 \frac{(w-v)^{a+1}(1-w+c)^{b-1}}{(a+1)!(b-1)!} dw,\]
and a simple recursion then shows that
\[\int\limits_{v}^{1}\frac{(w-v)^a}{a!}\frac{(1-w+c)^b}{b!} dw = \frac{1}{(a+b+1)!} \sum_{k=a+1}^{a+b+1} {{a+b+1}\choose k} c^{a+b+1-k}(1-v)^k.\]
Bounding the sum from above by including the terms $k=0$ to $k=a$, we see that
\begin{align*}
    \int\limits_{v}^{1}\frac{(w-v)^a}{a!}\frac{(1-w+c)^b}{b!} dw &\le \frac{1}{(a+b+1)!} \sum_{k=0}^{a+b+1} {{a+b+1}\choose k} c^{a+b+1-k}(1-v)^k\\
    &=\frac{(1-v+c)^{a+b+1}}{(a+b+1)!}
\end{align*}
as claimed. To prove \eqref{integral3}, we use \eqref{integral1} and \eqref{integral2}.
    \begin{align*}
        &\int\limits_{v}^{1}\frac{(w-v)^a}{a!}\left[\frac{(1-w+\theta+c)^b}{b!}-\frac{(1-w+c)^b}{b!}\right] dw \\
        &= \frac{1}{(a+b+1)!} \sum_{k=0}^{a+b+1} {{a+b+1}\choose k} (1-v)^k\left[(\theta+c)^{a+b+1-k}-c^{a+b+1-k}\right]\\
        & \, - \frac{1}{(a+b+1)!} \sum_{k=0}^{a} {{a+b+1}\choose k}(1-v)^k \left[ (\theta+c)^{a+b+1-k}-c^{a+b+1-k} \right]\\
        &\leq \frac{(1-v+\theta+c)^{a+b+1}}{(a+b+1)!}-\frac{(1-v+c)^{a+b+1}}{(a+b+1)!}.
    \end{align*}
    as claimed.
\end{proof}

We now apply this to prove Lemma \ref{lemmapreinductionnegative}.

\begin{proof}[Proof of Lemma \ref{lemmapreinductionnegative}.]
   First, suppose that $i_{n-1}\neq i_n-1$; the other case is slightly simpler and we carry it out separately. First note that 
\begin{align*}
     & p_n^{(h)}(c) = \int\limits_0^1\int\limits_{u_n-\theta}^{u_n} \mathbb{P}\left(\bigcap\limits_{k=1}^{n-1}A_k \cap \bigcap\limits_{\ell=1}^{n-1} B_\ell \cap \{U_{i_{n-1}+1}<\ldots <U_{i_{n}-1}<u_n\}\right)\\
     &\hspace{80mm}\cdot \frac{(1-v_n+c)^{h-i_{n}-1}}{(h-i_{n}-1)!} d v_{n}d u_{n}.
\end{align*}
    By \eqref{integral1}, this is equal to
\begin{align*}
     &\int\limits_0^1 \mathbb{P}\left(\bigcap\limits_{k=1}^{n-1}A_k \cap \bigcap\limits_{\ell=1}^{n-1} B_\ell \cap \{U_{i_{n-1}+1}<\ldots <U_{i_{n}-1}<u_n\}\right)\\
     &\hspace{50mm}\cdot\left[\frac{(1-u_n+\theta+c)^{h-i_n}}{(h-i_n)!} -\frac{(1-u_n+c)^{h-i_n}}{(h-i_n)!}\right]du_n\\
     &= \int\limits_0^1\int\limits_{v_{n-1}}^1 \mathbb{P}\left(\bigcap\limits_{k=1}^{n-1}A_k \cap \bigcap\limits_{\ell=1}^{n-2}B_\ell \cap \{U_{i_{n-1}}\in (v_{n-1}, v_{n-1}+\theta)\}\right)\\
     &\hspace{22mm} \cdot \frac{(u_n-v_{n-1})^{i_n-i_{n-1}-2}}{(i_n-i_{n-1}-2)!}\left[\frac{(1-u_n+\theta +c)^{h-i_{n}}}{(h-i_{n})!}-\frac{(1-u_n+c)^{h-i_{n}}}{(h-i_{n})!}\right] d u_{n} d v_{n-1}.
\end{align*}
    Then by \eqref{integral2}, this is at most
\begin{align*}
     &\int\limits_0^1 \mathbb{P}\left(\bigcap\limits_{k=1}^{n-2}A_k \cap \bigcap\limits_{\ell=1}^{n-2} B_\ell \cap \{U_{i_{n-1}}\in (v_{n-1}, v_{n-1}+\theta)\} \cap\{U_{i_{n-2}+1}<\ldots <U_{i_{n-1}-1}<u_{n-1}\}\right)\\
     &\hspace{35mm} \cdot \left[\frac{(1-v_{n-1}+\theta+c)^{h-i_{n-1}-1}}{(h-i_{n-1}-1)!}-\frac{(1-v_{n-1}+c)^{h-i_{n-1}-1}}{(h-i_{n-1}-1)!}\right] d v_{n-1}\\
&=\int\limits_0^1\int\limits_0^1 \left[\frac{(1-v_{n-1}+c+\theta)^{h-i_{n-1}-1}}{(h-i_{n-1}-1)!}-\frac{(1-v_{n-1}+c)^{h-i_{n-1}-1}}{(h-i_{n-1}-1)!}\right] \\
&\hspace{35mm}\cdot \mathbb{P}\bigg(\bigcap\limits_{k=1}^{n-1}A_k \cap \bigcap\limits_{\ell=1}^{n-1} B_\ell \cap \{U_{i_{n-1}+1} \in dv_{n-1}, U_{i_{n-1}} \in du_{n-1}\}\bigg) \\
    & = p_{n-1}^{(h)}(c +\theta)-p_{n-1}^{(h)}(c),
\end{align*}

which completes the proof in the case $i_{n-1}\neq i_n-1$.

If $i_{n-1}= i_n-1$, then $A_n = \Omega$, so the integration by parts step (applying \eqref{integral2}) is no longer needed. We can therefore carry out a slightly shorter argument with one less integral. First note that
\begin{align*}
     &p_n^{(h)}(c):=\int\limits_0^1\int\limits_0^1 \frac{(1-v_{n}+c)^{h-i_{n}-1}}{(h-i_{n}-1)!} \cdot \mathbb{P}\left(\bigcap_{k=1}^n A_k \cap \bigcap_{\ell=1}^n B_\ell \cap \{U_{i_{n}+1} \in dv_n,\, U_{i_n} \in du_n\}\right)\\
      & =\int\limits_0^1\int\limits_{u_n-\theta}^{u_n} \mathbb{P}\left(\bigcap_{k=1}^{n-1} A_k \cap \bigcap_{\ell=1}^{n-2} B_\ell \cap \{U_{i_{n-1}}\in (u_{n}, u_{n}+\theta)\}\right)\frac{(1-v_n+c)^{h-i_{n}-1}}{(h-i_{n}-1)!} d v_{n}d u_{n}.
\end{align*}
By \eqref{integral1}, this is equal to
\begin{align*}
       &\int\limits_0^1 \mathbb{P}\left(\bigcap_{k=1}^{n-1} A_k \cap \bigcap_{\ell=1}^{n-2} B_\ell\cap \{U_{i_{n-1}}\in (u_{n}, u_{n}+\theta)\}\right)\\
     &\hspace{60mm}\cdot\left[\frac{(1-u_n+\theta+c)^{h-i_{n}}}{(h-i_{n})!}-\frac{(1-u_n+c)^{h-i_{n}}}{(h-i_{n})!}\right] d u_{n}\\
     &= \int\limits_0^1\int\limits_{0}^{1} \mathbbm{1}_{\{u_n \in (u_{n-1}-\theta,u_{n-1})\} }\mathbb{P}\left(\bigcap_{k=1}^{n-2} A_k \cap \bigcap_{\ell=1}^{n-2} B_\ell\cap \{U_{i_{n-2}+1}<\ldots <U_{i_{n-1}-1}<u_{n-1}\}\right)\\
     &\hspace{45mm}\cdot\left[\frac{(1-u_n+\theta+c)^{h-i_{n}}}{(h-i_{n})!}-\frac{(1-u_n+c)^{h-i_{n}}}{(h-i_{n})!}\right] d u_{n} d u_{n-1}.
\end{align*}
Now noting that $i_n = i_{n-1}-1$, and changing the dummy variable from $u_n$ to $v_{n-1}$, the above equals
\begin{align*}     
     &\int\limits_0^1\int\limits_{0}^{1} \mathbbm{1}_{\{v_{n-1} \in (u_{n-1}-\theta,u_{n-1})\} } \mathbb{P}\left(\bigcap_{k=1}^{n-2} A_k \cap \bigcap_{\ell=1}^{n-2} B_\ell\cap \{U_{i_{n-2}+1}<\ldots <U_{i_{n-1}-1}<u_{n-1}\}\right)\\
     &\hspace{28mm}\cdot\left[\frac{(1-v_{n-1}+\theta+c)^{h-i_{n-1}-1}}{(h-i_{n-1}-1)!}-\frac{(1-v_{n-1}+c)^{h-i_{n-1}-1}}{(h-i_{n-1}-1)!}\right] dv_{n-1}du_{n-1}\\
       &= \int\limits_0^1\int\limits_0^1 \left[\frac{(1-v_{n-1}+c+\theta)^{h-i_{n-1}-1}}{(h-i_{n-1}-1)!}-\frac{(1-v_{n-1}+c)^{h-i_{n-1}-1}}{(h-i_{n-1}-1)!}\right]\\
       & \hspace{40mm} \cdot \mathbb{P}\left(\bigcap\limits_{k=1}^{n-1}A_k \cap \bigcap\limits_{\ell=1}^{n-1}B_\ell \cap \{U_{i_{n-1}+1} \in dv_{n-1}, U_{i_{n-1}} \in du_{n-1}\}\right) \\
      & =  p_{n-1}^{(h)}(c + \theta) - p_{n-1}^{(h)}(c),
\end{align*}
as required.
\end{proof}

We can now use the recursion from Lemma \ref{lemmapreinductionnegative} to prove our desired upper bound on the probability that $n$ specified uniform random variables are ``out of order'' but still within $\theta$ of the following uniform.

\begin{proposition}
\label{proposition6.3}
    Let $n \in \mathbb{N}$, $h\geq n+1$ and $c \in [0,\infty)$. Let $i_0=-1$ and $i_1<\ldots<i_n\in\{0,\ldots,h-1\}$. Then
    \begin{multline*}
    \mathbb{P}\left(\bigcap\limits_{k=1}^n\{U_{i_{k-1}+1}<\ldots <U_{i_k}\}\cap \bigcap\limits_{\ell=1}^{n}\{U_{i_\ell}\in(U_{i_\ell+1},U_{i_\ell+1}+\theta)\}\cap \{U_{i_{n}+1}<\ldots <U_h\}\right)\\
        \leq \sum\limits_{j=0}^{n}{n\choose j}(-1)^{n-j}\frac{(1+j\theta)^{h+1}}{(h+1)!}.
    \end{multline*}
\end{proposition}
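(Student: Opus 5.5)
The probability on the left-hand side equals $p_n^{(h)}(0)$. Indeed, conditional on $U_{i_n+1}=v_n$, the event $\{U_{i_n+1}<\ldots<U_h\}$ requires the remaining $h-i_n-1$ independent uniforms to lie in increasing order in $(v_n,1)$. This has probability $(1-v_n)^{h-i_n-1}/(h-i_n-1)!$, which is the integrand of $p_n^{(h)}(c)$ at $c=0$. It therefore suffices to show that, for all $c\ge 0$,
\[p_n^{(h)}(c)\le \sum_{j=0}^n\binom{n}{j}(-1)^{n-j}\frac{(1+c+j\theta)^{h+1}}{(h+1)!}\]
and then set $c=0$. We use the $c$-dependent version because the recursion in Lemma \ref{lemmapreinductionnegative} shifts $c$ by $\theta$.

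The first step is to iterate Lemma \ref{lemmapreinductionnegative}. Write $\Delta f(c)=f(c+\theta)-f(c)$. Applying the lemma $n-1$ times gives $p_n^{(h)}\le \Delta^{n-1}p_1^{(h)}$. This needs care, because $\Delta$ is not monotone: from $f\le g$ we cannot conclude $\Delta f\le\Delta g$. The approach is to unroll the proof of Lemma \ref{lemmapreinductionnegative} rather than the lemma itself. Each step there writes the relevant quantity as an integral of a probability against a kernel of the form $K(v+c-\theta)-K(v+c)$, plus more terms of the same shape after further iterations. The inequalities come from \eqref{integral2} and \eqref{integral3}, and \eqref{integral3} is exactly the statement that the bound survives one differencing. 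At stage $k$ one therefore needs the $(k-1)$-fold difference analogue of \eqref{integral3}: for the kernel $\Delta^{k-1}\big[(1-w+c)^b/b!\big]$ integrated against $(w-v)^a/a!$, the result is bounded by $\Delta^{k-1}\big[(1-v+c)^{a+b+1}/(a+b+1)!\big]$. I would prove this exactly as \eqref{integral3} is proved. The exact formula from integration by parts gives the full binomial sum minus the terms $k\le a$. Each dropped term has coefficient $(1-v)^k\,\Delta^{k-1}\big[c^{a+b+1-k}\big]$, and this is nonnegative because finite differences of a polynomial with nonnegative coefficients are nonnegative for $c\ge0$. So the unrolled recursion is rigorous, and we obtain $p_n^{(h)}(c)\le \Delta^{n-1}$ applied to the base case.

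The second step is the base case $n=1$, done in the same way as the computation in the proof of Lemma \ref{lemmapreinductionnegative}. First integrate $v_1$ over $(u_1-\theta,u_1)$ using \eqref{integral1}, which gives the kernel $\Delta\big[(1-u_1+c)^{h-i_1}/(h-i_1)!\big]$. Then integrate against the density $u_1^{i_1}/i_1!$ of $\{U_0<\ldots<U_{i_1-1}<u_1\}$, viewed as the case $v=0$ of \eqref{integral3}. Together with \eqref{integral2}, this bounds $p_1^{(h)}(c)$ by
\[\Delta\!\left[\frac{(1+c)^{h+1}}{(h+1)!}\right].\]
Applying the $(n-1)$-fold differences then gives
\[\Delta^n\!\left[\frac{(1+c)^{h+1}}{(h+1)!}\right]=\sum_{j=0}^n\binom{n}{j}(-1)^{n-j}\frac{(1+c+j\theta)^{h+1}}{(h+1)!},\]
and setting $c=0$ yields the claim.

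The main obstacle is the sign issue in the first step: Lemma \ref{lemmapreinductionnegative} on its own does not iterate. The fix I would try first is the one above, proving the generalised \eqref{integral3} with iterated differences via nonnegativity of the polynomial differences. An alternative is a direct induction on $n$ for the statement "$p_n^{(h)}(c)$ is bounded by the integral of the unperturbed probability against the iterated-difference kernel". Either way, everything rests on checking that the terms dropped in \eqref{integral2}-type bounds remain nonnegative after repeated differencing.
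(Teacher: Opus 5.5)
Your proof is correct. Its skeleton is the paper's: identify the left-hand side with $p_n^{(h)}(0)$, peel off the out-of-order uniforms as in the proof of Lemma~\ref{lemmapreinductionnegative}, and finish with \eqref{integral1}, \eqref{integral2} and the binomial identity. The real difference is at the point you flag, and your worry is justified.

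The paper iterates the scalar inequality of Lemma~\ref{lemmapreinductionnegative}. Passing from $p_{n-1}^{(h)}(\theta)-p_{n-1}^{(h)}(0)$ to $p_{n-2}^{(h)}(2\theta)-2p_{n-2}^{(h)}(\theta)+p_{n-2}^{(h)}(0)$ needs the \emph{lower} bound $p_{n-1}^{(h)}(0)\ge p_{n-2}^{(h)}(\theta)-p_{n-2}^{(h)}(0)$. The lemma does not supply this, and for $\theta>0$ it actually fails, since the lemma's proof discards strictly positive terms. The same problem recurs when the upper bound on $p_1^{(h)}(j\theta)$ is inserted into the alternating sum \eqref{eq:phmsum}.

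Your kernel-level iteration avoids this. By linearity, $\Delta^r p_m^{(h)}(c)$ is exactly the integral of the differenced kernel $\Delta^r\big[(1-v_m+c)^{h-i_m-1}/(h-i_m-1)!\big]$ against a nonnegative measure that does not depend on $c$. So every step becomes a pointwise bound on a kernel. The $r$-fold version of \eqref{integral3} holds because $\Delta^r[c^j]$ is a polynomial in $c$ with nonnegative coefficients. The paper's route is shorter but not valid as written; yours costs one extra one-line lemma and gives an actual proof of the stated bound.

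Two places to tighten.

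\emph{The base case.} Do not first bound $p_1^{(h)}(c)\le \Delta F(c)$, with $F(c)=(1+c)^{h+1}/(h+1)!$, and then ``apply the $(n-1)$-fold differences'' to that inequality. Doing so reintroduces exactly the sign problem you identified. There are two clean fixes:
\begin{itemize}
\item apply the $n$-fold version of \eqref{integral3} at $v=0$ directly to $\Delta^{n-1}p_1^{(h)}(c)$; or
\item note that $p_1^{(h)}(c)=\Delta F(c)-R(c)$, where $R$ is a polynomial in $c$ with nonnegative coefficients, so that $\Delta^{n-1}R\ge 0$.
\end{itemize}

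\emph{The truncation at $0$.} For $u<\theta$ the $v$-integral really runs over $(0,u)$, not $(u-\theta,u)$. So each use of \eqref{integral1} is an inequality rather than an equality. It goes the right way, because $\Delta^r\big[(1-v+c)^b\big]\ge 0$ for all $v\le 1$ and $c\ge 0$.

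A small notational slip: your kernel written as $K(v+c-\theta)-K(v+c)$ should be the difference in $c$ of $(1-v+c)^b/b!$, as you write correctly later.
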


\begin{proof}
Recalling the definitions of $A_k$ and $B_\ell$, and integrating out the values of $U_{i_n}$ and $U_{i_n+1}$, we have 
    \begin{align*}
        &\mathbb{P}\left(\bigcap\limits_{k=1}^{n}\{U_{i_k}\in(U_{i_k+1},U_{i_k+1}+\theta)\}\cap \bigcap\limits_{l=1}^n\{U_{i_{l-1}+1}<\ldots <U_{i_l}\}\cap \{U_{i_{n}+1}<\ldots <U_h\}\right)\\
        & = \int\limits_0^1\int\limits_0^1 \frac{(1-v_{n})^{h-i_{n}-1}}{(h-i_{n}-1)!} \mathbb{P}\left(\bigcap\limits_{k=1}^{n}A_k \cap \bigcap\limits_{\ell=1}^{n}B_\ell \cap\{U_{i_{n}+1} \in dv_n, U_{i_n} \in du_n\}\right)\\
      &= p_{n}^{(h)}(0).
    \end{align*}
By Lemma \ref{lemmapreinductionnegative}, we obtain
 \begin{align}
   p_{n}^{(h)}(0)
    &\leq p_{n-1}^{(h)}(\theta) -p_{n-1}^{(h)}(0)\nonumber\\
    &\leq p_{n-2}^{(h)}(2\theta)-2p_{n-2}^{(h)}(\theta)+p_{n-2}^{(h)}(0)\nonumber\\
    &\leq p_{n-3}^{(h)}(3\theta)-3 p_{n-3}^{(h)}(2\theta)+ 3 p_{n-3}^{(h)}(\theta)- p_{n-3}^{(h)}(0)\nonumber\\
    & \quad \vdots\nonumber\\
    &\leq \sum\limits_{j=0}^{n-1}{n-1\choose j}(-1)^{n-1-j}p_1^{(h)}(j\theta ).\label{eq:phmsum}
\end{align}
Now,
\begin{align*}
p_1^{(h)}(j\theta) &= \int\limits_0^1\int\limits_0^1 \frac{(1-v_{1}+j\theta)^{h-i_{1}-1}}{(h-i_{1}-1)!} \mathbb{P}\left(A_1\cap B_1\cap\{U_{i_{1}+1} \in dv_1, U_{i_1} \in du_1\}\right)\\
&=\int\limits_0^1\int\limits_{u_1-\theta}^{u_1} \frac{u_1^{i_1}}{i_1!} \frac{(1-v_{1}+j\theta)^{h-i_{1}-1}}{(h-i_{1}-1)!} dv_1 du_1.
\end{align*}
By \eqref{integral1}, this equals
\[\int\limits_0^1 \frac{u_1^{i_1}}{i_1!} \left[\frac{(1-v_{1}+(j+1)\theta)^{h-i_{1}}}{(h-i_{1})!}-\frac{(1-v_{1}+j\theta)^{h-i_{1}}}{(h-i_{1})!}\right] du_1\]
and then by \eqref{integral2}, we obtain that
\[p_1^{(h)}(j\theta)\le \frac{(1+(j+1)\theta)^{h+1}}{(h+1)!}-\frac{(1+j\theta)^{h+1}}{(h+1)!}.\]
Substituting this into \eqref{eq:phmsum} and by the recursive relation of the binomial coefficients, we have that
\begin{align*}
p_n^{(h)}(0) & \leq \sum\limits_{j=0}^{n-1}{n-1\choose j}(-1)^{n-1-j}\left[\frac{(1+(j+1)\theta)^{h+1}}{(h+1)!}-\frac{(1+j\theta)^{h+1}}{(h+1)!}\right]\\
    & = \sum\limits_{j=1}^{n}{n-1\choose j-1}(-1)^{n-j}\frac{(1+j\theta)^{h+1}}{(h+1)!}+\sum\limits_{j=0}^{n-1}{n-1\choose j}(-1)^{n-j}\frac{(1+j\theta)^{h+1}}{(h+1)!}\\
    &=\sum\limits_{j=1}^{n-1}{n\choose j}(-1)^{n-j}\frac{(1+j\theta)^{h+1}}{(h+1)!} + \frac{(1+n\theta)^{h+1}}{(h+1)!} + \frac{1}{(h+1)!}(-1)^{n}\\
    &=  \sum\limits_{j=0}^{n}{n\choose j}(-1)^{n-j}\frac{(1+j\theta)^{h+1}}{(h+1)!}
\end{align*}
as claimed.
\end{proof}

Finally, we sum over the possible choices of ``out of order'' uniforms to prove our main result for this section.

\begin{proof}[Proof of Proposition \ref{prop:path_upper_bd}]
In order to have $U_0<U_1+\theta < U_2 + 2\theta < \ldots < U_h + h\theta$, there must exist some $n$ between $0$ and $h$ such that $n$ of the uniforms are ``out of order'' and the remaining uniforms are ``in order'', in the sense above. By Proposition \ref{proposition6.3}, we can bound the probability of this happening by a quantity that does not depend on which indices are out of order, only on $n$. Since, for each $n$, there are at most $h\choose n$ possible choices of the $n$ ``out of order'' indices, by Proposition \ref{proposition6.3} we have
 \begin{align*}
    \mathbb{P}(U_0<U_1+\theta < \ldots < U_h + h\theta)&\leq \sum\limits_{n=0}^{h}{h\choose n}\mathbb{P}(\text{specified } n\text{ uniforms are out of order}) \\
    &\leq \sum\limits_{n=0}^{h}{h\choose n}\sum\limits_{j=0}^{n}{n\choose j} (-1)^{n-j}\frac{(1+\theta j)^{h+1}}{(h+1)!}.
 \end{align*}
Rearranging, we have 
 \begin{equation}\label{diagonalsum}
    \mathbb{P}(U_0<U_1+\theta < \ldots < U_h + h\theta)
    \leq \sum\limits_{j=0}^{h}\frac{(1+\theta j)^{h+1}}{(h+1)!}\sum\limits_{n=j}^{h}{h\choose n} {n\choose j}(-1)^{n-j}.
 \end{equation}
We note that ${h \choose n}{n\choose j} = {h\choose j}{h-j \choose n-j}$ (in both sides of the equation we are choosing disjoint sets of size $j$ and $n-j$ from $h$). Thus
 \[\sum\limits_{n=j}^{h}{h\choose n} {n\choose j}(-1)^{n-j} = {h\choose j} \sum_{n=0}^{h-j} {h-j \choose n}(-1)^{n} = {h\choose j} (1-1)^{h-j}.\]
We then see that the final sum in \eqref{diagonalsum} is zero except when $j=h$, in which case it equals $1$. The result follows immediately.
\end{proof}

\section{Explicit bounds for Rough Mount Fuji accessibility percolation on trees}\label{sec:explicit_bounds}

In this section, we apply Proposition \ref{prop:path_upper_bd}, our upper bound on the probability that one particular path of length $h+1$ is increasing, to bound the first moment of the number of increasing paths to height $h$ in a tree, and therefore show that there is no RMF accessibility percolation when $\theta$ is small. It is slightly easier to first consider Bienaym\'e-Galton-Watson trees, and we do this in Section \ref{sec:no_perco_GW}, before generalising to a wider class of trees in Section \ref{sec:no_perco_general_trees}. We then show that there is RMF accessibility percolation on Bienaym\'e-Galton-Watson trees when $\theta$ is sufficiently large via a simple multitype branching process approximation together with a symmetrization argument, in Section \ref{sec:perco_explicit_GW}.

\subsection{No accessibility percolation for small $\theta$ on Bienaym\'e-Galton-Watson trees: Proof of Proposition \ref{prop:no_perco_GW}}\label{sec:no_perco_GW}
Consider a Bienaym\'e-Galton-Watson tree whose offspring distribution $L$ satisfies $\E[L]=m\in(1,\infty)$. Let $P_h$ be the set of paths from the root to generation $h$, so that $\E[\#P_h]= m^h $. Let $N_h$ be the number of \emph{increasing} paths in $P_h$, i.e.~the number of paths $(u_0,u_1,\ldots,u_h)\in P_h$ such that $X_{u_0}<X_{u_1}<\ldots<X_{u_h}$. By Markov's inequality the probability that there exists an increasing path to generation $h$ satisfies
\[\mathbb{P}(N_h \geq 1) \leq \mathbb{E}[N_h],\]
and by the many-to-one or first moment formula,
\[\E[N_h] = m^h \mathbb{P}(\text{one fixed path of length $h$ is increasing}).\]
But for any $\theta\in [0,1]$, by Proposition \ref{prop:path_upper_bd} we have
\[\mathbb{P}(\text{one fixed path of length $h$ is increasing})=\mathbb{P}(U_0< U_1 + \theta <\ldots < U_h+ h\theta) \leq \frac{(1+ \theta h)^{h+1}}{(h+1)!}.\]
Thus
\[\mathbb{P}(N_h \geq 1)\leq m^h \frac{(1+ \theta h)^{h+1}}{(h+1)!}.\]
We now apply Stirling's approximation \cite{robbins1955remark}
\[n! \ge \frac{n^n \sqrt{2\pi n}}{e^n}\]
to see that
\[\mathbb{P}(N_h \geq 1)\leq  \frac{m^h(1+ \theta h)^{h+1}e^{h+1}}{(h+1)^{h+1}\sqrt{2\pi(h+1)}} \le \frac{1}{\sqrt{2\pi(h+1)}} \left(\frac{em}{h+1}+em\theta\right)^{h+1}.\]
We observe that this converges to zero if $\theta \le \frac{1}{em}$, and taking a logarithm gives the desired bound on the $\limsup$, completing the proof of Proposition \ref{prop:no_perco_GW}.

\subsection{No accessibility percolation for small $\theta$ on general trees: Proof of Theorem \ref{thm:no_perco_small_theta}}\label{sec:no_perco_general_trees}

Our aim in this section is to prove Theorem \ref{thm:no_perco_small_theta}, which gives an upper bound on the probability of increasing paths of length $h$ in general infinite, locally finite trees. The intuition is the same as for regular trees above, but using the definition of the branching number to show that it can be used as a replacement for the parameter $m$.

We assume for the rest of this section that $T$ is an infinite, locally finite tree with root $\rho$. We recall from Definitions \ref{definitionofcutset} and \ref{definitionofbranchingnumber} our characterisation of the branching number of $T$: first we define $\Pi\subset V$ to be a \emph{cutset} separating $\rho$ from infinity if every infinite (simple) path from $\rho$ must include at least one vertex from $\Pi$. Then the branching number of $T$ is
\[\operatorname{br} T=\sup \left\{\lambda : \inf _{\Pi} \sum_{v \in \Pi} \lambda^{-|v|}>0\right\}\]
where $|v|$ denotes the graph distance from the root $\rho$ to $v$, and the infimum is over cutsets $\Pi$ that separate $\rho$ from infinity. Clearly $\br T\ge 1$.

\begin{proof}[Proof of Theorem \ref{thm:no_perco_small_theta}]
For a cutset $\Pi$, let $N_{\Pi}$ be the number of increasing paths from $\rho$ to $\Pi$; that is,
\[\#\{v\in\Pi : \text{there exists a path $\rho=v_0,v_1,\ldots,v_{|v|} = v$ such that } X_{v_0}<X_{v_1}<\ldots<X_{v_{|v|}}\}.\]
Then for any $\theta\in[0,1]$, by Markov's inequality,
\[\mathbb{P}(N_{\Pi}\ge 1) \leq \mathbb{E}[N_{\Pi}] = \sum_{v\in \Pi} \mathbb{P}(\text{path from $\rho$ to $v$ is increasing}).\]
By Proposition \ref{prop:path_upper_bd},
\begin{equation}\label{eq:prop_cutset_ub}
\mathbb{P}(N_{\Pi}\ge 1) \leq \sum_{v\in\Pi} \frac{(1+\theta |v|)^{|v|+1}}{(|v|+1)!}.
\end{equation}

Now suppose that $\theta< \frac{1}{e\br T}$, and take $\lambda \in (\br T, \frac{1}{e\theta})$. By the definition of $\br T$, we may choose a sequence of cutsets $\Pi_n$ such that $\lim_{n\to\infty}\sum_{v\in \Pi_n} \lambda^{-|v|} = 0$. Note that if we define $I_n = \inf\{|v| : v\in \Pi_n\}$, then $\sum_{v\in\Pi_n}\lambda^{-|v|} \ge \lambda^{-I_n}$ and therefore $I_n\to\infty$ as $n\to\infty$. Thus $e/I_n + e\theta < 1/\lambda$ for all sufficiently large $n$.

It is well-known (see e.g.~\cite{robbins1955remark}) that $n! \ge n^n e^{-n}$ for all $n\in\mathbb{N}$. Thus, applying \eqref{eq:prop_cutset_ub} to our chosen cutsets $\Pi_n$ for sufficiently large $n$,
\[\mathbb{P}(N_{\Pi_n}\ge 1) \leq \sum_{v\in\Pi_n} \left(\frac{e+e\theta |v|}{|v|+1}\right)^{|v|+1} \le \sum_{v\in\Pi_n}\left(\frac{e}{|v|}+e\theta\right)^{|v|+1} < \sum_{v\in\Pi_n}\lambda^{-|v|}.\]
By our choice of cutsets, this converges to $0$ as $n\to\infty$. Since any infinite path must intersect $\Pi_n$ for every $n$, we have
\[\P(\exists \text{ infinite increasing path}) \le \P(N_{\Pi_n}\ge 1) \to 0,\]
completing the proof.
\end{proof}

\subsection{Accessibility percolation for large $\theta$ on Bienaym\'e-Galton-Watson trees: Proof of Proposition \ref{prop:perco_GW}}\label{sec:perco_explicit_GW}

Foreshadowing our approach in the next section (but following a simpler approximation to obtain a more explicit bound), we consider the uniform random variable assigned to a vertex $v$ as its type; but to avoid the complications of an uncountable type space, we discretise. Suppose that $T$ is a Bienaym\'e-Galton-Watson tree with offspring distribution $L$ satisfying $\E[L\log_+ L]<\infty$. Fix $k\in\mathbb{N}$, which we will take to be large, and assume that $\theta\ge 1/k$. If $U_v$ is the uniform random variable assigned to $v\in T$, then say that $v$ has type $Y_v = \lfloor k U_v \rfloor$. Let $T'$ be the subtree of $T$ consisting of vertices whose type $Y_v$ is at least their parent's type minus $\theta':=\lfloor k\theta\rfloor-1$.

Note that if $u$ is the parent of $v$, and $Y_v \ge Y_u - \theta'$, then $kU_v > kU_u - k\theta$ and therefore $X_v > X_u$.

We claim that $T'$ is a multitype branching process in the classical sense; see e.g.~\cite[Chapter V]{athreya_ney:branching_processes}. Indeed, the type space $I$ is $\{0,1,\ldots, k-1\}$, and the number $L^{(i)}_j$ of type $j$ children (in $T'$) of a type $i$ particle satisfies
\[\P(L^{(i)}_j = a) = \begin{cases} 0 & \text{ if } j<i-\theta'\\
									\sum_{b=a}^\infty \P(L=b){b\choose a}(1/k)^a (1-1/k)^{b-a} & \text{ if } j\ge i-\theta'.\end{cases}\]
In other words, conditional on the number of children in $T$ being $b$, the number of type $j$ children in $T'$ is Binomial$(b,1/k)$ for each $j\ge i-\theta'$, and zero otherwise. We therefore have
\[m_{ij} := \E[L^{(i)}_j] = \begin{cases} 0 & \text{ if } j<i-\theta'\\
										 \E[L]/k & \text{ if } j\ge i-\theta'.\end{cases}\]
Since $\E[L\log_+ L]<\infty$, we may apply \cite[Theorem V.6.1]{athreya_ney:branching_processes}, the multitype analogue of the Kesten-Stigum theorem. It is easily checked that the process is non-singular (not every particle has exactly one offspring), and the mean matrix $M$ consisting of the $m_{ij}$ above is positive regular, since $M^j$ has all strictly positive entries for sufficiently large $j$, and therefore has a positive maximal eigenvalue $\lambda$. Then \cite[Theorem V.6.1]{athreya_ney:branching_processes} says that $T'$ survives with positive probability if $\lambda>1$.

Consider the symmetrized matrix $\tilde M$ consisting of entries
\[\tilde m_{ij} := \E[L^{(i)}_j] = \begin{cases} 0 & \text{ if } j<i-\theta'\\
												 0 & \text{ if } j>i+\theta'\\
										 \E[L]/k & \text{ otherwise.}\end{cases}\]
It is straightforward to couple our multitype branching process above with a multitype branching process with mean matrix $\tilde M$, such that our process is always at least as large as the process corresponding to $M'$. Thus, again applying \cite{athreya_ney:branching_processes}[Theorem V.6.1], we have positive probability of survival if $M'$ has leading eigenvalue $\lambda>1$. Since $M'$ is a symmetric Toeplitz band matrix, we could calculate the eigenvalues of $M'$ exactly, but we do not lose much further accuracy by applying the min-max theorem.

\begin{proof}[Proof of Proposition \ref{prop:perco_GW}]
We apply a special case of the Courant-Fischer theorem, sometimes known as the min-max theorem; see for example \cite[Theorem 4.2.6]{horn2012matrix}. Let $\lambda$ be the largest eigenvalue of the matrix $\tilde M$ defined above. Then, by the min-max theorem, we have
\[\lambda = \max_{0\neq x \in\mathbb{R}^k} \frac{x\cdot \tilde M x}{x\cdot x}.\]
In particular taking $x$ to be the vector whose entries are all $1$, we have
\begin{align*}
\lambda \ge \frac{x\cdot \tilde M x}{k} &= \frac{\E[L]}{k^2} \left( \sum_{i=1}^{\theta'-1}(\theta'+i) + (k-2\theta'+2)(2\theta'-1)\right)\\
&= \frac{\E[L]}{k^2}\big( -(\theta')^2 + (2k+3)\theta' - (k+2) \big).
\end{align*}
Substituting $\theta' = \lfloor k\theta\rfloor-1$, we see that by taking $k$ sufficiently large we can make $\lambda>1$ if $\E[L]\theta(2-\theta)>1$. Solving for $\theta$ completes the proof.
\end{proof}

\section{Proof of Theorem \ref{theomultitype}: characterising the critical value using a multitype branching process approach}\label{sec:characterisation}

To prove Theorem \ref{theomultitype}, we again view the number of vertices in generation $h$ whose paths from the root are increasing as a multitype branching process; but now, instead of counting the number of such paths directly, and discretising and bounding from below as we did in Section \ref{sec:perco_explicit_GW}, we instead use a martingale argument to obtain an accurate asymptotic growth rate.

For vertex $v$ in our Bienaym\'e-Galton-Watson tree, we assign it a \emph{type} $(U_v,|v|)$, where $U_v$ is simply the i.i.d.~Uniform$(0,1)$ random variable associated to $v$ and $|v|$ is the generation of $v$ in the tree (or equivalently, its distance from the root).

To demonstrate with a simple example, consider the case when $T$ is a regular tree where every vertex has $m$ children for some integer $m\ge 2$. Let $\mathcal N_h$ be the set of vertices in generation $h$ whose paths from the root are increasing. Then a vertex of type $(u,h)$ that falls within $\mathcal N_h$ has a random number of children in $\mathcal N_{h+1}$ whose distribution is $\operatorname{Bin}\hspace{-0.7mm}\big(m, (1-u + \theta)\wedge 1\big)$, and each of these children has an i.i.d.~type, whose first component is uniform on $((u -\theta)\vee 0, 1)$. Thus the sequence $(\mathcal N_h,\,h\ge 0)$ forms a multitype branching process \cite{athreya_ney:branching_processes}.

\subsection{The theorem of Biggins and Kyprianou}

Multitype branching processes have been investigated by many authors. A large proportion of the literature is restricted to cases where the collection of types is finite or at least countable (hence the discretisation in Section \ref{sec:perco_explicit_GW}), whereas our model obviously has a continuum of types. To show that there are accessible paths above $\theta_c$, we will apply a very general result of Biggins and Kyprianou \cite{biggins2004measure}. We will then use a classical martingale argument to show that there are no accessible paths in the critical and subcritical cases.

In order to state the theorem of Biggins and Kyprianou that we will use, we translate the concept of a \emph{mean-harmonic} function from their paper into our setting. A function $H:[0,1]\times \mathbb{Z}_+ \to [0,\infty)$ is called mean-harmonic if for all $n$,
\[\E\left[\left.\sum_{v\in \mathcal N_1} H(U_v,n+1)\,\right|\,U_\rho = u\right] = H(u,n).\]
These mean-harmonic functions are conserved on average under reproduction and they can be used to construct natural additive martingales.

Biggins and Kyprianou also use a mean-harmonic function to define a Markov chain $\zeta$ on their type space, which represents the evolution of a single particle in the branching process. Since our application of their theorem will be quite simple, we will not actually need the definition of $\zeta$, but in order to state their theorem we include it (adapted to our context). Given a mean-harmonic function $H$, we let $\zeta = (\zeta_0,\zeta_1,\ldots)$ be a Markov chain taking values in $(0,1)$, with
\[\P(\zeta_{n+1}\in(a,b)\,|\,\zeta_n = u) = \frac{1}{H(u,n)}\E\left[ \sum_{v\in \mathcal N_1} H(U_v,n+1) \ind_{\{U_v\in(a,b)\}}\right]\]
for all $0<a<b<1$ and $n\in\{0,1,2,\ldots\}$.

Let $\mathcal{F}_n$ be the $\sigma$-algebra generated by the first $n$ generations. We can now state the theorem of Biggins and Kyprianou \cite{biggins2004measure} that we will use.

\begin{theorem}[Biggins and Kyprianou]
\label{Kyprianou theorem}
Suppose that $H:[0,1]\times \mathbb{Z}_+ \to [0,\infty)$ is a mean-harmonic function. For $x>0$, let
$$
A(x)=\sum_{i=1}^{\infty} \mathbbm{1}_{\left\{H\left(\zeta_i,i\right) x>1\right\}}.
$$
where $\zeta$ is the Markov chain defined above. Define 
$$W_n=\sum\limits_{v \in \mathcal{N}_n} H\left(U_v, n\right), \text{ and } X=\frac{W_1}{W_0} \mathbbm{1}_{\left(W_0>0\right)}+\mathbbm{1}_{\left(W_0=0\right)}.$$
Suppose that $L$ is a positive increasing function that is slowly varying at infinity. Suppose also that there is a random variable $X^*$ with
$$
\mathbb{P}(X>x \mid U_\rho=u) \leq \mathbb{P}\left(X^*>x\right) \quad \text { for all } u \in [0,1]
$$
and that $\sup _{x>0}\{A(x) / L(x)\}$ is bounded above, almost surely. If $\mathbb{E}\left[X^* L\left(X^*\right)\right]<\infty$, then $\mathbb{E}\left[\limsup_{n\to\infty}W_n | \mathcal{F}_0\right] = W_0$.
\end{theorem}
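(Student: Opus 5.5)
Since Theorem \ref{Kyprianou theorem} is quoted from \cite{biggins2004measure}, the plan is to reconstruct the standard measure-change (spine) argument rather than derive anything from the earlier sections. First I check that $(W_n)$ is a nonnegative $\mathcal F_n$-martingale. This follows directly from mean-harmonicity of $H$ and the branching property: conditionally on $\mathcal F_n$, each $v\in\mathcal N_n$ contributes $H(U_v,n)$ in expectation at generation $n+1$. Assuming $W_0>0$ (otherwise the statement is trivial), I define a probability measure $\mathbb Q$ on the space of marked trees with a distinguished ray (the spine) by
\[
\left.\frac{d\mathbb Q}{d\P}\right|_{\mathcal F_n}=\frac{W_n}{W_0}.
\]
This is the usual construction: along the spine, offspring are size-biased by $H$, and the next spine vertex is chosen among the children with probability proportional to $H(U_v,n+1)$. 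Off the spine, vertices reproduce as under $\P$. With this construction the spine types are exactly the Markov chain $\zeta$ of the statement. Moreover, the spine's normalised offspring ratio at step $i$ has law $\mathbb Q(X_i\in dx\mid\zeta_i=u)=x\,\P(X\in dx\mid U_\rho=u)$, where $X_i=W^{(i)}_1/H(\zeta_i,i)$ is the ratio for the spine vertex at generation $i$.

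Second, I invoke the Durrett/Lyons dichotomy: if $\limsup_n W_n<\infty$ $\mathbb Q$-a.s., then $\mathbb Q\ll\P$ on $\mathcal F_\infty$ and $\E[\lim W_n\mid\mathcal F_0]=W_0$. Since $W_n$ converges $\P$-a.s. as a nonnegative martingale, $\limsup W_n$ equals the limit, which gives the claim. So the task reduces to showing $\mathbb Q(\limsup W_n<\infty)=1$.

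Third, I use the spine decomposition. Let $\mathcal G$ be the $\sigma$-algebra generated by the spine and the spine vertices' offspring. Then
\[
\E_{\mathbb Q}[W_n\mid\mathcal G]\le \sum_{i=0}^{n-1} H(\zeta_i,i)\,X_i+H(\zeta_n,n),
\]
because each non-spine child of the $i$th spine vertex starts an independent $\P$-martingale whose conditional mean is its current $H$-value. By conditional Fatou (applied to $\liminf$) together with the standard upgrade to $\limsup$ in the martingale setting, it suffices to show that $S:=\sum_{i\ge0}H(\zeta_i,i)X_i<\infty$ $\mathbb Q$-a.s., and that $H(\zeta_n,n)$ stays bounded.

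The main obstacle is controlling $S$, and this is where $A$, $L$ and $X^*$ enter. I would split each term according to whether $X_i$ is large compared to $1/H(\zeta_i,i)$. Conditionally on the spine, the $X_i$ are independent and stochastically dominated by size-biased copies of $X^*$. The event $\{X_i>c/H(\zeta_i,i)\}$ holds precisely when $H(\zeta_i,i)X_i/c>1$, so for a size-biased copy $\hat X$ of $X^*$, the number of $i$ for which this happens has conditional mean at most
\[
\E\big[A(\hat X/c)\big]\le C\,\E\big[L(\hat X)\big]= C\,\E\big[X^*L(X^*)\big]<\infty,
\]
using $\sup_x A(x)/L(x)<\infty$ and slow variation of $L$ to absorb the constant $c$. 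By Borel–Cantelli, only finitely many such large terms occur. For the remaining terms, where $X_i\le c/H(\zeta_i,i)$, I would use the fact that the finiteness of $A$ forces $H(\zeta_i,i)\to0$ geometrically fast in an averaged sense. Summing truncated first moments over dyadic scales of $H(\zeta_i,i)$ then gives a finite expectation, again controlled by $\E[X^*L(X^*)]$. The same finiteness of $A$ shows that $H(\zeta_n,n)\to0$. I expect this dyadic truncation bookkeeping to be the technical heart of the argument, and I would follow Biggins and Kyprianou's treatment closely there.
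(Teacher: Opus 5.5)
\textbf{Your overall strategy is right, but the estimate of $S$ fails as written.} The paper gives no proof of this statement: it is imported from Biggins and Kyprianou, and your architecture is essentially theirs. You use the martingale $W_n$ and the measure $\mathbb{Q}$ with $d\mathbb{Q}/d\P|_{\mathcal F_n}=W_n/W_0$, whose spine types are $\zeta$. Durrett's lemma reduces the claim to $\mathbb{Q}(\limsup_n W_n<\infty)=1$, and the spine decomposition plus the $\mathbb{Q}$-supermartingale $1/W_n$ reduce that to $S=\sum_i H(\zeta_i,i)X_i<\infty$ $\mathbb{Q}$-a.s.

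The step that breaks is the claim that, conditionally on the spine, the $X_i$ are independent and stochastically dominated by size-biased copies of $X^*$. There are two problems.
\begin{enumerate}
\item Once you condition on the whole type sequence, $X_i$ no longer has law $x\,\P(X\in dx\mid U_\rho=\zeta_i)$. The type $\zeta_{i+1}$ belongs to one of the children counted in $X_i$. Conditioning on it replaces the $H$-size-biased family by a Palm-type family ``containing a child of type $\zeta_{i+1}$''. The hypothesis on $X^*$ says nothing about that family.
\item Even conditioning on $\zeta_i$ alone, size-biasing does not preserve stochastic order. Take $X\in\{0,10\}$ and $X^*\in\{1,10\}$, each with probabilities $0.9,0.1$. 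Then $X\preceq X^*$, but $\hat X\equiv 10$ is not dominated by $\hat X^*$.
\end{enumerate}
What does hold, and suffices, is a predictable tail bound. Let $\mathcal H_i$ be generated by $\zeta_0,\dots,\zeta_i$ and the spine families of generations $<i$. Mean-harmonicity gives $\E[X\mid U_\rho=u]=1$, so
\[
\mathbb{Q}(X_i>x\mid\mathcal H_i)=\E\big[X\ind_{\{X>x\}}\,\big|\,U_\rho=\zeta_i\big]=x\P(X>x\mid U_\rho=\zeta_i)+\int_x^\infty\P(X>t\mid U_\rho=\zeta_i)\,dt\le\E\big[X^*\ind_{\{X^*>x\}}\big].
\]
Here $X$ must be read as the generation-$i$ ratio $\sum_v H(U_v,i+1)/H(u,i)$. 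So you need the domination hypothesis at every generation, not only at the root as transcribed. This is automatic in Biggins--Kyprianou, where the generation is part of the type. It also holds in the paper's application, where $H(s,n)=\lambda^{-n}f(s)$.

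\textbf{The fix.} Replace Borel--Cantelli given the spine by its conditional form. Write $h_i=H(\zeta_i,i)$ and $\xi_i=\min(h_iX_i,1)$; these are $[0,1]$-valued and $\mathcal H_{i+1}$-measurable. Then $\sum_i\xi_i<\infty$ a.s.\ on the event $\{\sum_i\E_{\mathbb Q}[\xi_i\mid\mathcal H_i]<\infty\}$. Moreover $\sum_i\xi_i<\infty$ already forces $h_iX_i<1$ eventually, hence $S<\infty$, so large terms need no separate treatment. By the tail bound,
\[
\sum_i\E_{\mathbb Q}[\xi_i\mid\mathcal H_i]\le\E\Big[X^*\sum_i\min(h_iX^*,1)\Big],
\]
with the expectation over an independent copy of $X^*$ and the spine fixed.

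For the truncated part you do not get geometric decay of $h_i$, and you do not need it. With $L(x)=1+(\log_+x)^2$ one can have $h_i=e^{-\sqrt i}$. Instead use
\[
\sum_i h_i\ind_{\{h_ix\le1\}}\le\int_0^{1/x}\#\{i:h_i>t\}\,dt\le\frac1x+\int_x^\infty A(u)u^{-2}\,du.
\]
Set $K=\sup_xA(x)/L(x)$ and apply Karamata's theorem, $x\int_x^\infty L(u)u^{-2}\,du\sim L(x)$ (equivalently, Potter bounds in your dyadic summation). This gives
\[
\sum_i\min(h_ix,1)\le 2+A(x)+x\int_x^\infty A(u)u^{-2}\,du\le C_K\big(1+L(x)\big)\qquad\text{for all }x>0.
\]
Hence the predictable sum is at most $C_K\,\E[X^*(1+L(X^*))]<\infty$, pathwise in the spine. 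So an a.s.\ (possibly random) bound on $\sup_xA(x)/L(x)$ is all that is needed.
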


Again we note that in our application of Theorem \ref{Kyprianou theorem} we will not actually need the explicit expression of the transition measure of the Markov chain $\zeta$. Instead, we will take a trivial upper bound for $A(x)$ using the maximal possible value for $H(U_v,i)$ over all particles $v\in \mathcal N_i$. In the case where the underlying offspring distribution $L$ has a finite second moment, we could apply a standard second moment argument in place of Theorem \ref{Kyprianou theorem}. Even in the more general case $\E[L\log_+ L]$ considered in Theorem \ref{theomultitype}, we could apply the spine technique developed by Lyons, Pemantle and Peres \cite{lyons1995conceptual} and refined by Kyprianou \cite{kyprianou2004travelling} and Hardy and Harris \cite{hardy2009spine} to give a self-contained proof of our result, but this would require setting up additional notation, including a change of measure, so we take the shorter option of applying Theorem \ref{Kyprianou theorem}.

\subsection{An expression for the eigenfunctions of the increasing reproduction operator}

We need to find appropriate mean-harmonic functions so that we can apply Theorem \ref{Kyprianou theorem}. For this, we first derive a general expression that eigenfunctions $f_{m,\theta,\lambda}$ of a certain operator must satisfy, using the first-moment formula for Bienaym\'e-Galton-Watson trees.

\begin{proposition}
\label{eigenfuctiongeneralexp}
Let $m\in(0,\infty)$ and $\theta\in[0,1]$. Suppose that $\lambda\in\mathbb{R}$, and that $f_{m,\theta,\lambda}:[0,1]\to\R$ is an integrable function satisfying $f_{m,\theta,\lambda}(0)=1$ and
\begin{equation}\label{eq:efn_defn}
\mathbb{E}\left[\left.\sum \limits_{v \in \mathcal{N}_1} f_{m,\theta,\lambda}\left(U_v\right) \right| \mathcal{F}_0\right]=\lambda f_{m,\theta,\lambda}\left(U_\rho\right)
\end{equation}
almost surely. Then for any $j\in \{0,1,2,\ldots\}$ and $u \in [j\theta, (j+1)\theta)\cap(0,1]$,
\begin{equation}\label{eq:efn_recurse}
f_{m,\theta,\lambda}(u)=\sum_{i=0}^{j} \frac{(-1)^{i}m^{i}(u-i\theta)^{i}}{\lambda^{i}i!}.
\end{equation}
Moreover, any $f:[0,1]\to\mathbb{   R}$ and $\lambda\neq 0$ satisfying \eqref{eq:efn_recurse} and
\[\frac{m}{\lambda}\int_0^1 f(s) ds = f(0) = 1\]
is a normalised eigenfunction-eigenvalue pair, i.e.~satisfies \eqref{eq:efn_defn}.
\end{proposition}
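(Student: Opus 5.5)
The plan is to turn \eqref{eq:efn_defn} into an explicit integral equation via the first-moment (many-to-one) formula, then into a delay differential equation, and solve that interval by interval.

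\textbf{The integral equation.} Conditional on $U_\rho=u$, the root has on average $m$ children. Each child has an independent Uniform$(0,1)$ label $U_v$, and it belongs to $\mathcal N_1$ exactly when $U_v+\theta>u$, i.e.\ $U_v>u-\theta$. Hence the left-hand side of \eqref{eq:efn_defn} equals
\[
m\int_{(u-\theta)\vee 0}^{1} f(s)\,ds,
\]
and \eqref{eq:efn_defn} becomes
\begin{equation*}
\lambda f(u) = m\int_{(u-\theta)\vee 0}^1 f(s)\,ds \qquad \text{for a.e. } u\in[0,1].
\end{equation*}
This identity holds for almost every $u$. Since the right-hand side is continuous in $u$, we may redefine $f$ on a null set and take it to hold for every $u$, assuming $\lambda\neq0$.

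\textbf{Recursion on $[0,\theta)$.} For $u\in[0,\theta)$ the lower limit is $0$, so $f$ is constant there. Evaluating at $u=0$ gives $f\equiv f(0)=1$ on $[0,\theta)$, and also $\frac{m}{\lambda}\int_0^1 f=1$. This is the case $j=0$ of \eqref{eq:efn_recurse}.

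\textbf{Recursion on $[\theta,1]$.} For $u\ge\theta$, differentiating the integral equation (the right-hand side is absolutely continuous, so $f$ is too) gives the delay equation
\[
f'(u) = -\frac{m}{\lambda} f(u-\theta).
\]
We then induct on $j$. Suppose \eqref{eq:efn_recurse} holds on $[(j-1)\theta, j\theta)$. For $u\in[j\theta,(j+1)\theta)$, integrate the delay equation from $j\theta$ to $u$, using continuity of $f$ at $j\theta$:
\[
f(u) = f(j\theta) - \frac{m}{\lambda}\int_{j\theta}^u f(s-\theta)\,ds .
\]
Substituting the formula from the previous interval, the integral of $\frac{(-1)^i m^i (s-\theta-i\theta)^i}{\lambda^i i!}$ yields
\[
\frac{(-1)^{i} m^{i}\,(u-(i+1)\theta)^{i+1}}{\lambda^{i}(i+1)!}
\]
minus its value at $u=j\theta$. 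That boundary value cancels against $f(j\theta)$, which is the previous interval's formula extended continuously to $j\theta$.

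One point needs care: the index-$j$ term $(u-j\theta)^j$ of the new sum vanishes at $u=j\theta$. Because of this, the continuation of the previous formula to the point $j\theta$ equals the new sum evaluated there, and the telescoping closes. Reindexing $i+1\to i$ then gives exactly \eqref{eq:efn_recurse} on the new interval. I expect the main obstacle to be this bookkeeping, that is, checking the boundary terms cancel, rather than anything conceptual.

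\textbf{Converse.} Suppose $f$ satisfies \eqref{eq:efn_recurse} and $\frac{m}{\lambda}\int_0^1 f=f(0)=1$.
\begin{itemize}
\item[(i)] The piecewise formula is continuous at each $j\theta$ (by the vanishing-term observation above).
\item[(ii)] On each interval it satisfies $f'(u)=-\frac m\lambda f(u-\theta)$, by direct differentiation.
\end{itemize}
Now set
\[
g(u)=\frac m\lambda\int_{(u-\theta)\vee0}^1 f(s)\,ds .
\]
On $[0,\theta)$, $g$ is constant and equal to $\frac m\lambda\int_0^1 f = 1 = f$. On $[\theta,1]$ we have $g'(u)=-\frac m\lambda f(u-\theta)=f'(u)$, and both $g$ and $f$ are continuous at $\theta$. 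Hence $g\equiv f$ on $[0,1]$, which is \eqref{eq:efn_defn} via the many-to-one identity above.
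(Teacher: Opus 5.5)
Your proposal is correct and follows essentially the same route as the paper. The many-to-one formula turns \eqref{eq:efn_defn} into $f(u)=\frac{m}{\lambda}\int_{(u-\theta)\vee 0}^1 f(s)\,ds$, which forces $f\equiv 1$ on $[0,\theta)$; the delay equation $f'(u)=-\frac{m}{\lambda}f(u-\theta)$ is then solved interval by interval, with the constant fixed by continuity at $j\theta$; and the converse comes from differentiating the piecewise formula and matching the constant via $\frac{m}{\lambda}\int_0^1 f=f(0)=1$.

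One small caveat, shared with the paper: after your redefinition on a null set, the value at $0$ is $\frac{m}{\lambda}\int_0^1 f$, so the hypothesis $f(0)=1$ must really be read as that normalisation. The argument also implicitly needs $\theta>0$ and $\lambda\neq 0$.
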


Often we will fix $m$ and $\theta$ and use the notation $f_\lambda$ rather than $f_{m,\theta,\lambda}$ to keep the notation manageable.

\begin{proof}[Proof of Proposition \ref{eigenfuctiongeneralexp}]
By the first moment formula for Bienaym\'e-Galton-Watson trees (sometimes known as the many-to-one lemma), for any integrable $f:[0,1]\to\mathbb{R}$ we have
\begin{equation}\label{eq:manytoone}
\mathbb{E}\left[\left.\sum \limits_{v \in \mathcal{N}_{1}} f\left(U_v\right) \right| \mathcal{F}_0\right] = m\int_{(U_{\rho}-\theta)\vee 0}^1 f(s) ds.
\end{equation}
Thus the condition \eqref{eq:efn_defn} holds for integrable $f$ if and only if
\begin{equation}\label{eq:f_integral_eqn}
f(u)=\frac{m}{\lambda} \int_{(u-\theta)\vee 0}^1 f(s) d s \quad \forall u \in(0,1) .
\end{equation}

Suppose now that $f_\lambda$ is an integrable function with $f_\lambda(0)=1$ and satisfying \eqref{eq:efn_defn}. Then by \eqref{eq:f_integral_eqn}, for $u < \theta$ we have $f_{\lambda}(u)=f_\lambda(0) = 1$. For $u \in [\theta, 2 \theta)$, by differentiating \eqref{eq:f_integral_eqn}, $f_{\lambda}^{\prime}(u)=-\frac{m}{\lambda}$ and therefore noting that \eqref{eq:f_integral_eqn} implies that $f_\lambda$ must be continuous, we must have $f_{\lambda}(u)= 1- \frac{m}{\lambda}(u-\theta)$.

Now, as an induction hypothesis on $j$, we suppose that for $u \in [j \theta, (j+1)\theta)\cap (0,1]$ we have 
$$f_{\lambda}(u)=\sum_{k=0}^{j} \frac{(-1)^{k}m^{k}(u-k\theta)^{k}}{\lambda^kk!}.$$
Then for $u \in [(j+1) \theta, (j+2)\theta) \cap [0,1]$ we have 
\begin{align*}
   f^{\prime}_{\lambda}(u)= \frac{-f_{\lambda}(u-\theta)}{\int_{0}^1f_{\lambda}(v)dv} = \frac{\sum_{k=0}^{j} \frac{(-1)^{k+1}m^{k}(u-(k+1)\theta)^{k}}{\lambda^{k}k!}}{\lambda/m} = \sum_{k=0}^{j} \frac{(-1)^{k+1}m^{k+1}(u-(k+1)\theta)^{k}}{\lambda^{k+1}k!}.
\end{align*}
Integrating with respect to $u$,
$$f_{\lambda}(u)=\sum_{k=0}^{j} \frac{(-1)^{k+1}m^{k+1}(u-(k+1)\theta)^{k+1}}{\lambda^{k+1}(k+1)!} + C.$$

Setting $i=k+1$ we have 
$$f_{\lambda}(u)=\sum_{i=1}^{j+1} \frac{(-1)^{i}m^{i}(u-i\theta)^{i}}{\lambda^{i}i!} + C,$$
and for continuity we need 
$$f_{\lambda}((j+1)\theta)=\sum_{k=0}^{j} \frac{(-1)^{k}m^{k}((j+1 -k)\theta)^{k}}{\lambda^{k}k!}.$$
Thus, 
$$C= 1 + \sum_{k=1}^{j} \frac{(-1)^{k}m^{k}((j+1 -k)\theta)^{k}}{\lambda^{k}k!} - \sum_{i=1}^{j} \frac{(-1)^{i}m^{i}((j+1-i)\theta)^{i}}{\lambda^{i}i!}=1.$$

By induction, for $u \in [(j+1) \theta, (j+2)\theta) \cap [0,1] $ we have
$$f_{\lambda}(u)=\sum_{i=0}^{j+1} \frac{(-1)^{i}m^{i}(u-i\theta)^{i}}{\lambda^{i}i!},$$
i.e.~$f_\lambda$ satisfies \eqref{eq:efn_recurse}. 

It now remains to check the converse, i.e.~that if $f$ and $\lambda\neq 0$ satisfy \eqref{eq:efn_recurse} and $\frac{m}{\lambda}\int_0^1 f(s) ds = f(0)=1$ then they also satisfy \eqref{eq:efn_defn}. Differentiating \eqref{eq:efn_recurse} easily yields that for any $u\ge \theta$ we have
\[f'(u) = -\frac{m}{\lambda} f(u-\theta),\]
and $f'(u)=0$ for $u\in(0,\theta)$. Integrating, we obtain that for any $u\in(0,1)$,
\[f(u) = \frac{m}{\lambda}\int_{(u-\theta)\vee 0}^1 f(s) ds + C\]
for some constant $C$; thus \eqref{eq:f_integral_eqn} is satisfied if and only if $C=0$. This condition is satisfied precisely when $f(0)=\frac{m}{\lambda}\int_0^1 f(s) ds$, and we have our desired normalisation when this quantity equals $1$.
\end{proof}

We now make the simple observation that the eigenfunctions scale linearly with $m$, the expectation of the offspring distribution.

\begin{corollary}\label{cor:scaling_mlambda} 
For each $m\in(0,\infty)$ and $\theta\in[0,1]$, suppose that $\lambda_{m,\theta}$ is an eigenvalue corresponding to an eigenfunction $f_{m,\theta,\lambda}$ from Proposition \ref{eigenfuctiongeneralexp}; that is, suppose that $\lambda_{m,\theta}$ and $f_{m,\theta,\lambda}$ satisfy \eqref{eq:efn_defn}. Then for each $m\in(0,\infty)$ and $\theta\in[0,1]$, $\lambda_{m,\theta}=m\lambda_{1,\theta}$.
\end{corollary}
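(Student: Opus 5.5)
The plan is to read the scaling off the explicit form of the eigenfunctions in Proposition \ref{eigenfuctiongeneralexp}, in which $m$ and $\lambda$ enter only through the ratio $m/\lambda$.

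Fix $\theta\in[0,1]$ and $m\in(0,\infty)$. Suppose $\lambda=\lambda_{m,\theta}$ and $f=f_{m,\theta,\lambda}$ satisfy \eqref{eq:efn_defn}, normalised so that $f(0)=1$. By the first part of Proposition \ref{eigenfuctiongeneralexp}, $f$ is given by \eqref{eq:efn_recurse}. Each term of that expression contains $m^i/\lambda^i=(m/\lambda)^i$. Note also that $\lambda\neq0$, because \eqref{eq:f_integral_eqn} expresses $f$ as $m/\lambda$ times an integral. From the proof of that proposition, \eqref{eq:efn_defn} is equivalent to \eqref{eq:f_integral_eqn}, and hence $\frac{m}{\lambda}\int_0^1 f(s)\,ds=f(0)=1$.

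Now set $\mu:=\lambda/m$ and define $g:=f$. Then $g$ satisfies \eqref{eq:efn_recurse} with $(m,\lambda)$ replaced by $(1,\mu)$, since $1/\mu=m/\lambda$. It also satisfies $\frac{1}{\mu}\int_0^1 g(s)\,ds=\frac{m}{\lambda}\int_0^1 f(s)\,ds=1=g(0)$. By the converse part of Proposition \ref{eigenfuctiongeneralexp}, applied with $m=1$ and using $\mu\neq0$, the pair $(g,\mu)$ is a normalised eigenfunction--eigenvalue pair for offspring mean $1$. So $\mu$ is an eigenvalue $\lambda_{1,\theta}$, which gives $\lambda_{m,\theta}=m\lambda_{1,\theta}$.

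Running the same argument in reverse, starting from any $\lambda_{1,\theta}$ and multiplying by $m$, shows that this correspondence between eigenvalues is a bijection. If the corollary is read as concerning a distinguished eigenvalue, for instance the largest one, the bijection $\lambda\mapsto\lambda/m$ preserves order because $m>0$, so the distinguished eigenvalues also correspond.

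The main obstacle is only the bookkeeping: checking that the normalisation condition and \eqref{eq:efn_recurse} depend on $(m,\lambda)$ solely through $m/\lambda$, and that $\lambda\neq0$ so that the converse direction of the proposition applies.
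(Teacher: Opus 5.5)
Your argument is correct for nonzero eigenvalues, but it takes a different route from the paper.

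The paper never uses the explicit formula \eqref{eq:efn_recurse}. It reads off from the many-to-one formula \eqref{eq:manytoone} that the reproduction operator, $Tf(u)=m\int_{(u-\theta)\vee 0}^1 f(s)\,ds$, is linear in the offspring mean. So the operator for mean $m'$ is $\frac{m'}{m}$ times the one for mean $m$. Any eigenfunction for $m$ with eigenvalue $\lambda_{m,\theta}$ is therefore, unchanged, an eigenfunction for $m'$ with eigenvalue $\frac{m'}{m}\lambda_{m,\theta}$, and one takes $m'=1$.

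You instead run both directions of Proposition \ref{eigenfuctiongeneralexp}. You use the fact that \eqref{eq:efn_recurse} and the normalisation $\frac{m}{\lambda}\int_0^1 f=1$ involve $(m,\lambda)$ only through $m/\lambda$. This gives the same conclusion. It also has the mild merit of showing that the characteristic equation depends only on $m/\lambda$; in the notation of Proposition \ref{prop:m_implies_lambda}, $P_\theta(m,\lambda)=-Q_\theta(m/\lambda)$.

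The cost is that you inherit the restrictions of the explicit formula:
\begin{itemize}
\item You need $\theta>0$. For $\theta=0$ the intervals $[j\theta,(j+1)\theta)$ are empty and \eqref{eq:efn_recurse} carries no information.
\item You need $\lambda\neq 0$, and your justification of this is circular. The identity \eqref{eq:f_integral_eqn} is obtained from \eqref{eq:efn_defn} by dividing by $\lambda$.
\end{itemize}
Moreover, $\lambda=0$ genuinely occurs. At $\theta=1$ the operator is $g\mapsto m\int_0^1 g$, and $f(u)=1-2u$ satisfies \eqref{eq:efn_defn} with $\lambda=0$ and $f(0)=1$.

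That case does not threaten the statement, since the same $f$ is an eigenfunction with eigenvalue $0$ for mean $1$. But your argument does not cover it, and you would need a separate case to handle it. The natural way to handle it is exactly the paper's operator-scaling argument, which needs no division by $\lambda$ and no case analysis.
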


\begin{proof}
 Let the tree $T$ have offspring distribution $L$ with $\mathbb{E}[L]=m$, and suppose that $f_{m,\theta,\lambda}$ and $\lambda_{m,\theta}$ are the associated eigenfunction and eigenvalue. Similarly let $T^{\prime}$ be another tree with offspring distribution $L^{\prime}$ such that $\mathbb{E}[L^{\prime}]=m^{\prime}$. Then by \eqref{eq:manytoone} we have
 $$\mathbb{E}_{L^{\prime}}\left[\left.\sum_{v\in\mathcal{N}_{1}}f_{m,\theta,\lambda_{m,\theta}}(U_{v})\right|\mathcal{F}_{0}\right] = \frac{m^{\prime}}{m}\mathbb{E}_{L}\left[\left.\sum_{v\in\mathcal{N}_{1}}f_{m,\theta,\lambda_{m,\theta}}(U_{v})\right|\mathcal{F}_{0}\right]=\frac{m^{\prime}}{m}\lambda_{m,\theta}f_{m,\theta,\lambda_{m,\theta}}(U_{\rho}).$$
 This implies that $f_{m,\theta,\lambda_{m,\theta}}$ is an eigenfunction corresponding to $m'$ and $\theta$, with eigenvalue $\lambda_{m^{\prime},\theta}=\frac{m^{\prime}}{m}\lambda_{m,\theta}$. Taking $m^{\prime}=1$ completes the proof.
\end{proof}

\subsection{Warm-up: proof of survival in Theorem \ref{theomultitype} for $\theta\in(1/2,1)$}

As a warm-up for the rest of the proof of Theorem \ref{theomultitype}, we will now prove the survival part of Theorem \ref{theomultitype} in the case $\theta \in(1/2,1)$. In this case, we can carry out the calculations explicitly, before we apply a more abstract argument to attack the general case $\theta\in(0,1)$. 

Fix $\theta\in(1/2,1)$ and $m\in(1,\infty)$. By Proposition \ref{eigenfuctiongeneralexp} we have candidate eigenfunctions $f_\lambda$ satisfying
\[f_\lambda(u)=\begin{cases} 1 & \text{ for } u\in[0,\theta],\\
                             1-\frac{m}{\lambda}(u-\theta) & \text{ for } u \in(\theta,1].\end{cases}\]
Then 
\begin{align*}
    \frac{m}{\lambda} \int_0^1 f_\lambda(s) d s &=\frac{m}{\lambda} \int_0^\theta 1 d s+\frac{m}{\lambda} \int_\theta^1\left(1-\frac{m}{\lambda}(u-\theta)\right) d u \\
    &=\frac{m}{\lambda}-\frac{m^2}{2 \lambda^2}(1-\theta)^2.
\end{align*}
Now checking the condition that the integral above must equal $1$, we see that 
$$
\lambda^2-m \lambda+\frac{m^2}{2}(1-\theta)^2=0
$$
from which we obtain
$$
\lambda_\pm =\frac{m}{2} \pm \frac{m}{2} \sqrt{1-2(1-\theta)^2}.
$$
We concentrate on the largest eigenvalue $
\lambda_+ =\frac{m}{2} + \frac{m}{2} \sqrt{1-2(1-\theta)^2}
$. 
This gives, when $\theta\in(1/2,1)$,
$$
f_{\lambda_+} = f_{m,\theta,{\lambda_+}}(s)=\begin{cases}
1 & \text { if } s \le \theta \\
1-\frac{m}{\lambda_+}(s-\theta) &\text { if } s>\theta,
\end{cases}
$$
which Proposition \ref{eigenfuctiongeneralexp} guarantees is indeed a normalised eigenfunction, i.e.~satisfies \eqref{eq:efn_defn}. It is straightforward to check also that for $\theta\in(1/2,1)$, $f_{\lambda_+}$ is decreasing and $f_{\lambda_+}(1)>0$, so that in fact $f_{\lambda_+}(u)>0$ for all $u\in[0,1]$.

To be able to apply Theorem \ref{Kyprianou theorem} and prove that the process survives we define 
\[H(s,n)=\frac{1}{\lambda_+^n} f_{m,\theta,\lambda_+}(s),\]
and note that $H$ is mean-harmonic by \eqref{eq:efn_defn}. We set, as in Theorem \ref{Kyprianou theorem},
$W_n=\sum_{v \in \mathcal{N}_n} H\left(U_v,n\right)$, and note that
\begin{align*}
   A(x) := &\sum_{i=1}^{\infty} \mathbbm{1}_{\left\{H\left(\zeta_i\right) x>1\right\}}= \sum\limits_{i=1}^\infty \mathbbm{1}_{\{x f_{\lambda_+}(\zeta_i) > \lambda_+^i\}} \leq \sum\limits_{i=1}^\infty \mathbbm{1}_{\{x > \lambda_+^i\}}
\end{align*}
since $f_{\lambda_+}(u)\le 1$ for any $u\in(0,1)$ (note that, as we stated earlier, we did not use the definition of the Markov chain $\zeta$ from Theorem \ref{Kyprianou theorem} for this bound).

We choose $L(x)= 1+\log_+ x$. Let $\mathcal{N}^+_1$ be the set of children of the root without deletions, i.e.~when we keep all vertices regardless of their label. Let $X^* = |\mathcal N^+_1|/ \lambda_+ f_{\lambda_+}(1)$. One of the assumptions of Theorem \ref{theomultitype} is that $\E[|\mathcal N^+_1|\log_+ |\mathcal N^+_1|]<\infty$, and thus $\mathbb{E}[X^* L(X^*)]< \infty$.

If we define $$X=\frac{\sum_{v \in\mathcal{N}_1} H\left(U_v,1\right)}{H\left( U_\rho, 0\right)}=\frac{W_1}{W_0},$$
then
$$
X \le \frac{\frac{1}{\lambda_+} |\mathcal N^+_1|}{H(s,0)} \le \frac{|\mathcal N^+_1|}{\lambda_+ f_{\lambda_+}(1)} =X^*.
$$

Finally, if $\lambda_+>1$ then $A(x)\le \frac{\log x}{\log \lambda_+}$ and therefore $\sup_{x>0}\{A(x)/L(x)\}<\infty$, and therefore by Theorem \ref{Kyprianou theorem}, in this case we have $\mathbb{E}[\limsup_{n\to\infty} W_n | \mathcal{F}_0] =W_0 = f_{\lambda_+}(U_0) > 0$, and thus the process survives with strictly positive probability. It remains only to check that $\lambda_+>1$ corresponds to $m>m_c(\theta)$ where $m_c(\theta)$ is the smallest root of $Q_\theta(m)=0$. But we have already calculated directly that $\lambda_+ = m(1+\sqrt{1-2(1-\theta)^2})/2$, and therefore $\lambda_+>1$ precisely when
\[m> \frac{2}{1+\sqrt{1-2(1-\theta)^2}} = \frac{1-\sqrt{1-2(1-\theta)^2}}{(1-\theta)^2}\]
which is the smaller root of $Q_\theta(m) = 1-m+m^2(1-\theta)^2/2$. This completes the proof of survival in Theorem \ref{theomultitype} in the case $\theta\in(1/2,1)$.

\subsection{Survival in Theorem \ref{theomultitype} in the general case $\theta\in(0,1)$}\label{sec:survival_gen}

In the general case, for any $\theta \in (0,1)$, in order to build a mean-harmonic function we need to guarantee that the leading eigenfunction $f_{m,\theta,\lambda}$ is non-negative. We did this by explicit calculation when $\theta\in(1/2,1)$, but calculating $f_{m,\theta,\lambda}$ is increasingly difficult as $\theta$ decreases. Instead we use the Krein-Rutman theorem, a generalisation of the Perron-Frobenius theorem for compact operators. First we need to check that our operator is indeed compact. Let $C_{[0,1]}$ be the set of continuous functions from $[0,1]$ to $\mathbb{R}$, equipped with the uniform norm.

\begin{lemma}
\label{compactoperator}
    Let $T=T_{m,\theta}:C_{[0,1]}\to C_{[0,1]}$ be the linear operator such that, for any $g\in C_{[0,1]}$ and $s \in [0,1]$,
    $$T g(s)= \mathbb{E}\left[\left.\sum\limits_{v \in \mathcal{N}_1} g (U_v) \right| U_{\rho}=s\right].$$
    Then $T$ is compact.
\end{lemma}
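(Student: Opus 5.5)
By the first moment formula \eqref{eq:manytoone}, for any $g\in C_{[0,1]}$ and $s\in[0,1]$ we have the explicit representation
\[Tg(s) = m\int_{(s-\theta)\vee 0}^1 g(r)\,dr.\]
So $T$ is an integral operator with kernel $K(s,r)=m\ind_{\{r\ge s-\theta\}}$, and the plan is to prove compactness via the Arzel\`a--Ascoli theorem. Concretely, I will show that $T$ maps the closed unit ball $B=\{g\in C_{[0,1]}:\|g\|_\infty\le 1\}$ into a uniformly bounded, equicontinuous family of continuous functions. Its closure is then compact, and $T$ is compact.

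Before that, I need to check that $T$ really maps $C_{[0,1]}$ into itself. The map $s\mapsto (s-\theta)\vee 0$ is continuous, and $g$ is integrable. Hence $Tg$ is continuous; in fact it is Lipschitz, as the next step shows. Linearity of $T$ is immediate from the integral formula.

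The key estimates are as follows. For $g\in B$:
\[|Tg(s)|\le m\int_0^1|g(r)|\,dr\le m,\]
so $T(B)$ is uniformly bounded. For $s,t\in[0,1]$:
\[|Tg(s)-Tg(t)| = m\left|\int_{(s-\theta)\vee 0}^{(t-\theta)\vee 0} g(r)\,dr\right| \le m\,\bigl|((s-\theta)\vee 0)-((t-\theta)\vee 0)\bigr| \le m|s-t|,\]
using that $x\mapsto x\vee 0$ is $1$-Lipschitz. Thus every element of $T(B)$ is $m$-Lipschitz, so $T(B)$ is equicontinuous.

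Arzel\`a--Ascoli then gives that $T(B)$ is relatively compact in $C_{[0,1]}$, which is exactly compactness of $T$; boundedness of $T$, with $\|T\|\le m$, comes for free. There is no real obstacle here. The only point needing care is justifying the explicit integral formula for $Tg$ from \eqref{eq:manytoone}. That formula only requires $\E[L]=m<\infty$ and independence of the children's labels from the offspring count: a child $v$ is in $\mathcal N_1$ iff $U_v+\theta>U_\rho$, that is $U_v>U_\rho-\theta$.
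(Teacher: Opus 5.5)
Your proof is correct and takes essentially the same route as the paper: Arzel\`a--Ascoli, with uniform boundedness by $m\|g\|$ and the Lipschitz estimate $|Tg(s)-Tg(t)|\le m\|g\|\,|s-t|$. The only difference is cosmetic: you first rewrite $T$ via \eqref{eq:manytoone} as the integral operator $Tg(s)=m\int_{(s-\theta)\vee 0}^1 g(r)\,dr$, whereas the paper gets the same bound directly by writing the difference as an expectation over children with $U_v\in(s-\theta,t-\theta]$.
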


\begin{proof}
     We must prove that if $B\subset C_{[0,1]}$ is bounded, then $T(B)$ is totally bounded. By the Arzel\`a-Ascoli theorem it is enough to show that $T(B)$ is bounded and equicontinuous.
     
     Suppose that $B \subset C_{[0,1]}$ is bounded by $M$, that is, $\Vert g \Vert \leq M$ for all $g \in B$. Then for $g \in B$,
     \[\| Tg \| = \sup_{s \in [0,1]} \left\vert\, \mathbb{E}\left[\left.\sum_{v \in \mathcal{N}_1} g (U_v) \right| U_{\rho}=s\right] \right \vert \leq M \sup\limits_{s \in [0,1]} \mathbb{E}\left[ {N}(1)  \mid U_{\rho}=s\right] = M \E[ N(1) ],\]
     so $T(B)$ is bounded.

Now, to prove $Tg$ is equicontinuous, take $s<t$. Recall that $\mathcal N^+_1$ is the set of \emph{all} children of the root, i.e.~when we keep all vertices regardless of their label. Then
\begin{align*}
     \vert T g(s)- T g(t) \vert &= \left\vert \mathbb{E}\left[\left.\sum\limits_{v \in \mathcal{N}_1} g (U_v) \right| U_{\rho}=s\right]  -\mathbb{E}\left[\left.\sum\limits_{v \in \mathcal{N}_1} g (U_v) \right| U_{\rho}=t\right] \right \vert \\
     &= \left\vert \mathbb{E}\left[\sum\limits_{v \in \mathcal N^+_1} \mathbbm{1}_{\{U_v>s-\theta\}} g (U_v)\right]  -\mathbb{E}\left[\sum\limits_{v \in \mathcal N^+_1}\mathbbm{1}_{\{U_v>t-\theta\}} g (U_v) \right] \right \vert \\
     &= \left\vert \mathbb{E}\left[\sum\limits_{v \in \mathcal N^+_1} \mathbbm{1}_{\{U_v \in (s-\theta, t -\theta]\}} g (U_v)\right] \right \vert \\
   &\leq  \mathbb{E}\left[\sum\limits_{v \in \mathcal N^+_1} \mathbbm{1}_{\{U_v \in (s-\theta, t -\theta]\}} \vert g (U_v) \vert\right] \\
   & \leq \Vert g \Vert m(t-s).
\end{align*}
We deduce from the Arzel\`a-Ascoli theorem that $T(B)$ is totally bounded, and therefore that $T$ is compact.
\end{proof}

We will now use Lemma \ref{compactoperator} to prove that $f_{m,\theta,\lambda}$ is non-negative when $\lambda$ is the lead eigenvalue of $T_{m,\theta}$.

\begin{proposition}
\label{non-negative eigenfunction}
    For any $m>1$ and $\theta\in(0,1)$, the operator $T_{m,\theta}$ defined in Lemma \ref{compactoperator} has a strictly positive largest real eigenvalue $\lambda$, and the eigenfunction $f_{m,\theta,\lambda}$ corresponding to $\lambda$ is non-negative.
\end{proposition}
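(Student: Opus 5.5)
We will apply the Krein--Rutman theorem to the operator $T=T_{m,\theta}$ on the Banach space $C_{[0,1]}$, ordered by the cone $K=\{g\in C_{[0,1]}: g\ge 0\}$. This cone is closed and convex, satisfies $K\cap(-K)=\{0\}$, and has nonempty interior (it contains the strictly positive functions), so it is in particular total. By Lemma \ref{compactoperator}, $T$ is compact. By \eqref{eq:manytoone} we have $Tg(s)=m\int_{(s-\theta)\vee 0}^1 g(r)\,dr$, so $T$ is positive, i.e.\ $T(K)\subset K$. The Krein--Rutman theorem states that if the spectral radius $r(T)$ is strictly positive, then $r(T)$ is an eigenvalue of $T$ with an eigenvector in $K\setminus\{0\}$. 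The proof therefore reduces to two steps: showing $r(T)>0$, and identifying the resulting nonnegative eigenfunction with the normalised $f_{m,\theta,\lambda}$ from Proposition \ref{eigenfuctiongeneralexp}.

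For $r(T)>0$, we test $T$ on the constant function $\mathbf 1$. For every $s$ we have $T\mathbf 1(s)=m(1-(s-\theta)\vee 0)\ge m\theta$, so $T\mathbf 1\ge m\theta\,\mathbf 1$. Positivity of $T$ then gives $T^n\mathbf 1\ge (m\theta)^n\mathbf 1$ by induction. Hence $\|T^n\|\ge (m\theta)^n$, and Gelfand's formula yields $r(T)\ge m\theta>0$. Krein--Rutman now supplies $\lambda:=r(T)>0$ together with some $g\in K\setminus\{0\}$ satisfying $Tg=\lambda g$.

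Next we show that $\lambda$ is the largest real eigenvalue. Any real eigenvalue $\mu$ of $T$ satisfies $|\mu|\le r(T)=\lambda$, so no real eigenvalue exceeds $\lambda$.

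Finally we identify $g$ with the normalised eigenfunction. This requires $g(0)>0$, so that we can set $f=g/g(0)$, which has $f(0)=1$ and satisfies \eqref{eq:efn_defn}. From $\lambda g(s)=m\int_{(s-\theta)\vee0}^1 g$ we get $\lambda g(0)=m\int_0^1 g$. Since $g\ge0$, $g$ is continuous and $g\not\equiv0$, the integral is strictly positive, so $g(0)>0$. By Proposition \ref{eigenfuctiongeneralexp}, $f$ then coincides with $f_{m,\theta,\lambda}$, because that proposition determines the normalised eigenfunction uniquely via \eqref{eq:efn_recurse}, and $f$ is nonnegative.

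The main obstacle is checking that the hypotheses of the Krein--Rutman version being invoked are met: compactness, positivity with respect to a total cone, and $r(T)>0$. Compactness is already given by Lemma \ref{compactoperator}. The step most easily overlooked is the nonvanishing spectral radius, since Krein--Rutman gives nothing for quasinilpotent operators; the comparison $T\mathbf 1\ge m\theta\mathbf 1$ is the simplest way to settle it. The hypothesis $m>1$ is not needed for this step.
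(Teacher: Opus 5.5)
Your proof is correct. It rests on the same result as the paper's: Krein--Rutman for the compact operator $T$ of Lemma~\ref{compactoperator} on $C_{[0,1]}$, ordered by the cone of nonnegative functions.

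The genuine difference is how you show $r(T)>0$. The paper gets this from the explicit formula of Proposition~\ref{eigenfuctiongeneralexp}. Imposing the normalisation $\frac{m}{\lambda}\int_0^1 f=1$ there gives a monic polynomial equation in $\lambda$ of degree $\lfloor 1/\theta\rfloor+1\ge 2$. The paper checks that this polynomial has a nonzero (possibly complex) root, and the converse half of that proposition makes such a root a nonzero eigenvalue. Your comparison $T\mathbf{1}\ge m\theta\,\mathbf{1}$, iterated using positivity and combined with Gelfand's formula, is self-contained. It needs neither the eigenfunction formula nor any discussion of complex roots, and it gives the quantitative bound $r(T)\ge m\theta$.

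You also make explicit three points the paper leaves tacit:
\begin{enumerate}
\item $T(K)\subset K$, which Krein--Rutman requires.
\item $r(T)$ is the largest real eigenvalue, since every eigenvalue has modulus at most $r(T)$.
\item The Krein--Rutman eigenvector $g$ satisfies $g(0)=\frac{m}{\lambda}\int_0^1 g>0$. So it can be normalised, and it then coincides with $f_{m,\theta,\lambda}$ by the uniqueness part of Proposition~\ref{eigenfuctiongeneralexp}.
\end{enumerate}
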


\begin{proof}
Proposition \ref{eigenfuctiongeneralexp} tells us that the functions $f_{m,\theta,\lambda}$ defined by \eqref{eq:efn_recurse} are eigenfunctions of $T$ provided that $\frac{m}{\lambda}\int_0^1 f(s) ds = 1$. Applying this constraint to \eqref{eq:efn_recurse} and rearranging, we obtain the following polynomial equation in $\lambda$:
\[\sum_{i=0}^{\lfloor 1/\theta\rfloor} \frac{(-1)^i}{(i+1)!} m^{i+1}  (1-i\theta)^{i+1} \lambda^{\lfloor 1/\theta\rfloor-i} = \lambda^{\lfloor 1/\theta\rfloor+1}.\]
It is easy to see that this equation has non-zero roots, since it has order at least $2$, non-zero constant coefficient for all values of $\theta$ such that $1/\theta\not\in\mathbb{N}$, and non-zero coefficient of $\lambda$ for all values of $\theta\in(0,1]$. Thus $T$ has positive spectral radius.

    Recall that $C_{[0,1]}$ is the set of continuous functions on $[0,1]$ equipped with the uniform norm. Define $K \subset C_{[0,1]}$ to be the set of non-negative continuous functions,
    $$K=\{f\in C_{[0,1]} : f(u)\ge 0\,\,\forall u\in[0,1]\}.$$
    Then $K$ is a convex cone (a subset of a vector space closed under linear combinations with positive coefficients), $K \cap -K= \{0\}$, and the set $\{u-v: u,v \in K\}$ is equal to $C_{[0,1]}$. By Lemma \ref{compactoperator}, the operator $T$ is compact. Therefore, by the Krein–Rutman Theorem, the spectral radius of $T$ is a strictly positive real eigenvalue corresponding to a non-negative eigenfunction.
\end{proof}
 
\begin{lemma}
\label{lemmaf1positive}
Suppose that $\lambda$ is the largest real eigenvalue of $T_{m,\theta}$, whose existence is guaranteed by Proposition \ref{non-negative eigenfunction}. Then the corresponding eigenfunction $f_{m,\theta,\lambda}$ is decreasing, i.e.~$f_{m,\theta,\lambda}(u)\ge f_{m,\theta,\lambda}(s)$ whenever $0\le u\le s\le 1$; and $f_{m,\theta,\lambda}(1)>0$.
\end{lemma}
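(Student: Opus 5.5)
Let $f=f_{m,\theta,\lambda}$ be the non-negative eigenfunction from Proposition \ref{non-negative eigenfunction}, with $\lambda>0$. The plan is to read both claims off the integral equation \eqref{eq:f_integral_eqn},
\[ f(u)=\frac{m}{\lambda}\int_{(u-\theta)\vee 0}^1 f(s)\,ds, \qquad u\in[0,1], \]
which $f$ satisfies by \eqref{eq:manytoone}, extended to the endpoints by continuity.

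\textbf{Step 1 (normalisation).} First I check that $f\not\equiv 0$ forces $f(0)>0$, so that we may normalise $f(0)=1$ as in Proposition \ref{eigenfuctiongeneralexp}. From the equation, $f(0)=\frac{m}{\lambda}\int_0^1 f(s)\,ds$. Since $f\ge 0$ is continuous and not identically zero, this integral is strictly positive, so $f(0)>0$.

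\textbf{Step 2 (monotonicity).} Take $0\le u\le s\le 1$. The interval of integration $[(s-\theta)\vee 0,1]$ is contained in $[(u-\theta)\vee 0,1]$. Because $f\ge 0$ and $m/\lambda>0$, it follows that $f(s)\le f(u)$. Equivalently, the derivative $f'(u)=-\frac{m}{\lambda}f(u-\theta)\le 0$ for $u>\theta$, and $f'=0$ on $(0,\theta)$.

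\textbf{Step 3 ($f(1)>0$).} This is the only real content. We have
\[ f(1)=\frac{m}{\lambda}\int_{1-\theta}^1 f(s)\,ds. \]
Since $f$ is continuous and non-negative, it suffices to find some $s\in(1-\theta,1]$ with $f(s)>0$. Suppose instead that $f\equiv 0$ on $[1-\theta,1]$. Let $u^*=\inf\{u: f(u)=0\}$; by monotonicity $f$ vanishes on $[u^*,1]$, and $f(u^*)=0$ by continuity. Step 1 gives $f(0)>0$, so $u^*>0$. Applying the equation at $u^*$,
\[ 0=f(u^*)=\frac{m}{\lambda}\int_{(u^*-\theta)\vee 0}^1 f(s)\,ds \ge \frac{m}{\lambda}\int_{(u^*-\theta)\vee 0}^{u^*} f(s)\,ds. \]
Here $f>0$ on $[0,u^*)$, and the interval $[(u^*-\theta)\vee 0,u^*)$ has positive length because $u^*>0$ and $\theta>0$. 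The last integral is therefore strictly positive, which is a contradiction. Hence $f>0$ everywhere, and in particular $f(1)>0$.

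The main obstacle is this positivity step. It hinges on $\theta>0$, so that the integration window has positive length, and on the non-triviality $f(0)>0$ from Step 1. Everything else follows directly from $f\ge0$ in the integral equation.
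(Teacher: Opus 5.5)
Your proof is correct and follows essentially the same route as the paper: you get monotonicity from the nested integration windows in \eqref{eq:f_integral_eqn}, then derive a contradiction by evaluating the integral equation at the infimum of the zero set, where $f$ is strictly positive on a window of positive length just to the left. The only cosmetic differences are that you derive $f(0)>0$ directly from $f\not\equiv 0$ (the paper instead uses the normalisation $f\equiv 1$ on $[0,\theta]$ to get $t_0>\theta$), and your contradiction $0=f(u^*)>0$ closes more directly than the paper's final step.
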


\begin{proof}
We recall \eqref{eq:f_integral_eqn}, which said that any eigenfunction must satisfy
\[f(u) = \frac{m}{\lambda} \int_{(u-\theta)\vee 0}^1 f(t) dt.\]
Since, by Proposition \ref{non-negative eigenfunction}, $f(t)\ge 0$ for all $t$, it follows that $f$ is decreasing in $u$; it also follows that $f$ is continuous. Now suppose, for a contradiction, that $f(t)=0$ for some $t\in(0,1]$. Let
\[t_0 = \inf\{t\in(0,1] : f(t) = 0\}.\]
Since $f(t)=1$ for all $t\le \theta$, and $f$ is continuous, we must have $t_0>\theta$. Then by \eqref{eq:f_integral_eqn},
\[f(t_0) = \frac{m}{\lambda}\int_{(t_0-\theta)\vee 0}^1 f(s) ds \ge \frac{m}{\lambda}\int_{(t_0-\theta)\vee 0}^{t_0} f(s) ds > 0.\]
If $t_0=1$ then we are done; on the other hand, if $t_0<1$, then by continuity (which also follows from \eqref{eq:f_integral_eqn}), there must exist $\eps>0$ such that $f(t_0+\eps)>0$, which contradicts the definition of $t_0$ since $f$ is decreasing. This completes the proof.
\end{proof}

\begin{proposition}\label{prop:m_implies_lambda}
Again let $\lambda_{m,\theta}$ be the largest real eigenvalue of the operator $T_{m,\theta}$ defined in Lemma \ref{compactoperator}. Let $m_c(\theta)$ be the smallest root of $Q_\theta(m)=0$, where $Q_\theta$ was defined in Theorem \ref{theomultitype}. Then $\lambda_{1,\theta} = 1/m_{c}(\theta)$. Consequently, by the scaling relation in Corollary \ref{cor:scaling_mlambda}, $\lambda_{m,\theta} = m/m_{c}(\theta)$, meaning $\lambda_{m,\theta}>1$ if $m>m_{c}(\theta)$, $\lambda_{m,\theta}<1$ if $m<m_{c}(\theta)$, and $\lambda_{m_{c}(\theta),\theta}=1$.
\end{proposition}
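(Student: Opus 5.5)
The plan is to show that nonzero real eigenvalues $\lambda$ of $T_{1,\theta}$ correspond exactly to real roots $x=1/\lambda$ of $Q_\theta$. The largest real eigenvalue then corresponds to the smallest root, and the statement for general $m$ follows from Corollary \ref{cor:scaling_mlambda}. Throughout, take $m=1$.

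\textbf{Step 1: every eigenfunction can be normalised.} Let $g$ be any eigenfunction of $T_{1,\theta}$ with real eigenvalue $\lambda\neq0$. By \eqref{eq:f_integral_eqn}, $g(u)=\frac1\lambda\int_{(u-\theta)\vee0}^1 g$.
\begin{itemize}
\item This forces $g$ to be constant, equal to $g(0)$, on $[0,\theta]$.
\item For $u\ge\theta$ it gives $g'(u)=-\frac1\lambda g(u-\theta)$.
\item If $g(0)=0$, induction over the intervals $[j\theta,(j+1)\theta)$ gives $g\equiv0$.
\end{itemize}
Hence $g(0)\neq0$, and we may rescale so that $g(0)=1$. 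Proposition \ref{eigenfuctiongeneralexp} then says that $g$ is given by \eqref{eq:efn_recurse}.

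\textbf{Step 2: the eigenvalue condition is $Q_\theta(1/\lambda)=0$.} Write $x=1/\lambda$. Formula \eqref{eq:efn_recurse} can be written compactly as
\[f(u)=\sum_{i\ge0}\frac{(-x)^i}{i!}\,(u-i\theta)_+^{\,i},\]
because the terms with $i\theta>u$ vanish. Integrating term by term,
\[\int_0^1\frac{(u-i\theta)_+^{\,i}}{i!}\,du=\frac{(1-i\theta)^{i+1}}{(i+1)!}\qquad\text{for } i\theta\le1,\]
and the integral is $0$ otherwise. The normalisation $x\int_0^1 f=1$ from Proposition \ref{eigenfuctiongeneralexp} becomes
\[\sum_{i\ge0}(-1)^i x^{i+1}\frac{(1-i\theta)^{i+1}}{(i+1)!}=1.\]
Substitute $j=i+1$ and move everything to one side. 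This is exactly $Q_\theta(x)=0$, where the $j=0$ term supplies the constant $1$. The range of summation matches: the constraint $(j-1)\theta\le1$ is the same as $j\le\lfloor1/\theta\rfloor+1$.

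When $1/\theta\in\mathbb N$, the top term has $(1-(j-1)\theta)=0$ and contributes nothing. I would check explicitly that this boundary case causes no problem.

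Conversely, by the second part of Proposition \ref{eigenfuctiongeneralexp}, every root $x$ of $Q_\theta$ yields an eigenpair with $\lambda=1/x$; note $x\neq0$ since $Q_\theta(0)=1$. So nonzero real eigenvalues are in bijection with real roots of $Q_\theta$ via $\lambda=1/x$.

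\textbf{Step 3: the largest eigenvalue is the reciprocal of the minimal root.} For $x\le0$, every term of $Q_\theta(x)$ is non-negative: $(-x)^j\ge0$ and $(1-(j-1)\theta)^j\ge0$ in the allowed range. The $j=0$ term equals $1$, so $Q_\theta(x)\ge1$ there. Hence all real roots are positive.

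Proposition \ref{non-negative eigenfunction} provides a strictly positive largest real eigenvalue $\lambda_{1,\theta}$, so a positive root exists. Since $x\mapsto1/x$ is decreasing on $(0,\infty)$, the largest eigenvalue corresponds to the smallest root, giving $\lambda_{1,\theta}=1/m_c(\theta)$. This also justifies reading ``minimal root'' in Theorem \ref{theomultitype} as the minimal positive root.

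\textbf{Step 4: general $m$.} Corollary \ref{cor:scaling_mlambda} gives $\lambda_{m,\theta}=m\lambda_{1,\theta}=m/m_c(\theta)$. The three cases $m>m_c(\theta)$, $m<m_c(\theta)$ and $m=m_c(\theta)$ then follow immediately.

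The main care points are Step 1, ensuring no eigenfunction escapes the normalisation $f(0)=1$, and the bookkeeping of the truncation index in Step 2, especially when $1/\theta\in\mathbb N$. Everything else is routine.
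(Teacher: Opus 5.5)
Your proposal is correct and follows essentially the same route as the paper. The paper also integrates \eqref{eq:efn_recurse} term by term, to get $P_\theta(m,1)=m\int_0^1 f_{m,\theta,1}(s)\,ds-1=-Q_\theta(m)$ (you run the same computation with $m=1$ fixed and $\lambda$ free), and then applies the scaling in Corollary~\ref{cor:scaling_mlambda}. Your Steps~1 and~3 go further than the paper: every eigenfunction normalises to $f(0)=1$, every real root of $Q_\theta$ is positive, and $x\mapsto 1/x$ reverses order, which is exactly the justification the paper leaves implicit for why the minimal root gives the \emph{largest} eigenvalue. One small point: Proposition~\ref{non-negative eigenfunction} is stated only for $m>1$, so for $m=1$ you should either rerun its Krein--Rutman argument or use $T_{m,\theta}=mT_{1,\theta}$.
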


\begin{proof}
    Let $P_{\theta}(m,\lambda)=\frac{m}{\lambda}\int_{0}^{1}f_{m,\theta,\lambda}(s)ds-1$, where $f_{m,\theta,\lambda}$ is the function given by \eqref{eq:efn_recurse}. Let $\overline{m}_{c}(\theta)$ be the smallest $m$ such that $P_{\theta}(m,1)=0$. First, we show that $\overline{m}_{c}(\theta)=m_{c}(\theta)$. We want to show that the smallest $m$ such that $P_{\theta}(m,1)=0$ is also the smallest $m$ such that $Q_{\theta}(m)=0$. By algebraic manipulation of \eqref{eq:efn_recurse},
\begin{align*}
   P_\theta(m,1) = m \int^1_0 f_{m,\theta,1}(s)ds - 1 & = m \sum_{j=0}^{\lfloor 1/\theta\rfloor} \int_{j\theta}^{(j+1)\theta\wedge 1} f_{m,\theta,1}(s)ds - 1\\
   & = m \sum_{j=0}^{\lfloor 1/\theta\rfloor} \int_{j\theta}^{(j+1)\theta\wedge 1} \sum_{i=0}^{j} \frac{(-1)^{i}m^{i}(s-i\theta)^{i}}{i!} ds - 1\\    & = m\sum_{i=0}^{\lfloor 1/\theta\rfloor} \sum_{j=i}^{\lfloor 1/\theta\rfloor} \int_{j\theta}^{(j+1)\theta\wedge 1} \frac{(-1)^{i}m^{i}(s-i\theta)^{i}}{i!} ds - 1\\
   & =  m\sum_{i=0}^{\lfloor 1/\theta\rfloor} \frac{(-1)^{i}m^{i}}{i!} \int_{i\theta}^{1} (s-i\theta)^{i} ds - 1\\
   & = m\sum_{i=0}^{\lfloor 1/\theta\rfloor} \frac{(-1)^{i}m^{i}}{(i+1)!}(1-i\theta)^{i+1} - 1\\
   & = \sum_{k=0}^{\lfloor 1/\theta\rfloor + 1} \frac{(-1)^{k-1} m^k}{k!} (1-(k-1)\theta)^k = -Q_\theta(m).
\end{align*}
Thus $\overline{m}_{c}(\theta)=m_{c}(\theta)$. This implies that when $m=m_{c}(\theta)$, $\lambda=1$ is a root of $P_{\theta}(m_{c}(\theta),\lambda)=0$. Therefore, the lead eigenvalue for $m_{c}(\theta)$ is $\lambda_{m_{c}(\theta),\theta}=1$. By the scaling relation from Corollary \ref{cor:scaling_mlambda}, we then know that $1=\lambda_{m_{c}(\theta),\theta}=m_{c}(\theta)\lambda_{1,\theta}$. It then immediately follows from the scaling relation that $\lambda_{m,\theta}=m\lambda_{1,\theta}=m/m_{c}(\theta)$. Thus, $\lambda_{m,\theta}>1$ when $m>m_{c}(\theta)$, and $\lambda_{m,\theta}<1$ when $m<m_{c}(\theta)$.
\end{proof}

\begin{proof}[Proof of survival in Theorem \ref{theomultitype}: there is RMF accessibility percolation 
when $m>m_c(\theta)$]

The proof is now very similar to the case $\theta>1/2$. As in the $\theta>1/2$ case, we let $H(s,n)=\frac{1}{\lambda^n} f_{m,\theta,\lambda}(s)$ and $W_n=\sum_{v \in \mathcal{N}_n} H\left(U_v,n\right)$, and note that
\[
A(x)= \sum_{i=1}^{\infty} \mathbbm{1}_{\left\{H\left(\zeta_i\right) x>1\right\}}= \sum\limits_{i=1}^\infty \mathbbm{1}_{\{x f_{m,\theta, \lambda}(\zeta_i) >\lambda^i\}} \leq \sum\limits_{i=1}^\infty \mathbbm{1}_{\{x > \lambda^i\}}. 
\]
Provided that $\lambda>1$, we have $A(x)\le \frac{\log x}{\log\lambda}$. We choose $L(x)= 1+\log_+ x$. Then clearly $A(x)/L(x)$ is bounded above provided that $\lambda>1$.

We recall that we write $\mathcal N^+_1$ for the set of children of the root without deletions. Let $X^* = |\mathcal N^+_1|/ \lambda f_{m,\theta,\lambda}(1)$. One of the assumptions of the theorem was that $\E[|\mathcal N^+_1|\log_+ |\mathcal N^+_1|]<\infty$, and therefore since $f_{m,\theta,\lambda}(1)>0$ by Lemma \ref{lemmaf1positive} we have $\mathbb{E}[X^*L(X^*)]< \infty$. If we define
$$X=\frac{\sum_{v \in\mathcal{N}(1)} H\left(U_v,1\right)}{H\left( U_\rho, 0\right)} =\frac{W_1}{W_0},$$
by Lemma \ref{lemmaf1positive} $f_{m,\theta,\lambda}$ is decreasing with $f_{m,\theta,\lambda}(1)>0$, and thus
$$
X \leqslant \frac{\frac{1}{\lambda} |\mathcal N^+_1|}{H(U_\rho,0)} \leqslant \frac{|\mathcal N^+_1|}{\lambda f_{m,\theta,\lambda}(1)} =X^*.
$$

By Proposition \ref{prop:m_implies_lambda} we know that if $m>m_c(\theta)$, then $\lambda>1$, and finally by Theorem \ref{Kyprianou theorem} we have that if $\lambda>1$, then $\mathbb{E}[\limsup\limits_{n\to\infty} W_n | \mathcal F_0] =W_0$, and therefore the process survives with strictly positive probability.
\end{proof}

\subsection{Extinction in Theorem \ref{theomultitype} }

To prove that the process does not survive, we can follow a standard approach. In the subcritical case, we can read off an exponential decay in the probability of survival from analysing the martingale $W_n$; and in the critical case, we observe that $W_n$ must converge, and show that the only possible finite limit is $0$.

\begin{proof}[Proof of extinction in Theorem \ref{theomultitype}: no RMF accessibility percolation for $m\le m_c(\theta)$]
We know by Proposition \ref{prop:m_implies_lambda} that $m\le m_c(\theta)$ corresponds to $\lambda_{m,\theta}\le 1$.

The subcritical case $\lambda_{m,\theta}<1$ (corresponding to $m<m_c(\theta)$) is almost immediate given the groundwork of Section \ref{sec:survival_gen}. We again let 
\[W_n=\sum\limits_{v \in \mathcal{N}_n} H\left(U_v, n\right)= \sum\limits_{v \in \mathcal{N}_n} \lambda^{-n} f\left(U_v\right).\]
By definition of $f$ we have that $(W_n)_{n\ge 0}$ is a martingale, and by Lemma \ref{lemmaf1positive}, $W_n \ge \lambda^{-n} N_n f(1)$. Therefore
\[\P(N_n\ge 1) \le \P(W_n \ge f(1)\lambda^{-n}) \le \frac{\E[W_n]}{f(1)\lambda^{-n}} = \frac{\lambda^n \E[W_0]}{f(1)}.\]
If $\lambda<1$, then $\P(N_n\ge 1)\to 0$, and by Proposition \ref{prop:m_implies_lambda}, this holds if $m<m_c(\theta)$. This completes the proof of Theorem \ref{theomultitype} in the case $\lambda<1$.

We now consider the critical case when the lead eigenvalue is $\lambda=1$. Then, as defined previously, $W_n = \sum_{v \in \mathcal{N}_n} f(U_v)$ is a non-negative martingale. By the martingale convergence theorem, $W_n$ converges almost surely to a random variable $W_\infty$, which is finite almost surely.

We recall from Lemma \ref{lemmaf1positive} that the eigenfunction $f$ is decreasing and satisfies $f(1)>0$. Therefore, for any $n \in \mathbb{N}$, we have the lower bound $W_n \ge N_n f(1)$. This implies that the number of accessible paths cannot grow to infinity: almost surely,
\[
\limsup_{n \rightarrow \infty} N_n \le \limsup_{n \rightarrow \infty} \frac{W_n}{f(1)} = \frac{W_{\infty}}{f(1)} < \infty.
\]

Take $\nu\in(0,1-\theta)$ and $k\in\mathbb{N}$ such that $p_k = \P(|\mathcal N^+_1|=k)>0$, i.e.~such that the offspring distribution of the underlying Galton-Watson tree gives strictly positive mass to $k$ children. We claim that for any $u\in [0,1]$,
\[\P( N_2 = 0 \, |\, U_\rho = u ) \ge p_k\cdot (1-\theta-\nu)^k\cdot p_k^k\cdot \nu^{k^2}.\]
Indeed, whatever the value of $U_\rho$, we have the possibility that the root has exactly $k$ children, all of whose uniforms are larger than $\theta+\nu$; and each of these children has exactly $k$ children, all of whose uniforms are smaller than $\nu$. This event has the probability specified above, and results in all particles being killed by generation 2.

By the Markov property we deduce that, every time $t$ that there are $j\ge 1$ particles in the system, there is probability at least $(p_k\cdot (1-\theta-\nu)^k\cdot p_k^k\cdot \nu^{k^2})^j$ that the system is extinct two generations later. Thus we cannot have
\[\limsup_{n \rightarrow \infty} N_n = j\]
for any $j\ge 1$, and therefore
\[\lim_{n \rightarrow \infty} N_n = 0,\]
completing the proof of Theorem \ref{theomultitype}.
\end{proof}

\subsection{Application to asymptotics of the probability that $h+1$ RMF labels are increasing}

We now apply our results on the calculation and estimation of the critical value for RMF accessibility percolation to prove Corollary \ref{cor:spectral_prob_est}, i.e.~to bound
\[\P(U_0<U_1+\theta<\ldots<U_h+h\theta)\]
from above and below.

\begin{proof}[Proof of Corollary \ref{cor:spectral_prob_est}]
Take $\theta\in(0,1)$. We again consider the critical case when the lead eigenvalue $\lambda=1$; we recall again that by Proposition \ref{prop:m_implies_lambda}, this corresponds to $m=m_c(\theta)$, and thus our martingale $(W_n)_{n\ge 0}$ satisfies
\[W_n = \sum_{v\in\mathcal{N}_n} f(U_v).\]
Moreover, from Lemma \ref{lemmaf1positive}, the eigenfunction $f$ is non-increasing, with $f(0)=1$ and $f(1)>0$. Thus (recalling that $N_h = |\mathcal N_h|$)
\[N_h f(1) \le W_h \le N_h.\]
Since $\E[W_h] = \E[W_0] = \E[f(U_\rho)]$, we deduce that
\[\E[f(U_\rho)] \le \E[ N_h ] \le \frac{\E[f(U_\rho)]}{f(1)}.\]

However, by the many-to-one or first moment formula,
\[\E[ N_h ] = m_c(\theta)^h \P(U_0<U_1+\theta<\ldots<U_h+h\theta),\]
and therefore
\[\frac{\E[f(U_\rho)]}{m_c(\theta)^h} \le \P(U_0<U_1+\theta<\ldots<U_h+h\theta) \le \frac{\E[f(U_\rho)]}{f(1) m_c(\theta)^h},\]
proving the first part of the result. The second part then follows from the bounds in Theorem \ref{thm:no_perco_small_theta} and Proposition \ref{prop:perco_GW}.
\end{proof}

\section{Rough Mount Fuji accessibility percolation on the lattice}

In this section, we consider Rough Mount Fuji accessibility percolation on the integer lattice $\mathbb{Z}^n$, and aim to prove the results of Section \ref{sec:zd_intro}.

We recall that in this setting, it is important to specify (a) which paths are permitted, and (b) which distance $d$ we are using in the definition of the RMF model (Definition \ref{RMF}).

We consider two cases for the permitted paths: the non-backtracking case, where the path is required to move further away from the origin with each step; or the case of all simple paths, i.e.~where paths may take some steps back towards the origin. In the latter case the system is no longer monotone in $\theta$ (see Figure \ref{fig:counterexample}) so we introduced two potentially different critical points
\[\theta_c = \inf\{\theta\in[0,1] : \P_\theta(\text{RMF accessibility percolation})>0\},\]
and
\[\theta'_c = \sup\{\theta\in[0,1] : \P_\theta(\text{RMF accessibility percolation})=0\}.\]

For the distance $d$, we will consider the $\ell^q$ distance for any $q\in[1,\infty]$; the case $q=1$ is of course the graph distance.

Our main goal is to find a non-trivial lower bound for $\theta_c$ and a non-trivial upper bound for $\theta'_c$, in all the cases mentioned above.

\subsection{Proof of Proposition \ref{prop:lattice_no_perc}: there is no Rough Mount Fuji accessibility percolation on the integer lattice for small $\theta$}

We recall Proposition \ref{prop:lattice_no_perc}, which gives lower bounds on the value of $\theta_c$ by showing that there are no infinite increasing paths when $\theta$ is sufficiently small. The proof is straightforward: we apply Proposition \ref{prop:path_upper_bd} with a union bound, similarly to the proof of Proposition \ref{prop:no_perco_GW} in Section \ref{sec:no_perco_GW}.

\begin{proof}[Proof of Proposition \ref{prop:lattice_no_perc}]

Assume that we have a graph of maximal degree $\Delta<\infty$. Since there are at most $\Delta(\Delta-1)^{h-1}$ vertices at distance $h$ from $0$, by Markov's inequality,
\begin{align*}
    \mathbb{P}( \exists \text{ an increasing path of length $h$}) & \leq \mathbb{E}[\# \text{ increasing paths of length $h$}]\\
    &\le \Delta(\Delta-1)^{h-1} \mathbb{P}(U_0< U_1 + \theta <\ldots < U_h+ h\theta).
    \end{align*}
By Proposition \ref{prop:path_upper_bd} we have
\[\mathbb{P}(U_0< U_1 + \theta <\ldots < U_h+ h\theta) \leq \frac{(1+ \theta h)^{h+1}}{(h+1)!}.\]
Thus, applying  Stirling's approximation,
\begin{align*}
   \mathbb{P}(\exists \text{ an increasing path of length $h$})& \leq \Delta(\Delta-1)^{h-1} \frac{(1+ \theta h)^{h+1}}{(h+1)!}\\
&\leq  \frac{\Delta(\Delta-1)^{h-1}(1+ \theta h)^{h+1}e^{h+1}}{(h+1)^{h+1}\sqrt{2\pi(h+1)}}\\ 
&\le \frac{1}{\sqrt{2\pi(h+1)}} \left(\frac{e(\Delta -1)}{h+1}+e(\Delta -1)\theta\right)^{h+1}. 
\end{align*}
We observe that this converges to zero if $\theta \le \frac{1}{(\Delta -1)e}$; indeed, for the boundary case $\theta = \frac{1}{(\Delta -1)e}$ we obtain $ \frac{1}{\sqrt{2\pi(h+1)}} \left(\frac{e(\Delta -1)}{h+1}+1\right)^{h+1}$. The second factor approaches $e^{e(\Delta-1)}$ as $h\to\infty$, and thus the whole expression tends to zero.

We now specialise to $\mathbb{Z}^n$. When allowing all simple paths, we have $\Delta= 2n$, which proves Proposition \ref{prop:lattice_no_perc} part (ii).

For non-backtracking paths, i.e.~part (i), by a union bound, there is positive probability of an increasing path on the whole lattice if and only if there is positive probability of an increasing path in the first (north-east) quadrant. In the first quadrant, non-backtracking paths are those whose co-ordinates do not decrease, and therefore the out-degree of each vertex is $n$. Thus by the same argument as above we have
\[\mathbb{P}( \exists \text{ an increasing path of length $h$}) \leq n^h \frac{(1+ \theta h)^{h+1}}{(h+1)!}\]
and again by the same argument as above this converges to zero if $\theta \le \frac{1}{ne}$.
\end{proof}

\begin{remark}
     While Proposition \ref{prop:lattice_no_perc} establishes extinction using the explicit bounds derived from Proposition \ref{prop:no_perco_GW}, one could alternatively frame this subcritical regime on the lattice using the multi-type branching process critical threshold $\lambda$ established in the previous section. Extinction is guaranteed when the associated maximal eigenvalue $\lambda_{\Delta-1,\theta}\le 1$.
\end{remark}

\subsection{Proof of Proposition \ref{prop:lattice_perc_l1}: RMF accessibility percolation with the graph distance via a simple coupling with oriented percolation}

Consider RMF percolation on $\mathbb{Z}^2$ with non-backtracking paths and using the graph distance $d(x,y) = \|x-y\|_1$. To establish that the critical threshold $\theta'_c$ is strictly smaller than 1, we employ a coupling argument with oriented percolation. The strategy is simple: recalling the RMF labelling from \eqref{eq:RMF}, we say that a vertex $v$ is \emph{open} if $U_v\in(0,\theta)$. We then observe that if a vertex in the first (north-east) quadrant is open, then by \eqref{eq:RMF}, its label $X_v$ is smaller than the vertices above and to the right; so an infinite up-right path of open vertices entails an infinite path whose labels are increasing. It is well-established that oriented site percolation has critical probability $p^\circ_c < 1$, and so we deduce that RMF percolation occurs for sufficiently large $\theta < 1$. We now provide the details.

\begin{proof}[Proof of Proposition \ref{prop:lattice_perc_l1}]
As set out above, we say that vertex $v$ is \emph{open} if $U_v\in(0,\theta)$. Suppose that $0=v_0,v_1,v_2,\ldots$ is a non-backtracking path, i.e.~$v_{j+1}$ is an up/right neighbour of $v_j$ for each $j=0,1,2,\ldots$. If $v_{j}$ is open, then
\[X_{v_{j+1}} = U_{v_{j+1}} + \theta d(0,v_{j+1}) \ge \theta (j+1) = \theta + \theta d(0,v_j) > U_{v_j} + \theta d(0,v_j) = X_{v_j}.\]
Thus if \emph{all} the vertices $v_0,v_1,v_2,\ldots$ are open, then the path is increasing, i.e.~$X_{v_0}<X_{v_1}<X_{v_2}<\ldots$ and accessibility percolation occurs. It therefore suffices to show that infinite open non-backtracking paths exist with positive probability.

Note that each vertex is open with probability $\theta$, independently of all other vertices, so existence of an infinite open non-backtracking path from $0$ is equivalent to the component of $0$ being infinite in oriented site percolation, as introduced in Section \ref{sec:zd_intro}, when the parameter $p$ is equal to $\theta$. Thus we deduce that $\theta'_c \le p^\circ_c$, where $p^\circ_c$ is the critical percolation parameter for oriented site percolation. Balister, Bollob\'as and Stacey \cite{balister1993upper} showed that $p^\circ_c \le 0.72599$, which completes the proof.
\end{proof}

This is essentially the same strategy as in \cite{duque2023rmf, hegarty2014existence}. The primary novelty of our work lies in extending this analysis to the more general $\ell^q$ case for $q>1$, which we carry out in the following section.

\subsection{Proof of Proposition \ref{prop:lattice_perc_l2}: RMF accessibility percolation with the $\ell^q$ distance for $q>1$ via the bricklayer coupling}\label{sec:bricklayer}

Writing $\mathbb{Z}_+ = \{0,1,2,\ldots\}$, we define the \emph{bricklayer lattice} to have vertices
\[V_{\mathbb{L}} = \{ (x,y) : y\in \mathbb{Z}_+,\, x-y/2 \in \mathbb{Z}_+\}\]
and directed edges
\[E_{\mathbb{L}} = \{ ((x,y),(x',y')) : x'=x+1 \text{ and } y=y', \text{ or } x'=x+1/2 \text{ and } y'=y+1\}. \]
We then create our bricks (see Figure \ref{fig:single_brick} for a visualisation): for each $(x,y)\in V_{\mathbb{L}}$ and $n\in 4\mathbb{N}$ (which we will eventually choose to be large), let
\[V^{(n)}_{x,y} = \{(a,b)\in\mathbb{Z}_+^2 : a\in \{xn, xn+1,\ldots,xn+n\},\, b\in\{2y, 2y+1, 2y+2\}.\]
To avoid dependence between edges and bricks, we only consider a subset of the possible directed edges within each brick. We have horizontal edges
\[\Hor = \{((a,b),(a',b')) \in (V_{x,y}^{(n)})^2 : a'=a+1 \text{ and either } b=b'=2y \text{ or } b=b'=2y+1\},\]
i.e.~the horizontal edges on the lower and middle layers of the brick. We then have the left-vertical edges
\[\LVer = \{((a,b),(a',b'))\in (V_{x,y}^{(n)})^2 : a'=a\in\{xn+\tfrac{n}{4},\ldots,xn+\tfrac{n}{2}-1\},\, b = 2y, \, b' = 2y+1\},\]
i.e.~the vertical edges between the lower layers of the brick that fall within the second horizontal quarter of the brick; and the right-vertical edges 
\[\RVer = \{((a,b),(a',b'))\in (V_{x,y}^{(n)})^2 : a'=a\in\{xn+\tfrac{n}{2},\ldots,xn+\tfrac{3n}{4}-1\},\, b = 2y+1, \, b' = 2y+2\},\]
i.e.~the vertical edges between the upper layers of the brick that fall within the third horizontal quarter of the brick.

\begin{figure}[h]
  \centering
  \includegraphics[width=0.7\textwidth]{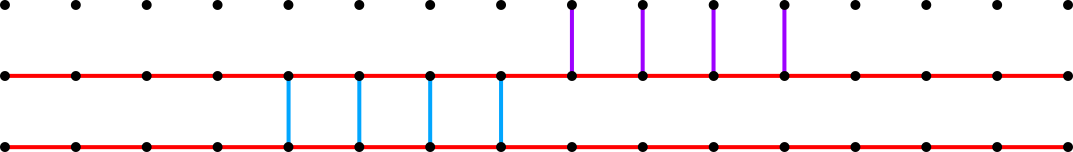}
  \caption{One brick, with the horizontal edges drawn in red, the left-vertical edges drawn in blue, and the right-vertical edges drawn in purple.}\label{fig:single_brick}
\end{figure}

We now introduce our coupling between the RMF labels (or, more precisely, the independent uniform random variables attached to each vertex) and an inhomogeneous and dependent oriented percolation model. We first do this in the case $q\in(1,\infty)$, and then later we will explain how to carry out the (easier) case $q=\infty$. Fix $q\in(1,\infty)$ and recall that $n\in 4\mathbb{N}$ will later be chosen to be large. For a horizontal oriented edge $((a,b),(a+1,b))$, say that the edge is open if $U_{(a,b)} \in (3\cdot\frac{5^q}{n^q},1-3\cdot\frac{5^q}{n^q})$. For a vertical oriented edge $((a,b),(a,b+1))$, say that the edge is open if $U_{(a,b)}<U_{(a,b+1)}$. Thus vertical edges are all open with probability $1/2$, whereas horizontal edges are open with probability close to $1$ when $n$ is large; and we note also that any two adjacent vertical edges are not independent, although the left-vertical and right-vertical edges within the bricks \emph{are} independent, since none of them are adjacent. We say that there is ``$n$-bricklayer percolation'' if there is an infinite path of open oriented edges in $\bigcup_{(x,y)\in V_{\mathbb{L}}} \left(\Hor\cup \LVer \cup \RVer\right)$. As mentioned above, this is an inhomogeneous and dependent form of oriented percolation.

For $(x,y)\in V_{\mathbb{L}}$, write $B_{x,y}^{(n)}$ for the brick defined above, i.e.~the oriented subgraph of $\mathbb{Z}_+^2$ consisting of vertices $V_{x,y}^{(n)}$ and edges $\Hor\cup \LVer \cup \RVer$. Note that the status of all edges within a brick $B_{x,y}^{(n)}$ are independent, and (for a fixed $n$) independent of all edges of other bricks $B_{x',y'}^{(n)}$, $(x',y')\in V_{\mathbb{L}}$. We say that a brick is \emph{good} if all its horizontal edges are open, at least one left-vertical edge is open, and at least one right-vertical edge is open. We then make three observations:
\begin{enumerate}
\item The probability that a brick is good satisfies
\[\P\left(B^{(n)}_{x,y} \text{ is good}\right) = \left(1-6\cdot\frac{5^q}{n^q}\right)^{2n} \big(1-2^{-n/4}\big)^2.\]
This follows from the facts that each horizontal edge is open with probability $1-6\cdot 5^q/n^q$ if $q\in(1,\infty)$, each vertical edge is open with probability $1/2$, and as mentioned above, edges within a brick are open independently of each other.
\item The events
\[\Big\{ \big\{B^{(n)}_{x,y} \text{ is good}\big\} : (x,y)\in V_{\mathbb{L}}\Big\}\]
are independent. This follows from the fact that, for fixed $n$, the status of edges in one brick is independent of the status of edges in any other brick.
\item A directed path of good bricks entails a path of directed open edges starting at the bottom-left corner of the first brick in the path, and ending at the middle-right vertex of the last brick in the path. See Figure \ref{fig:bricks_colour}.
\end{enumerate}

\begin{figure}[h]
  \centering
  \includegraphics[width=\textwidth]{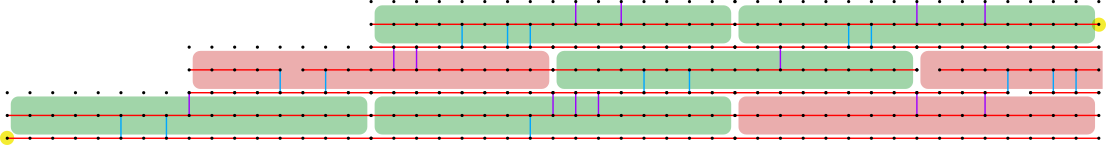}
  \caption{A small portion of the bricklayer lattice with each vertex replaced by a brick. Only open edges are drawn. Good bricks are coloured green and bad bricks are coloured red. A directed path of good bricks guarantees a path of open edges from the bottom-left of the first brick to the middle-right vertex of the last brick, both highlighted in yellow.}\label{fig:bricks_colour}
\end{figure}

\begin{lemma}\label{lem:inhom_perco}
The probability of $n$-bricklayer percolation tends to $1$ as $n\to\infty$.
\end{lemma}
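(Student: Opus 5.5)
The plan is to reduce $n$-bricklayer percolation to ordinary oriented site percolation on the bricklayer lattice $\mathbb{L}=(V_{\mathbb{L}},E_{\mathbb{L}})$, in which the site $(x,y)$ is declared open exactly when the brick $B^{(n)}_{x,y}$ is good. By the second observation above, these site variables are independent. By the first observation, each is open with probability
\[p_n:=\Big(1-6\cdot\tfrac{5^q}{n^q}\Big)^{2n}\big(1-2^{-n/4}\big)^2 .\]
Since $q>1$, we have $2n\cdot 6\cdot 5^q n^{-q}\to 0$, so $p_n\to 1$ as $n\to\infty$. By the third observation, an infinite directed path of good bricks starting from $(0,0)$ yields an infinite path of open oriented edges in $\bigcup_{(x,y)}(\Hor\cup\LVer\cup\RVer)$. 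Before using this, I will check that the end vertex of one brick (its middle-right vertex) coincides with, or is joined by open edges to, the starting vertex of each successor brick, for both types of lattice step. Granting this, it suffices to show that the probability of an infinite open directed path from the origin in oriented site percolation on $\mathbb{L}$ with parameter $p$ tends to $1$ as $p\to 1$.

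The lattice $\mathbb{L}$ is isomorphic to the standard two-dimensional oriented lattice: the map $(x,y)\mapsto(x-y/2,\,y)$ sends $V_{\mathbb{L}}$ bijectively onto $\mathbb{Z}_+^2$, and sends the steps $(1,0)$ and $(1/2,1)$ to $(1,0)$ and $(0,1)$ respectively. So I only need the standard fact that $\theta^\circ(p):=\P_p(\text{infinite open oriented path from }0)\to 1$ as $p\to 1$. I will prove this directly by a Peierls/contour argument rather than cite it. If the open oriented cluster $C$ of the origin is finite, consider the region it spans in $\mathbb{Z}_+^2$ and walk along its outer boundary. This gives a $*$-connected path of closed sites (one can use the boundary in the dual with diagonal adjacency) running from the $x$-axis to the $y$-axis and separating $C$ from infinity, together with the convention that the origin itself may be closed. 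A blocking path of length $k$ can be chosen in at most $k\cdot 8^{k}$ ways once its starting point on an axis is fixed, and its starting point lies within distance $k$ of the origin. Hence
\[1-\theta^\circ(p)\le \sum_{k\ge1} C k^2 8^k (1-p)^{k}\xrightarrow[p\to1]{}0 .\]
Alternatively, I could simply quote that $\theta^\circ$ is nondecreasing and tends to $1$, via the same contour estimate as in Durrett's treatment of oriented percolation.

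Combining the two steps, the probability of $n$-bricklayer percolation is at least $\theta^\circ(p_n)$, which tends to $1$ as $n\to\infty$.

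The main obstacle I expect is making the contour argument correct on the oriented lattice: the cluster of the origin in oriented percolation is not described by an unoriented boundary in the same way as for Bernoulli percolation. I would handle this by defining the outer boundary via the set $\{v:\,v\text{ not reachable}\}$ adjacent to the reachable set from below or left. I would then verify that this boundary is a $*$-connected closed chain meeting both axes (or at least meeting the $x$-axis, where closed sites are forced). A lesser obstacle is checking the third observation carefully at brick interfaces, in particular that the middle-right vertex of brick $(x,y)$ is exactly the bottom-left vertex of brick $(x+1/2,y+1)$ and is connected into brick $(x+1,y)$ via horizontal edges on the middle layer.
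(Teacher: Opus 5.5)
Your reduction is the same as the paper's: good bricks are independent sites of the bricklayer lattice, and that lattice is isomorphic to the oriented quadrant via $(x,y)\mapsto(x-y/2,y)$. Since $q>1$ you get $p_n\to 1$, and you conclude from the fact that oriented site percolation on $\mathbb{Z}_+^2$ percolates with probability tending to $1$ as $p\to 1$. The paper simply quotes that fact, so your fallback of citing it gives a complete proof. However, both things you propose to verify by hand are false as stated.

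\textbf{The contour claim fails for oriented percolation.} A finite cluster $C$ does not yield a $*$-connected chain of closed sites joining the two axes. Take $(0,0),(0,1),(0,2),(1,2),(2,2),(3,2)$ open, $(1,0),(1,1),(0,3),(1,3),(2,3),(3,3),(4,2)$ closed, and every other site open.
\begin{itemize}
\item $C$ is exactly the six listed open sites, and the sites lying just above or to the right of $C$ are exactly the seven closed ones.
\item These closed sites split into two $*$-components, $\{(1,0),(1,1)\}$ and $\{(0,3),(1,3),(2,3),(3,3),(4,2)\}$, and neither meets both axes.
\item The open sites $(2,1),(3,1),(4,1),\ldots$ are unreachable only because they can be entered from $C$ only by a downward step. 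No closed chain witnesses this, so your bound $\sum_k Ck^2 8^k(1-p)^k$ is unjustified.
\end{itemize}
The standard repair (Durrett's contour argument) only needs a positive fraction of the contour to be closed. Let $(0,h)$ and $(r,0)$ be the last sites of $C$ on the two axes. Traverse the outer boundary of $\bigcup_{v\in C}(v+[-\frac12,\frac12]^2)$ from $(-\frac12,h+\frac12)$ to $(r+\frac12,-\frac12)$, keeping $C$ on the right.
\begin{itemize}
\item An east step runs along the bottom edge of some $u+(0,1)\notin C$, and a south step along the left edge of some $u+(1,0)\notin C$, with $u\in C$. These sites are closed; west and north steps carry no information.
\item The net displacement gives $\#E-\#W=r+1\ge 1$ and $\#S-\#N=h+1\ge 1$, so more than half of the $L$ steps are east or south.
\item Each closed site is seen by at most two such steps, so at least $L/4$ distinct closed sites occur.
\end{itemize}
Hence $1-\theta^\circ(p)\le (1-p)+\sum_{L\ge 1}4L\,3^{L}(1-p)^{L/4}$, which tends to $0$ as $p\to1$.

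\textbf{The brick-interface claim is also false.} The middle-right vertex of $B^{(n)}_{x,y}$ is $(xn+n,2y+1)$, while the bottom-left vertex of $B^{(n)}_{x+1/2,y+1}$ is $(xn+\frac n2,2y+2)$. Moreover, from $(xn+n,2y+1)$ no oriented path can reach the left-vertical edges of $B^{(n)}_{x+1/2,y+1}$, which sit in columns $xn+\frac{3n}{4},\ldots,xn+n-1$.

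The diagonal transition must be made inside $B^{(n)}_{x,y}$. Once on the middle row, climb an open right-vertical edge at a column in $\{xn+\frac n2,\ldots,xn+\frac{3n}4-1\}$. This lands on the bottom row of $B^{(n)}_{x+1/2,y+1}$ within its first quarter, to the left of its left-vertical edges.

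So the invariant to induct on is that the path enters each brick either at its middle-left vertex (after a horizontal lattice step) or on its bottom row within the first quarter (after a diagonal step). From either entry, a good brick lets you reach both its middle-right vertex and its top row in the third quarter. The paper's third observation only names the endpoints of the whole path (bottom-left of the first brick, middle-right of the last), which is consistent with this invariant.
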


\begin{proof}
We note that since $q>1$, from our first observation above, the probability that a brick is good converges to $1$ as $n\to\infty$. By our second observation, each brick is independently good. Furthermore, the bricklayer lattice $(V_{\mathbb{L}}, E_{\mathbb{L}})$ is homeomorphic to the graph consisting of vertices $\mathbb{Z}_+^2$ with all up and right edges, i.e.~the first quadrant of the usual oriented square lattice. Since oriented site percolation on $\mathbb{Z}_+^2$ has an infinite component with probability tending to $1$ as the percolation probability converges to $1$, we deduce that there exists a directed path of good bricks with probability tending to $1$ as $n\to\infty$. Finally, by our third observation above, a directed path of good bricks entails existence of an infinite path of directed open edges starting from $(0,0)$ that remains within the union of all the (good) bricks; that is, there is $n$-bricklayer percolation.
\end{proof}

The final observation that we would \emph{like} to make is that if $\theta$ is sufficiently close to $1$, open edges in the inhomogeneous dependent oriented percolation model correspond to edges along which the RMF labels are increasing. If this were true, then an infinite open path in the percolation model would entail an infinite increasing path in RMF accessibility percolation. It is, unfortunately, not necessarily true for \emph{all} edges; but it is true for those within bricks sufficiently far to the right. This will be enough to complete the proof by extending the path to $0$ via an additional finite segment.

\begin{lemma}\label{lem:pdist}
For $q\in(1,\infty)$ and $n>5$, choose $A_q(n)$ large enough that for all $a\ge A_q(n)$,
\[\left(1+\frac{q}{a}\right)^{1/q} \ge 1 + \left(1-\frac{5^q}{n^q}\right)\frac{1}{a}.\]
Suppose that $(x,y)\in V_{\mathbb{L}}$ with $x\ge 2$, and suppose that $(a,b)\in V^{(n)}_{x,y}$ satisfies $a\ge A_q(n)$. Then
\[d(0,(a+1,b)) - d(0,(a,b)) > 1 - 2\cdot\frac{5^q}{n^q}.\]
\end{lemma}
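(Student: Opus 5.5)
The plan is to compare $\|(a+1,b)\|_q$ with $\|(a,b)\|_q$ directly, using the fact that inside a brick with $x\ge 2$ the vertical coordinate $b$ is tiny compared with $a$. First we record the geometry. If $(a,b)\in V^{(n)}_{x,y}$ then $a\ge xn$ and $b\le 2y+2$. Membership $(x,y)\in V_{\mathbb{L}}$ gives $x\ge y/2$, so $y\le 2x$ and hence $b\le 4x+2$. Combined with $a\ge xn$ and $x\ge 2$, this gives $b/a\le (4x+2)/(xn)\le 5/n$; indeed $4x+2\le 5x$ precisely when $x\ge 2$. Consequently $b^q\le (5^q/n^q)\,a^q$.

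Next we write $D:=\|(a,b)\|_q=(a^q+b^q)^{1/q}$ and expand
\[
\|(a+1,b)\|_q^q=(a+1)^q+b^q\ge a^q+qa^{q-1}+b^q=D^q\Big(1+\frac{qa^{q-1}}{D^q}\Big),
\]
where we use $(a+1)^q\ge a^q+qa^{q-1}$, which holds since $q>1$. So $\|(a+1,b)\|_q\ge D\,(1+q a^{q-1}/D^q)^{1/q}$. To apply the hypothesis defining $A_q(n)$, which is stated in terms of $1+q/a'$ for $a'\ge A_q(n)$, we set $a':=D^q/a^{q-1}$. Since $D\ge a$ we have $a'\ge a\ge A_q(n)$. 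The hypothesis then yields
\[
\|(a+1,b)\|_q\ge D\Big(1+\big(1-\tfrac{5^q}{n^q}\big)\tfrac{a^{q-1}}{D^q}\Big),
\qquad\text{so}\qquad
\|(a+1,b)\|_q-D\ge \big(1-\tfrac{5^q}{n^q}\big)\Big(\frac{a}{D}\Big)^{q-1}.
\]

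Finally we bound $(a/D)^{q-1}=(1+b^q/a^q)^{-(q-1)/q}$ from below. Because $(q-1)/q\le 1$ and $b^q/a^q\le 5^q/n^q=:\varepsilon$, Bernoulli's inequality gives $(1+\varepsilon)^{-(q-1)/q}\ge (1+\varepsilon)^{-1}\ge 1-\varepsilon$. The difference in distances is therefore at least $(1-\varepsilon)^2=1-2\varepsilon+\varepsilon^2>1-2\varepsilon$. Here $\varepsilon>0$ because $b$ may be $0$ but $\varepsilon=5^q/n^q$ is fixed and positive, and $\varepsilon<1$ since $n>5$. This is exactly the claim.

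The main obstacle is matching the form of the hypothesis on $A_q(n)$. The natural expansion produces $q a^{q-1}/D^q$ rather than $q/a$, and the substitution $a'=D^q/a^{q-1}\ge a$ is what makes the assumption applicable. I expect this step to need care; if the substitution fails, the fallback is to prove the needed inequality directly from the concavity of $t\mapsto(1+t)^{1/q}$ for small $t$. The remaining steps are routine bookkeeping with $b/a\le 5/n$.
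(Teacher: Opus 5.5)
Your proposal is correct and follows essentially the same route as the paper's proof. Both use $(a+1)^q\ge a^q+qa^{q-1}$, both invoke the defining property of $A_q(n)$ at the point $(a^q+b^q)/a^{q-1}\ge a$, both derive $b\le 5a/n$ from $y\le 2x$ and $x\ge 2$, and both finish with $(1+\varepsilon)^{-(1-1/q)}\ge(1+\varepsilon)^{-1}\ge 1-\varepsilon$, which gives the bound $(1-\varepsilon)^2>1-2\varepsilon$.
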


\begin{remark}
The choice of $5^q/n^q$ when we choose $A_q(n)$, and in our definition of bricklayer percolation, is largely arbitrary. We could choose $A_q(n)$ such that the right-hand side of the first equation above is arbitrarily close (in $n$) to $1+1/a$, and that would give us a better bound in the second equation. However, we will see in the proof below that an error of size $(5/n)^q$ emerges from a rough geometric bound, so choosing $A_q(n)$ to allow another error of the same size simply minimizes notation.
\end{remark}

\begin{proof}
We have
\[d(0,(a+1,b)) = \left(a^q(1+\tfrac{1}{a})^q + b^q\right)^{1/q} \ge \left( a^q + qa^{q-1} + b^q\right)^{1/q} = \left(a^q + b^q\right)^{1/q}\left(1 + \frac{qa^{q-1}}{a^q+b^q}\right)^{1/q}.\]
Since $a \ge A_q(n)$, we also have $(a^q+b^q)/a^{q-1} \ge A_q(n)$, and we obtain from above that
\[d(0,(a+1,b)) \ge \left(a^q + b^q\right)^{1/q}\left( 1 + \left(1-\frac{5^q}{n^q}\right)\frac{a^{q-1}}{a^q + b^q} \right).\]
Thus
\begin{equation}\label{eq:first_dist_bd}
d(0,(a+1,b)) - d(0,(a,b)) \ge \left( 1-\frac{5^q}{n^q} \right)\frac{a^{q-1}}{(a^q + b^q)^{1-1/q}} = \left( 1-\frac{5^q}{n^q} \right)\frac{1}{(1 + (b/a)^q)^{1-1/q}}.
\end{equation}
Now, since $(a,b)\in V^{(n)}_{x,y}$, we have $b\le 2y+2$ and $a\ge nx$. Since $(x,y)\in V_{\mathbb{L}}$, we have $y\le 2x$, and therefore $b\le 4x+2 \le 5x \le 5a/n$, where we used that $x\ge 2$ for the second inequality. Substituting this into \eqref{eq:first_dist_bd}, we have
\begin{align*}
d(0,(a+1,b)) - d(0,(a,b)) &\ge \left( 1-\frac{5^q}{n^q} \right)\frac{1}{(1 + 5^q/n^q)^{1-1/q}}\\
&\ge \left( 1-\frac{5^q}{n^q} \right)\frac{1}{(1 + 5^q/n^q)} \ge \left( 1-\frac{5^q}{n^q} \right)^2
\end{align*}
and the result follows.
\end{proof}

\begin{corollary}\label{cor:open_incr}
Suppose that $q\in(1,\infty)$ and $1-5^q/n^q<\theta<1$. Suppose that $(x,y)\in V_{\mathbb{L}}$ with $x\ge 2$, and take $(a,b)\in V^{(n)}_{x,y}$ such that $a\ge A_q(n)$, where $A_q(n)$ was defined in Lemma \ref{lem:pdist}. If the oriented edge from $(a,b)$ to $(a+1,b)$ is open in the inhomogeneous dependent oriented percolation model, then its RMF labels are increasing; that is, $X_{(a+1,b)} > X_{(a,b)}$.
\end{corollary}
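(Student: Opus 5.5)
The plan is to combine Lemma \ref{lem:pdist} with the definition of an open horizontal edge. We write
\[X_{(a+1,b)} - X_{(a,b)} = U_{(a+1,b)} - U_{(a,b)} + \theta\big(d(0,(a+1,b)) - d(0,(a,b))\big),\]
and bound each piece from below.

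\textbf{Step 1 (the distance term).} By Lemma \ref{lem:pdist}, which applies since $x\ge 2$ and $a\ge A_q(n)$, the distance increment exceeds $1-2\cdot 5^q/n^q$. Since $\theta>1-5^q/n^q$ and the increment is positive, the drift term satisfies
\[\theta\big(d(0,(a+1,b)) - d(0,(a,b))\big) > \Big(1-\frac{5^q}{n^q}\Big)\Big(1-2\cdot\frac{5^q}{n^q}\Big) \ge 1 - 3\cdot\frac{5^q}{n^q}.\]
The last inequality holds because the product expands to $1-3\cdot 5^q/n^q + 2\cdot 5^{2q}/n^{2q}$.

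\textbf{Step 2 (the uniform term).} Openness of the edge means $U_{(a,b)} < 1-3\cdot 5^q/n^q$. We also have $U_{(a+1,b)}\ge 0$. Hence
\[U_{(a+1,b)} - U_{(a,b)} > -\Big(1-3\cdot\frac{5^q}{n^q}\Big).\]

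\textbf{Step 3 (conclusion).} Adding the bounds from Steps 1 and 2 gives $X_{(a+1,b)}-X_{(a,b)}>0$, which is the claim.

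Only the upper threshold on $U_{(a,b)}$ is needed here. The lower threshold $U_{(a,b)}>3\cdot 5^q/n^q$ in the definition of an open edge presumably serves elsewhere in the paper, for example for vertical edges or for edges where the distance does not increase.

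There is no real obstacle. The only care needed is in the constant bookkeeping of Step 1, namely checking that the product of the two $(1-O(5^q/n^q))$ factors is at least $1-3\cdot 5^q/n^q$, so that it matches the threshold in the definition of an open edge.
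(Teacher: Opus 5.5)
Your proof is correct and is essentially the paper's argument. The paper likewise uses $U_{(a+1,b)}\ge 0$, only the upper threshold $U_{(a,b)}<1-3\cdot 5^q/n^q$ from openness, and Lemma~\ref{lem:pdist} to get $X_{(a+1,b)}-X_{(a,b)}>\theta(1-2\cdot 5^q/n^q)-1+3\cdot 5^q/n^q$, which it calls positive ``by our choice of $\theta$''. Your expansion $(1-5^q/n^q)(1-2\cdot 5^q/n^q)=1-3\cdot 5^q/n^q+2\cdot 5^{2q}/n^{2q}$, together with the positivity of the distance increment, simply spells out that final step.
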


\begin{proof}
We recall that the edge from $(a,b)$ to $(a+1,b)$ is open if
\[U_{(a,b)} \in \left(3\cdot\frac{5^q}{n^q},\,1-3\cdot\frac{5^q}{n^q}\right).\]
Thus, applying Lemma \ref{lem:pdist},
\begin{align*}
X_{(a+1,b)} - X_{(a,b)} &= U_{(a+1,b)} + \theta d(0,(a+1,b)) - U_{(a,b)} - \theta d(0,(a,b))\\
&\ge 0 + \theta d(0,(a+1,b)) - 1+3\cdot\frac{5^q}{n^q} - \theta d(0,(a,b))\\
&>  \theta \left(1 - 2\cdot\frac{5^q}{n^q}\right) - 1+3\cdot\frac{5^q}{n^q}.
\end{align*}
By our choice of $\theta$, this is strictly positive, completing the proof.
\end{proof}

\begin{proof}[Proof of Proposition \ref{prop:lattice_perc_l2}]
We first do the case $q\in(1,\infty)$. By Lemma \ref{lem:inhom_perco}, for any $\eps>0$ we can choose $n$ large enough that the probability of $n$-bricklayer percolation is larger than $1-\eps$. This gives us an infinite path of open edges in the inhomogeneous dependent oriented percolation model that remains within the union of the bricks. By Corollary \ref{cor:open_incr}, if we take $\theta\in(1-5^q/n^q,1)$ then for all except finitely many of the open horizontal edges, the RMF labels are increasing across the edge; and directly from the definition of our coupling, any open vertical edge has RMF labels increasing across the edge. We therefore have an infinite increasing path started from some point at distance at most $2A_q(n)$ from $0$. By further increasing $\theta$ if necessary, we can ensure that every oriented edge within distance $2A_q(n)$ of $0$ has increasing RMF labels. This completes the proof in the case $q\in(1,\infty)$.

In the case $q=\infty$, things are slightly simpler since every horizontal edge below the diagonal $x=y$ moves us distance exactly $1$ further from the origin. We make a rather arbitrary choice of coupling, saying that a horizontal oriented edge from $(a,b)$ to $(a+1,b)$ is open if $U_{(a,b)} \in (n^{-2},1-n^{-2})$, and as in the $q\in(1,\infty)$ case, a vertical oriented edge from $(a,b)$ to $(a,b+1)$ is open if $U_{(a,b)} < U_{(a,b+1)}$. We then have
\[\P\left(B^{(n)}_{x,y} \text{ is good}\right) = \left(1-2n^{-2}\right)^{2n} \big(1-2^{-n/4}\big)^2\]
and therefore the probability that a brick is good converges to $1$ as $n\to\infty$. Our other observations before Lemma \ref{lem:inhom_perco} still hold, and therefore Lemma \ref{lem:inhom_perco} goes through without further changes. Then as above, for any $\eps>0$, we can choose $n\ge 5$ large enough that the probability of $n$-bricklayer percolation is larger than $1-\eps$. Take $\theta\in(1-n^{-2},1)$. Then by the same argument as in Lemma \ref{lem:pdist}, for any $(x,y)\in V_{\mathbb{L}}$ with $x\ge 2$ and $n\ge 5$, if $(a,b)\in V_{x,y}^{(n)}$ then $b\le a$ and therefore
\[d(0,(a+1,b)) - d(0,(a,b)) = 1.\]
Then as in the proof of Corollary \ref{cor:open_incr}, if the oriented edge from $(a,b)$ to $(a+1,b)$ is open, we have
\begin{align*}
X_{(a+1,b)} - X_{(a,b)} &= U_{(a+1,b)} + \theta d(0,(a+1,b)) - U_{(a,b)} - \theta d(0,(a,b))\\
&\ge 0 + \theta d(0,(a+1,b)) - 1+n^{-2} - \theta d(0,(a,b))\\
&=  \theta  - 1+n^{-2} > 0
\end{align*}
by our choice of $\theta$, so the RMF labels are increasing across the edge. And again directly from the definition of the coupling, any open vertical edge has RMF labels that increase across the edge. The remainder of the proof proceeds as in the case $q\in(1,\infty)$.
\end{proof}

\section*{Acknowledgments}
DDAB was supported by a scholarship from the EPSRC Centre for Doctoral Training in Statistical Applied Mathematics at Bath (SAMBa), under the project EP/S022945/1. MR was supported in part by a Royal Society University Research Fellowship, grant URF\textbackslash R\textbackslash 211038, and in part by EPSRC grant EP/X040089/1. Both authors are very grateful to Andreas Kyprianou, C\'ecile Mailler and Bastien Mallein for helpful conversations.

\bibliographystyle{plain}

\end{document}